\newcommand\restr[2]{{
  \left.\kern-\nulldelimiterspace 
  #1 
  \littletaller 
  \right|_{#2} 
  }}
\newcommand{\littletaller}{\mathchoice{\vphantom{\big|}}{}{}{}}
\newcommand{\be}{\begin{equation}}
\newcommand{\ee}{\end{equation}}
\newcommand{\bea}{\begin{eqnarray}}
\newcommand{\eea}{\end{eqnarray}}
\newcommand{\bean}{\begin{eqnarray*}}
\newcommand{\eean}{\end{eqnarray*}}
\newcommand{\brray}{\begin{array}}
\newcommand{\erray}{\end{array}}
\newcommand{\biearray}{\begin{IEEEarray}{rCl}}
\newcommand{\eiearray}{\end{IEEEarray}}
\newtheorem{dfn}{Definition}[section]
\newtheorem{thm}[dfn]{Theorem}
\newtheorem{lmma}[dfn]{Lemma}
\newtheorem{ppsn}[dfn]{Proposition}
\newtheorem{crlre}[dfn]{Corollary}
\newtheorem{xmpl}[dfn]{Example}
\newtheorem{rmrk}[dfn]{Remark}
\newcommand{\bdfn}{\begin{dfn}\rm}
\newcommand{\bthm}{\begin{thm}}
\newcommand{\blmma}{\begin{lmma}}
\newcommand{\bppsn}{\begin{ppsn}}
\newcommand{\bcrlre}{\begin{crlre}}
\newcommand{\bxmpl}{\begin{xmpl}}
\newcommand{\brmrk}{\begin{rmrk}\rm}
\newcommand{\edfn}{\end{dfn}}
\newcommand{\ethm}{\end{thm}}
\newcommand{\elmma}{\end{lmma}}
\newcommand{\eppsn}{\end{ppsn}}
\newcommand{\ecrlre}{\end{crlre}}
\newcommand{\exmpl}{\end{xmpl}}
\newcommand{\ermrk}{\end{rmrk}}
\newcommand{\bbc}{\mathbb{C}}
\newcommand{\tr}{\mathrm{tr}}
\newcommand{\id}{\mbox{id}}
\def \qed { \mbox{}\hfill
$\Box$\vspace{1ex}}
\title{Sections and Chapters}
\begin{document}
	
	
	\author{\sc {Keshab Chandra Bakshi,}\\[2pt]{\sc Debashish Goswami,}\\[2pt]{\sc Biplab Pal}}

	\title{Weak quantum hypergroups from finite index $C^*$-inclusions}

	\maketitle
	
	
\begin{abstract}
We study a finite index inclusion $B \subset A$ of simple unital $C^{*}$-algebras and construct a canonical completely positive coproduct on the second relative commutant $B' \cap A_{1}$, thereby endowing it with a natural coalgebra structure. Motivated by this construction, we introduce the notion of a \emph{weak quantum hypergroup}, a generalization of the quantum hypergroups of Chapovsky and Vainerman [Compact quantum hypergroups, J. Operator Theory 41 (1999), no.~2, 261–289]. We show that every finite index inclusion gives rise to such a weak quantum hypergroup, and that the corresponding weak quantum hypergroup possesses a Haar integral. In the irreducible case, this structure satisfies the axioms of a quantum hypergroup in the sense of Chapovsky and Vainerman, while in the depth $2$ setting our framework yields the associated weak Hopf algebra constructed by Nikshych and Vainerman. These results provide a unified and intrinsically $C^{*}$-algebraic framework for generalized quantum symmetries associated with finite index inclusions.
\end{abstract}

	\bigskip
	
	{\bf AMS Subject Classification No.:} {\large 46}L{\large 37}\,, {\large 47}L{\large 40}\,, {\large 46}L{\large 05}\,, {\large 43}A{\large 30}\,.
	
	{\bf Keywords.} Simple $C^*$-algebra, convolution,  Watatani index, comultiplication, coalgebra, quantum hypergroup, weak quantum hypergroup.
	\bigskip
	\hypersetup{linkcolor=blue}
	
\section{Introduction}
Jones introduced the notion of the index \([M : N]\) for a unital inclusion of type \(II_1\) factors \(N \subset M\) in \cite{J}. This breakthrough initiated an active field of research in operator algebras, with significant applications to various areas of mathematics and mathematical physics. Later, Kosaki extended the definition of index to inclusions of type $III$ factors \cite{K}. Inclusion of simple $C^*$-algebras encompasses both type $II_1$ and type $III$ subfactors. Watatani subsequently developed a very general algebraic approach to index theory in this setting \cite{Watataniindex}, defining an index for inclusions of \(C^*\)-algebras possessing a conditional expectation that admits a quasi-basis, a generalization of the Pimsner–Popa basis \cite{PP}. Inclusions of $C^*$-algebras play a central role in operator algebra theory, serving as a unifying framework for index theory, quantum symmetries, and tensor categorical structures. Finite index inclusions, in particular, encode rich internal symmetry and give rise to tensor categories of bimodules, which have become a fundamental tool in subfactor theory, fusion categories, and low-dimensional topology; see, for example, \cite{BCG,BVG,HP,I,M,PN,R} for recent developments. Understanding how such symmetries can be canonically extracted from an inclusion of simple $C^*$-algebras, without imposing restrictive assumptions such as irreducibility or depth conditions, is a key motivation for the present work. 

For a subfactor \(N \subset M\) with finite index \([M:N] < \infty\), Jones observed that one can construct another type \(II_1\) factor \(M_1\), referred to as the basic construction, satisfying \([M_1:M] = [M:N]\). This procedure can be iterated to form the fundamental Jones tower:
\[
N \subset M \subset M_1 \subset M_2 \subset \cdots \subset M_k \subset \cdots.
\]
The existence of this tower was a key ingredient in the proof of the celebrated index rigidity theorem in \cite{J}. 
Since the inception of subfactor theory, the higher relative commutants \(N' \cap M_k\) and \(M' \cap M_k\) play a crucial role in understanding the subfactors. Ocneanu introduced the notion of a Fourier transform $\mathcal{F}$ from $N' \cap M_1$ onto $M' \cap M_2$, generalizing the classical Fourier transform for finite abelian groups (see also \cite{B,BJ}). Through this transformation, one obtains a novel multiplication on \(N' \cap M_1\), which extends the classical convolution product. Motivated by these developments, the first-named author together with Gupta introduced in \cite{BakshiVedlattice} a convolution product on the higher relative commutants $B' \cap A_k$ and $A' \cap A_k$, arising from a unital inclusion of simple unital $C^*$-algebras $B\subset A$ with a conditional expectation of index-finite type, where
\[
B \subset A \subset A_1 \subset A_2 \subset \cdots \subset A_k \subset \cdots
\]
denotes Watatani’s tower of $C^*$-basic constructions with respect to the unique minimal conditional expectation. In \cite[Lemma~3.20]{BakshiVedlattice}, they further established that this convolution product is associative. Consequently, the second relative commutant acquires a natural associative algebra structure. In the present work, motivated by  \cite{HLPW}, we construct a coalgebra structure on the second relative commutant $B' \cap A_1$ using this convolution product, and we prove that the associated coproduct map is completely positive.
\medskip

\noindent\textbf{Theorem A} (See \Cref{impformainthm6})  
Let \( B \subset A \) be an inclusion of simple unital \( C^* \)-algebras equipped with a conditional expectation of index-finite type. Equip \( B' \cap A_1 \) with the inner product $\langle x, y \rangle := \operatorname{tr}(x^* y).$ Define a linear map
\[
\Delta \colon B' \cap A_1 \longrightarrow (B' \cap A_1) \otimes (B' \cap A_1)
\]
by
\[
\langle \Delta(x),\, y \otimes z \rangle
= \tau^{\frac{1}{2}} \, \langle x,\, y \star z \rangle,
\qquad x,y,z \in B' \cap A_1,
\]
where \( \star \) denotes the convolution product and $\tau := [A:B]_0^{-1}.$ Then the map \( \Delta \) is completely positive.

\medskip

 Hopf algebras and their generalizations play a fundamental role in the theory of subfactors and fusion categories, as they provide algebraic models for quantum symmetries. In the depth 2 setting, finite index subfactors are completely described by finite-dimensional weak Hopf algebras (also called quantum groupoids), as shown by Nikshych and Vainerman in \cite{NV}. In this case, the subfactor can be realized as a crossed product by a weak Hopf algebra, and the associated fusion category of bimodules is equivalent to the representation category of this quantum symmetry object.
 For a depth 2 inclusion of simple unital $C^*$-algebras with a finite index conditional expectation, a similar structural result has also been recently established (see \cite{I,preprint}). Beyond depth 2, fusion categories arising from subfactors no longer admit a single Hopf algebra symmetry. Nevertheless, remnants of Hopf algebraic structure persist in the form of bimodule categories, paragroups, and planar algebras. 
A quantum hypergroup (in the sense of Chapovsky and Vainerman in \cite{CV}) is a unital $C^*$-algebra equipped with a completely positive, coassociative coproduct. The most important examples of such structures arise from double cosets of compact matrix pseudogroups in the sense of Woronowicz (\cite{W}). Chapovsky and Vainerman further showed in \cite{CV} that quantum hypergroups possess rich additional structure. As a first step toward developing a general framework for arbitrary inclusions of simple unital $C^*$-algebras, we introduce the notion of a weak quantum hypergroup by relaxing some of the axioms of a quantum hypergroup. The present work fits naturally into this landscape by introducing weak quantum hypergroups as a further generalization of Hopf and weak Hopf algebra symmetries. Weak quantum hypergroups retain the essential analytic features of quantum symmetries- such as coproducts, Haar integrals, and actions- while replacing multiplicativity by complete positivity. This allows one to associate a canonical symmetry object to arbitrary finite index inclusions of  $C^*$-algebras, extending the reach of Hopf-algebraic methods to settings where genuine Hopf or weak Hopf algebra symmetries are unavailable. More precisely, we prove the following:

\medskip
\noindent\textbf{Theorem B:} (See Theorems \ref{imp for Theorem B}, \ref{forirr}, \ref{normalizedmeasures}, and \ref{Haarintegralexistence}) Let \(B \subset A\) be an inclusion of simple unital \(C^*\)-algebras with a conditional expectation of index-finite type. In this paper, we establish the following results:
\begin{itemize}
  \item We show that the second relative commutant $B' \cap A_1$, associated with the basic construction, carries a canonical \emph{weak quantum hypergroup structure} naturally arising from the inclusion $B \subset A$.
  
  \item We prove that, when the inclusion $B \subset A$ is irreducible, the weak quantum hypergroup structure on $B' \cap A_1$ upgrades to that of a (genuine) quantum hypergroup.
  
  \item We further show that the weak quantum hypergroup $B' \cap A_1$ admits left- and right-invariant measures, and that it also admits a Haar integral.
\end{itemize}

The main contribution of this paper is the introduction of a new symmetry object---the {weak quantum hypergroup}---which arises canonically from a finite index inclusion of simple unital $C^*$-algebras. This concept provides a unified operator-algebraic framework that extends and connects previously known quantum symmetry structures, while remaining valid in complete generality. Even though a weak quantum hypergroup does not encode all higher relative commutants $B' \cap A_n$ in the Jones tower, it nevertheless captures, in a very explicit manner,  the canonical first layer of the quantum symmetry inherent in the inclusion. Moreover, our construction unifies all known special cases: in the irreducible setting the weak quantum hypergroup satisfies the axioms of a quantum hypergroup in the sense of Chapovsky and Vainerman, while in the depth 2 case it collapses to the weak Hopf algebra of Nikshych and Vainerman. 

\section{Preliminaries}\label{preliminaries}
This section collects a number of basic concepts that are employed in the sequel. Our discussion is necessarily brief, and we refer the reader to the literature for more detailed accounts.

\subsection{Inclusions of simple unital $C^*$-algebras}
We begin by recalling Watatani’s $C^*$-index theory for unital inclusions of simple $C^*$-algebras, following \cite{Watataniindex}. We then briefly review the Fourier theory developed in \cite{BakshiVedlattice,BGPS,BGS} for the corresponding relative commutants. Let $B \subset A$ be an inclusion of simple unital $C^*$-algebras equipped with a conditional expectation of finite Watatani index. Consider the associated tower of $C^*$-basic constructions:
\begin{center}
    $B \subset A \subset A_1 \subset A_2 \subset \cdots \subset A_n \subset \cdots$
\end{center}
with unique (dual) minimal conditional expectations $E_n : A_n \to A_{n-1}$ for $n \geq 0$, where we adopt the convention that $A_{-1}:=B$ and $A_0:=A$. For each $n \geq 1$, let $e_n$ denote the Jones projection in $A_n$. We denote $\tau := [A:B]^{-1}_0$.
\begin{lmma}[\cite{BakshiVedlattice}]\label{pushdown} For every \(x_1 \in A_1\) there exists a unique \(x_0 \in A\) such that \(x_1 e_1 = x_0 e_1\). In fact, $x_0 = \tau^{-1} E_1(x_1 e_1).$
\end{lmma}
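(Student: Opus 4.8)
The plan is to prove the existence statement and the explicit formula simultaneously, and then dispatch uniqueness separately. The candidate is already forced by the asserted formula: I set $x_0 := \tau^{-1} E_1(x_1 e_1)$, which lies in $A$ because the dual conditional expectation maps $E_1 \colon A_1 \to A$ and $x_1 e_1 \in A_1$. Everything then reduces to a single \emph{pull-down identity},
\[
E_1(y e_1)\, e_1 = \tau\, y e_1 \qquad \text{for all } y \in A_1 ,
\]
since applying it with $y = x_1$ gives $x_0 e_1 = \tau^{-1} E_1(x_1 e_1)\, e_1 = x_1 e_1$, which is exactly what existence requires.

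To establish the pull-down identity I would exploit the structure of Watatani's basic construction in finite index, namely $A_1 = A e_1 A$, so that it suffices to verify the identity on generators $y = a e_1 b$ with $a,b \in A$ and then extend by linearity. For such a $y$ I first apply the defining relation of the Jones projection, $e_1 b e_1 = E_0(b) e_1$, to rewrite $y e_1 = a e_1 b e_1 = a E_0(b)\, e_1$, noting that $a E_0(b) \in A$. Since $E_1$ is $A$-bimodular and $E_1(e_1) = \tau$ for the dual of the minimal expectation, I obtain $E_1(y e_1) = E_1\big(a E_0(b)\, e_1\big) = \tau\, a E_0(b)$. Multiplying on the right by $e_1$ and running the first step backwards yields $E_1(y e_1)\, e_1 = \tau\, a E_0(b)\, e_1 = \tau\, a e_1 b e_1 = \tau\, y e_1$, which is the desired identity on generators.

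For uniqueness it suffices to show that the map $A \ni a \mapsto a e_1$ is injective: if $a e_1 = 0$, then $A$-bimodularity and $E_1(e_1) = \tau$ give $\tau a = E_1(a e_1) = 0$, hence $a = 0$ because $\tau \neq 0$, and this immediately separates two pushdowns of the same element. The step requiring the most care is securing the two structural inputs that feed the generator computation: that $A_1 = A e_1 A$ (so the linear extension is legitimate, with no closure issues), and that $E_1(e_1)$ is genuinely the scalar $\tau = [A:B]_0^{-1}$ rather than a general central element. The latter is exactly where simplicity of $A$ enters, since its center is $\mathbb{C}\,1$, forcing the central value $E_1(e_1)$ to be scalar, while minimality of the conditional expectation pins it to $\tau$. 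Both facts belong to Watatani's finite-index theory, so no genuine obstacle remains beyond invoking them correctly.
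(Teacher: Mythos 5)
Your proof is correct. The paper itself gives no argument for this lemma (it is quoted from \cite{BakshiVedlattice}), and your route --- reducing everything to the pull-down identity $E_1(ye_1)e_1=\tau\,ye_1$ verified on generators of $A_1=Ae_1A$, then getting uniqueness from injectivity of $a\mapsto ae_1$ via $E_1(ae_1)=\tau a$ --- is exactly the standard argument in Watatani's index-finite setting, including the correct identification of where simplicity and minimality are needed to make $E_1(e_1)$ the scalar $\tau$.
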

\begin{lmma}[\cite{KajiwaraWatatani}]\label{kajiwara watatani}
Let \(B \subset A\) be a unital inclusion of simple \(C^*\)-algebras, and let \(E_0 : A \to B\) denote the minimal conditional expectation. Then, for any \(x \in B' \cap A\) and \(a \in A\), we have $E_0(xa) = E_0(ax).$
\end{lmma}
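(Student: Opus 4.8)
The plan is to exploit two structural facts from Watatani's $C^*$-index theory: that the minimal conditional expectation is \emph{unique}, and that its (scalar) index is invariant under conjugation by unitaries of the relative commutant. First I would reduce the statement to unitaries. Since $B' \cap A$ is a unital $C^*$-algebra, it is the linear span of its unitaries, so by linearity of $E_0$ it suffices to prove $E_0(ua) = E_0(au)$ for every unitary $u \in B' \cap A$ and every $a \in A$.

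Fix such a unitary $u$ and consider $E_u \colon A \to A$ defined by $E_u(a) := E_0(u a u^*)$, that is, $E_u = E_0 \circ \alpha_u$ with $\alpha_u = \operatorname{Ad}(u)$ the inner automorphism of $A$. I would first verify that $E_u$ is again a conditional expectation onto $B$: it is unital and completely positive as a composition of such maps; since $u \in B'$ we have $ubu^* = b$ for $b \in B$, whence $E_u|_B = \operatorname{id}_B$; and the bimodule identity $E_u(b_1 a b_2) = b_1 E_u(a) b_2$ follows by commuting $b_1,b_2$ past $u,u^*$ (again using $u \in B'$) and invoking the bimodule property of $E_0$.

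The heart of the argument is to show $E_u = E_0$. Because $\alpha_u$ is an automorphism of $A$ fixing $B$ pointwise, $E_u = E_0 \circ \alpha_u$ has the same Watatani index as $E_0$: if $\{w_i\}$ is a quasi-basis for $E_0$, then $\{\alpha_u^{-1}(w_i)\}$ is a quasi-basis for $E_u$, and since $A$ is simple its center is trivial, so $\operatorname{Ind}(E_0) = \tau^{-1} 1$ is a scalar and $\operatorname{Ind}(E_u) = u^*\operatorname{Ind}(E_0)u = \operatorname{Ind}(E_0)$. Thus $E_u$ also realizes the minimal index, and by uniqueness of the minimal conditional expectation we conclude $E_u = E_0$; that is, $E_0(uau^*) = E_0(a)$ for all $a \in A$. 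Substituting $a \mapsto cu$ yields $E_0(uc) = E_0(cu)$ for all $c \in A$, and extending by linearity over the unitaries of $B' \cap A$ gives $E_0(xa) = E_0(ax)$ for all $x \in B' \cap A$ and $a \in A$, as desired.

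I expect the only genuine subtlety to be the invariance of the minimal index under conjugation and the appeal to uniqueness of the minimal conditional expectation; once these standard facts are in hand, everything else—checking that $E_u$ is an expectation and performing the final substitution—is formal. Notably, this route avoids any use of a global trace on $A$, which is appropriate since a finite-index inclusion of simple $C^*$-algebras need not admit one.
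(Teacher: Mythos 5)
Your argument is correct. Note first that the paper does not prove this lemma at all: it is imported verbatim from Kajiwara--Watatani, so there is no internal proof to compare against; what you have done is supply a self-contained argument from the standing hypotheses (simplicity and the existence of a conditional expectation of index-finite type, which you do need in order to speak of quasi-bases and of \emph{the} minimal expectation). Each step checks out: the reduction to unitaries of $B'\cap A$ is legitimate since a unital $C^*$-algebra is spanned by its unitaries; $E_u=E_0\circ\mathrm{Ad}(u)$ is indeed a conditional expectation onto $B$ because $u$ commutes with $B$; and $\{u^*w_iu\}$ is a quasi-basis for $E_u$ (the verification uses, as you implicitly do, that $E_0(\cdot)\in B$ commutes with $u$), giving $\operatorname{Ind}(E_u)=u^*\operatorname{Ind}(E_0)u=\operatorname{Ind}(E_0)$ since the index is scalar by simplicity of $A$. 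The appeal to uniqueness of the minimizer of the index (Watatani/Kajiwara--Watatani) then forces $E_u=E_0$, and the substitution $a\mapsto cu$ closes the argument. This conjugation-plus-uniqueness route is the standard way to derive the trace-like property of the minimal expectation on the relative commutant, and it is entirely consistent with the framework the paper works in; the only point worth flagging is that the uniqueness of the minimal conditional expectation is itself a nontrivial theorem, so your proof is only as self-contained as that input.
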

 
 The relative commutants $B' \cap A_n := \{ x \in A_n : xb = bx \text{ for all } b \in B \}$ are finite-dimensional $C^*$-algebras (see \cite{Watataniindex}). The inclusion \( B \subset A \) is said to be of finite depth (cf.\ \cite{JOPT}) if there exists an integer \( n \ge 1 \) such that $(B' \cap A_{n-1})\, e_n\, (B' \cap A_{n-1}) = B' \cap A_n.$ The smallest such integer \( n \) is called the depth of the inclusion.

\medskip

  Using the minimal conditional expectations, one obtains ``Markov-type traces'' on these algebras \cite[Proposition~2.21]{BakshiVedlattice}. Specifically, for $n \geq 0$, the map
\[
\mathrm{tr}_n := (E_0 \circ E_1 \circ \cdots \circ E_n)\big|_{B' \cap A_n} : B' \cap A_n \to \mathbb{C}
\]
defines a faithful tracial state on $B' \cap A_n$.
\begin{ppsn}[\cite{BakshiVedlattice}]\label{m1} 
For all \(n \ge 1\), $\tr_n(x e_n) = \tau \, \tr_{n-1}(x) \text{ for every } x \in B' \cap A_{n-1},$ and $\tr_n|_{B' \cap A_{n-1}} = \tr_{n-1}.$ \end{ppsn}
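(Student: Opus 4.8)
The plan is to reduce both identities to two structural facts about Watatani's tower: the Markov property linking the dual minimal conditional expectation $E_n$ to the Jones projection $e_n$, and the fact that a conditional expectation restricts to the identity on its range. First I would record the Markov property in the form
\[
E_n(x\, e_n) = \tau\, x \qquad (x \in A_{n-1}),
\]
which encodes that $e_n$ implements the basic construction of $A_{n-2} \subset A_{n-1}$ together with the correct normalization of the dual expectation. A priori the constant appearing here is $[A_{n-1}:A_{n-2}]_0^{-1}$; the point is that along the tower built from the \emph{minimal} conditional expectations the minimal index is preserved under basic construction, so that $[A_{n-1}:A_{n-2}]_0 = [A:B]_0$ for every $n\ge 1$ and the constant is uniformly $\tau$. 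This tower-invariance of the minimal index, available from Watatani's index theory, is the one genuinely nontrivial ingredient.

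For the first identity, fix $x \in B' \cap A_{n-1}$. Then $x e_n \in A_n$, and the Markov property gives $E_n(x e_n) = \tau x$. Moreover $x e_n$ lies in $B' \cap A_n$: for $b \in B$ one has $b(x e_n) = (xb)e_n = x(e_n b) = (x e_n)b$, using $bx = xb$ and the commutation of $e_n$ with $B \subseteq A_{n-2}$. Hence, peeling off the outermost expectation,
\[
\tr_n(x e_n) = (E_0 \circ \cdots \circ E_{n-1})\bigl(E_n(x e_n)\bigr) = \tau\,(E_0 \circ \cdots \circ E_{n-1})(x) = \tau\,\tr_{n-1}(x),
\]
where the last equality is the definition $\tr_{n-1} = (E_0 \circ \cdots \circ E_{n-1})|_{B' \cap A_{n-1}}$ applied to $x \in B' \cap A_{n-1}$.

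For the second identity, let $x \in B' \cap A_{n-1}$. Since $E_n : A_n \to A_{n-1}$ is a conditional expectation onto $A_{n-1}$ and $x \in A_{n-1}$, we have $E_n(x) = x$, so
\[
\tr_n(x) = (E_0 \circ \cdots \circ E_{n-1})\bigl(E_n(x)\bigr) = (E_0 \circ \cdots \circ E_{n-1})(x) = \tr_{n-1}(x),
\]
giving $\tr_n|_{B' \cap A_{n-1}} = \tr_{n-1}$.

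I expect no serious obstacle beyond correctly invoking the Markov normalization and the tower-invariance of the minimal index; once these are in hand, both statements follow simply by removing the outermost expectation $E_n$ and reducing to level $n-1$. The only point requiring explicit care is the verification that $x e_n \in B' \cap A_n$, so that $\tr_n(x e_n)$ is literally an evaluation of the trace $\tr_n$ rather than merely of the composition $E_0 \circ \cdots \circ E_n$.
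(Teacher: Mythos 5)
Your argument is correct: the paper states this proposition without proof, citing \cite{BakshiVedlattice}, and your reconstruction via the Markov property $E_n(xe_n)=\tau x$ (with the normalization justified by the invariance of the minimal index along the tower of basic constructions) together with $E_n|_{A_{n-1}}=\mathrm{id}$ is exactly the standard argument behind the cited result. The care you take to check $xe_n\in B'\cap A_n$ and to flag where the uniform constant $\tau$ comes from is appropriate, and I see no gap.
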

 When there is no risk of confusion, we shall drop the subscript and write simply $\tr$ for notational convenience. If $\{\lambda_i : 1 \leq i \leq n\} \subset A$ is a quasi-basis for the minimal conditional expectation $E_0$. Then the $\tr$-preserving conditional expectation from $B' \cap A_n$ onto $A' \cap A_n$ is given by the following:
\[
E_{A' \cap A_n}^{B' \cap A_n}(x)
    = \tau \sum_i \lambda_i x \lambda_i^*
    \qquad \text{for all } x \in B' \cap A_n.
\]

The idea of a Fourier transform for paragroups associated with finite depth subfactors was first proposed by Ocneanu \cite{O}, and has since played a central role in subfactor theory.
A formula for the Fourier transform on the higher relative commutants of an arbitrary extremal subfactor was obtained by Bisch in \cite{B}. Following \cite{BakshiVedlattice} (see also \cite{BGPS,BGS}), we briefly review the Fourier transform and the associated convolution structure on the relative commutants for $C^*$-inclusions, where these tools have found several applications (see, for instance, \cite{BakshiVedlattice,BGP,BGPS,BGS}). For each \(n \ge 0\), the Fourier transform $\mathcal{F}_n : B' \cap A_n \to A' \cap A_{n+1} $ is defined by
\[
\mathcal{F}_n(x)
    = \tau^{-\frac{n+2}{2}}
      E^{B' \cap A_{n+1}}_{A' \cap A_{n+1}}(x v_{n+1}),
    \qquad x \in B' \cap A_n,
\]
where \(v_{n+1} = e_{n+1} \cdots e_1\). The inverse Fourier transform
\(\mathcal{F}^{-1}_n : A' \cap A_{n+1} \to B' \cap A_n\)
is given by
\[
\mathcal{F}^{-1}_n(w)
    = \tau^{-\frac{n+2}{2}} \, E_{n+1}(w v_{n+1}^*),
    \qquad w \in A' \cap A_{n+1}.
\]
This terminology is justified because, for every \(n \ge 0\), $\mathcal{F}_n \circ \mathcal{F}^{-1}_n = \mathrm{id}_{A' \cap A_{n+1}}
\quad \text{and} \quad
\mathcal{F}^{-1}_n \circ \mathcal{F}_n = \mathrm{id}_{B' \cap A_n},$
as proved in \cite[Theorem~3.5]{BakshiVedlattice}. For \(x, y \in B' \cap A_n\), the convolution product is defined by
\begin{equation}
x \star y
    := \mathcal{F}_n^{-1}\big( \mathcal{F}_n(y)\, \mathcal{F}_n(x) \big),
\end{equation}
which is associative by \cite[Lemma~3.20]{BakshiVedlattice}. Furthermore, if
\(x, y \in B' \cap A_1\),
\begin{equation}\label{3Proposition 3.8}
(x \star y)^* = x^* \star y^*,
\end{equation}
as stated in \cite[Proposition~3.8]{BGS}. However, this $*$-compatibility does not generally extend to higher relative commutants \cite[Remark~4.3]{BGPS}. For each \(n \ge 0\), the rotation operator
\(\rho_n^+ : B' \cap A_n \to B' \cap A_n\)
is defined by
\[
\rho_n^+(x)
    = \big(\,\mathcal{F}_n^{-1}(\mathcal{F}_n(x)^*)\,\big)^*,
    \qquad x \in B' \cap A_n.
\] 
Reflection operators, introduced in \cite{BGPS}, provide powerful tools for establishing Fourier-theoretic inequalities on the higher relative commutants and for connecting the Connes-Størmer entropy of the canonical shift with the minimal Watatani index. For every \(n \ge 0\), $r_{2n+1}^+ := (\rho_{2n+1}^+)^{\,n+1},$ defines a map \(r_{2n+1}^+ : B' \cap A_{2n+1} \to B' \cap A_{2n+1}\). For $n=1$, these operators admit the following concrete expression that will be used later. For all \(x \in B' \cap A_1\),
\begin{equation}\label{reflection operator formula}
r_1^+(x)
    = \tau^{-1} \sum_i E_1(e_1 \lambda_i x)\, e_1 \lambda_i^*.
\end{equation} The reflection operators enjoy the following structural properties:

\begin{thm}[\cite{BGPS}]\label{aboutreflectionoperator}
For every \(n \ge 0\), the reflection operator \(r_{2n+1}^+\) is a unital, involutive, \(*\)-preserving anti-homomorphism, and moreover it preserves the trace.
\end{thm}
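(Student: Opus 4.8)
The plan is to deduce the five assertions from the defining relation $\rho_n^+(x)^* = \mathcal{F}_n^{-1}(\mathcal{F}_n(x)^*)$ together with three structural facts already at hand: the inversion identities $\mathcal{F}_n\circ\mathcal{F}_n^{-1}=\mathrm{id}=\mathcal{F}_n^{-1}\circ\mathcal{F}_n$; the anti-multiplicativity $\mathcal{F}_n(x\star y)=\mathcal{F}_n(y)\mathcal{F}_n(x)$ built into the definition of $\star$; and the $*$-compatibility $(x\star y)^*=x^*\star y^*$ on $B'\cap A_1$ from \eqref{3Proposition 3.8}. Writing $J$ for the involution $x\mapsto x^*$, the single rotation factors as $\rho_n^+=J\circ\mathcal{F}_n^{-1}\circ J\circ\mathcal{F}_n$, and since $r_{2n+1}^+=(\rho_{2n+1}^+)^{n+1}$, any property stable under composition need only be checked one click at a time. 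First I would record the elementary anchors from the preliminaries: $E_1(e_1)=\tau$, the completeness relation $\sum_i\lambda_i e_1\lambda_i^*=1$ for the quasi-basis, and the trace identity of \Cref{m1}.

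Unitality, $*$-preservation and trace-preservation I would establish for $\rho_{2n+1}^+$ and then pass to the $(n+1)$-st power. For $*$-preservation, applying $\mathcal{F}_n$ to the defining relation gives $\mathcal{F}_n(\rho_n^+(x)^*)=\mathcal{F}_n(x)^*$, which combined with the inversion identities yields $\rho_n^+(x^*)=\rho_n^+(x)^*$. Trace-preservation follows since $\mathrm{tr}(\rho_n^+(x))=\overline{\mathrm{tr}(\mathcal{F}_n^{-1}(\mathcal{F}_n(x)^*))}$, and as $\mathcal{F}_n,\mathcal{F}_n^{-1}$ are assembled from the $\mathrm{tr}$-preserving conditional expectations and the isometry $v_{n+1}$, this collapses to $\mathrm{tr}(x)$. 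Unitality reduces to computing $\rho_n^+(1)$ from $\mathcal{F}_n(1)$, which comes down to $\sum_i\lambda_i e_1\lambda_i^*=1$. The base case $n=0$, where $r_1^+=\rho_1^+$, can be checked transparently from the explicit formula \eqref{reflection operator formula}: using $E_1(e_1\lambda_i)=\tau\lambda_i$ and the completeness relation one gets $r_1^+(1)=1$ at once, while the $*$- and trace- computations reduce to manipulations of $E_1$, $e_1$ and the $\lambda_i$. This both settles the first layer and pins down the normalizations for the inductive bookkeeping.

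Involutivity is the statement $(r_{2n+1}^+)^2=\mathrm{id}$, that is, the rotational periodicity $(\rho_{2n+1}^+)^{2n+2}=\mathrm{id}$ on $B'\cap A_{2n+1}$: a full rotation is trivial. For $n=0$ this is $(\rho_1^+)^2=\mathrm{id}$, which I would read off directly from \eqref{reflection operator formula}. For general $n$ I would iterate the relation $\mathcal{F}_n(\rho_n^+(x)^*)=\mathcal{F}_n(x)^*$ against the inversion identities, tracking the powers of $\tau$ and the repeated passage through $\mathcal{F}_n$ and $J$, until the composite returns to the identity after $2n+2$ clicks; once this periodicity is in hand, $(r_{2n+1}^+)^2=(\rho_{2n+1}^+)^{2n+2}=\mathrm{id}$.

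The genuine obstacle is the anti-homomorphism property. Its mechanism is order reversal built into the factorization $\rho_n^+=J\mathcal{F}_n^{-1}J\mathcal{F}_n$: the involution $J$ is an anti-automorphism for the $C^*$-product, while $\mathcal{F}_n$ is an anti-isomorphism from $(B'\cap A_n,\star)$ onto $(A'\cap A_{n+1},\cdot)$, so each rotation reverses order while interchanging the two products. At the base level $n=0$ this closes up cleanly: because $J$ is in addition a \emph{homomorphism} for $\star$ on $B'\cap A_1$ by \eqref{3Proposition 3.8}, chaining $J$, $\mathcal{F}_1$, $J$, $\mathcal{F}_1^{-1}$ produces exactly one net order reversal, giving $\rho_1^+(x\star y)=\rho_1^+(y)\star\rho_1^+(x)$, and I expect the corresponding statement for the $C^*$-product to follow from \eqref{reflection operator formula} by a direct computation. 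The difficulty is twofold. First, for $n\geq 1$ a single click is only a fractional rotation, so one must control the cumulative effect of the $n+1$ clicks making up the half-rotation, tracking how the active product and the order reversal alternate until they close on the $C^*$-multiplication with the factors transposed. Second, and essentially, the $*$-compatibility of $\star$ fails on the higher commutants $B'\cap A_{2n+1}$ (cf.\ \cite[Remark~4.3]{BGPS}), so the clean $n=0$ cancellation does not iterate verbatim. I expect the resolution to run through a single covariance identity for $\rho_{2n+1}^+$ relating the two products, verified via the structure underlying \eqref{reflection operator formula}, followed by careful $(n+1)$-fold bookkeeping; this is where the main effort will be concentrated.
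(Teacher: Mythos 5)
This statement is imported verbatim from \cite{BGPS} and the paper offers no proof of it, so there is no internal argument to compare against; judged on its own terms, your proposal has a genuine gap at its core. The strategy ``establish unitality, $*$-preservation and trace-preservation for a single click $\rho_{2n+1}^+$ and then pass to the $(n+1)$-st power'' fails for exactly the properties where passing from one click to the half-rotation is the whole content of the theorem. Your derivation of $*$-preservation is logically empty: from the defining relation you get $\mathcal{F}_n(\rho_n^+(x)^*)=\mathcal{F}_n(x)^*$, but the inversion identities $\mathcal{F}_n\circ\mathcal{F}_n^{-1}=\mathrm{id}=\mathcal{F}_n^{-1}\circ\mathcal{F}_n$ say nothing about how $\mathcal{F}_n$ intertwines the involution, so you cannot conclude $\rho_n^+(x^*)=\rho_n^+(x)^*$. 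Worse, the conclusion itself is false for $n\ge 1$: the single rotation satisfies a relation of the form $\rho(x^*)=(\rho^{-1}(x))^*$, so it is $*$-preserving precisely when it is involutive, which happens on $B'\cap A_1$ (where one click already is the half-rotation $r_1^+$) but not on the higher relative commutants. The same objection applies to trace-preservation and unitality of a single click if you intend to get them ``for free'' from the abstract factorization $\rho_n^+=J\circ\mathcal{F}_n^{-1}\circ J\circ\mathcal{F}_n$; these require the concrete quasi-basis computations (as in \eqref{reflection operator formula} and its higher-level analogues) together with \Cref{m1} and the minimality of the conditional expectation, which is the $C^*$-substitute for extremality/sphericality and is never actually invoked in your argument at the point where it is needed.

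The two substantive assertions --- the periodicity $(\rho_{2n+1}^+)^{2n+2}=\mathrm{id}$ underlying involutivity, and the anti-multiplicativity of $r_{2n+1}^+$ for the \emph{ordinary} $C^*$-product --- are left as acknowledged placeholders. ``Iterating the relation against the inversion identities until the composite returns to the identity after $2n+2$ clicks'' is not a mechanism: nothing in the formal calculus of $J$ and $\mathcal{F}_n$ forces closure after any particular number of clicks, and in the planar-algebra picture this periodicity is a theorem requiring sphericality, not bookkeeping. For the anti-homomorphism property you correctly identify the obstruction (failure of $(x\star y)^*=x^*\star y^*$ beyond $B'\cap A_1$, cf.\ \cite[Remark~4.3]{BGPS}) and correctly note that the statement concerns the composition product rather than $\star$, but the ``single covariance identity'' that is supposed to resolve it is exactly the missing content. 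As it stands, the proposal proves the $n=0$ case (where \eqref{reflection operator formula} and \eqref{3Proposition 3.8} do suffice for direct verification) and defers everything else; you should either carry out the higher-level computations with the explicit formulas from \cite{BGPS}, or simply cite that paper as the present authors do.
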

In the setting of simple unital \(C^*\)-algebras, one also defines the shift operators, as introduced in \cite{BGPS}. For each \(n \ge 0\), $S_n^+ := r_{2n+3}^+ \circ r_{2n+1}^+.$
Each map $S_n^+ : B' \cap A_{2n+1} \rightarrow A^{\prime}_1 \cap A_{2n+3}$ is a unital, trace-preserving \(*\)-isomorphism. By \cite[Corollary 3.26]{BGPS}, the reflection operator $S^+_0$ admit the following explicit form:
\begin{equation}\label{shift formula}
S^+_0(x)=\tau^{-2}\sum_i\lambda_{i}e_1e_2ye_3e_2e_1\lambda^*_{i}, \quad \text{ for } x\in B'\cap A_1.
\end{equation}
\subsection{Quantum hypergroups}
We now recall some preliminaries on quantum hypergroups, as introduced by Chapovsky and Vainerman \cite{CV}. We also refer the reader to \cite{DV, DV2, LV} for further developments and generalizations of quantum hypergroup concepts.
\begin{dfn}[\cite{CV}]\label{def:hypergroup}
A quadruple \( (A, \Delta, \varepsilon, \star) \) is said to define a quantum hypergroup structure on the \( C^* \)-algebra
\( (A, \cdot, 1, *) \) if the following conditions are satisfied:
\begin{itemize}
\item
The system \( (A, \Delta, \varepsilon, \star) \) forms a \( * \)-coalgebra with counit \( \varepsilon \); that is, \( \Delta \colon A \to A \otimes A \)
and \( \varepsilon \colon A \to \mathbb{C} \) are linear maps, and
\( \star \colon A \to A \) is an antilinear map such that
\begin{enumerate}
\item[$(i)$]
$(\Delta \otimes \mathrm{id})\circ \Delta = (\mathrm{id} \otimes \Delta)\circ \Delta,$
\item[$(ii)$]
$(\varepsilon \otimes \mathrm{id})\circ \Delta = (\mathrm{id} \otimes \varepsilon)\circ \Delta = \mathrm{id},$
\item[$(iii)$]
$\Delta \circ \star = \Pi \circ (\star \otimes \star)\circ \Delta,$
\item[$(iv)$]
$\star \circ \star = \mathrm{id},$
\end{enumerate}
where \( \Pi \colon A \otimes A \to A \otimes A \) denotes the flip map.

\item
The comultiplication \( \Delta \colon A \to A \otimes A \) is positive.

\item
The following compatibility conditions are satisfied for all \( a,b \in A \):
\begin{enumerate}
\item[$(i)$]
$(a \cdot b)^\star = a^\star \cdot b^\star,$ 
\item[$(ii)$]
$\Delta \circ * = (* \otimes *)\circ \Delta,$
\item[$(iii)$]
$\varepsilon(a \cdot b) = \varepsilon(a)\varepsilon(b),$
 \item[$(iv)$]
 $\Delta(1) = 1 \otimes 1,$
\item[$(v)$]
$\star \circ * = * \circ \star.$
\end{enumerate}
\end{itemize}
\end{dfn}
\begin{dfn}[\cite{CV}]\label{def:Haar}
Let \( (A, \Delta, \varepsilon, \star) \) be a quantum hypergroup structure on a
\( C^* \)-algebra \( A \).
A state \( \phi\) is called a Haar measure if $\phi \otimes \id)\circ \Delta(a)
= (\id \otimes \phi)\circ \Delta(a)
= \phi(a),$ for all $a \in A.$
\end{dfn}
If \( A \) is a quantum hypergroup whose linear span of positive definite elements is
dense in \( A \), then \( A \) admits a unique Haar measure \( \phi \)
(see \cite[Theorem~2.3]{CV}). It is worth noting that in \cite{CV} the authors
further introduce and study the notion of compact quantum hypergroups, the most important examples of such structures arise from
double cosets of compact matrix pseudogroups. Several notions of hypergroups associated with subfactors have also been studied in the literature; see, for example, \cite{BVG,S}. Bischoff et al.~\cite{BVG} compare their construction with other hypergroup frameworks appearing in the literature, including those developed in \cite{BH,CV,SW}.

\section{Convolution product and coalgebra structure}	

In this section, we consider the inclusion \(B \subset A\) of simple unital \(C^*\)-algebras with a conditional expectation of index-finite type and define a coalgebra structure on the second relative commutant $B' \cap A_1$ using the convolution product. We then show that the associated coproduct map is completely positive. An application of this result will be presented in the next section. Finally, we prove that this coalgebra structure coincides, up to isomorphism, with the one introduced by Nikshych and Vainerman (\cite{NV,preprint}) in the case of depth 2 subfactor. This correspondence reinforces the naturality of our construction and highlights its relevance within the broader theory of generalized quantum symmetries arising from subfactor inclusions.

\medskip
\noindent\textbf{Notation:} Throughout this paper, we work with an inclusion \(B \subset A\) of simple unital \(C^*\)-algebras with a conditional expectation of index-finite type. Let \(\{\lambda_i : 1 \le i \le n\} \subset A\) be a quasi-basis for the minimal conditional expectation \(E_0\).

\begin{dfn}\label{weak hypergroup structure} On the second relative commutant $B'\cap A_1$, we define the maps $\Delta$ and  $\varepsilon$ as follows:
$$\Delta:B'\cap A_1 \rightarrow (B'\cap A_1) \otimes (B'\cap A_1):\quad\big<\Delta(x),y\otimes z\big> = \tau^{\frac{1}{2}}\big<x,y \star z\big>, $$
$$\varepsilon: B'\cap A_1 \rightarrow \bbc: \quad \varepsilon(x)=\tau^{-1}\big<e_1,x\big>,$$
for all  $x,y,z \in B'\cap A_1$. Here the inner product on $B'\cap A_1$ is given by $\big<x,y\big>= \tr(x^*y)$.

\end{dfn}
\begin{ppsn}\label{impformainthm1}
$(B'\cap A_1,\Delta, \varepsilon)$ becomes a coalgebra.
\end{ppsn}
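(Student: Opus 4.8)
The plan is to verify directly the two defining axioms of a coalgebra, namely coassociativity $(\Delta \otimes \mathrm{id})\circ\Delta = (\mathrm{id}\otimes\Delta)\circ\Delta$ and the counit identities $(\varepsilon\otimes\mathrm{id})\circ\Delta = (\mathrm{id}\otimes\varepsilon)\circ\Delta = \mathrm{id}$, by exploiting the duality between the coproduct $\Delta$ and the associative convolution product $\star$ built into \Cref{weak hypergroup structure}. Since $\Delta$ is specified entirely through the nondegenerate inner product $\langle x,y\rangle = \tr(x^*y)$, every coalgebra statement about $\Delta$ can be tested by pairing against elementary tensors and thereby converted into an algebra statement about $\star$. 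Throughout I would keep careful track of two conventions: that $\langle\cdot,\cdot\rangle$ is conjugate-linear in its first slot, and that $\tau = [A:B]_0^{-1}$ is a positive real scalar, so that $\tau^{1/2}$ pulls out of either slot unchanged.

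For coassociativity I would fix arbitrary $y,z,w\in B'\cap A_1$, write $\Delta(x)=\sum_k p_k\otimes q_k$, and pair each side with $y\otimes z\otimes w$. Unfolding the defining relation twice gives
\[
\langle (\Delta\otimes\mathrm{id})\Delta(x),\, y\otimes z\otimes w\rangle
= \tau^{1/2}\,\langle \Delta(x),\, (y\star z)\otimes w\rangle
= \tau\,\langle x,\, (y\star z)\star w\rangle ,
\]
and symmetrically $\langle (\mathrm{id}\otimes\Delta)\Delta(x),\, y\otimes z\otimes w\rangle = \tau\,\langle x,\, y\star(z\star w)\rangle$. The two right-hand sides coincide because $\star$ is associative by \cite[Lemma~3.20]{BakshiVedlattice}, and since $y,z,w$ are arbitrary and the inner product is nondegenerate, coassociativity follows. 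This step is an essentially formal consequence of associativity of $\star$.

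The counit axioms reduce to identifying the unit of the convolution algebra $(B'\cap A_1,\star)$. Because $\mathcal{F}_1$ is a linear isomorphism onto the unital algebra $A'\cap A_2$, the $\star$-unit is $\mathcal{F}_1^{-1}(1)$, and the key computation is that this equals $\tau^{-1/2}e_1$: using the formula for $\mathcal{F}_1^{-1}$ with $v_2^*=e_1e_2$, the $A_1$-bilinearity of $E_2$, and $E_2(e_2)=\tau$, one gets $\mathcal{F}_1^{-1}(1)=\tau^{-3/2}E_2(e_1e_2)=\tau^{-3/2}e_1E_2(e_2)=\tau^{-1/2}e_1$, hence $e_1\star w=\tau^{1/2}w=w\star e_1$ for every $w$. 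Granting this, I would pair $(\varepsilon\otimes\mathrm{id})\Delta(x)$ with an arbitrary $w$: expanding $\varepsilon(p_k)=\tau^{-1}\langle e_1,p_k\rangle$ collapses the sum to
\[
\langle w,\,(\varepsilon\otimes\mathrm{id})\Delta(x)\rangle
= \tau^{-1}\langle e_1\otimes w,\,\Delta(x)\rangle
= \tau^{-1/2}\langle e_1\star w,\, x\rangle
= \langle w,x\rangle ,
\]
so $(\varepsilon\otimes\mathrm{id})\Delta=\mathrm{id}$; the identity $(\mathrm{id}\otimes\varepsilon)\Delta=\mathrm{id}$ follows the same way using $w\star e_1=\tau^{1/2}w$.

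I expect the main obstacle to lie not in any deep difficulty but in the bookkeeping of the counit step: one must correctly identify the $\star$-unit as $\tau^{-1/2}e_1$ through $\mathcal{F}_1^{-1}(1)$, and then track the interplay between the normalizing powers of $\tau$ in the definitions of $\Delta$ and $\varepsilon$ and the conjugate-linearity of the inner product, so that all scalar factors cancel to yield exactly $\mathrm{id}$. Once the unit is pinned down, both the counit and coassociativity verifications become routine dualizations of the known algebraic structure of $\star$.
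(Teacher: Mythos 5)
Your proposal is correct and follows essentially the same route as the paper: both verify coassociativity by pairing against elementary tensors and invoking associativity of $\star$ from \cite[Lemma~3.20]{BakshiVedlattice}, and both reduce the counit axioms to the fact that $e_1\star y=\tau^{1/2}y=y\star e_1$. Your explicit identification of the $\star$-unit as $\mathcal{F}_1^{-1}(1)=\tau^{-1/2}e_1$ is a welcome detail that the paper's proof leaves implicit, but it does not change the argument.
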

\begin{prf}
For any $x,y_1,y_2,y_3 \in B'\cap A_1$, we have the following:
\begin{eqnarray}\nonumber
\big<(\Delta \otimes \mathrm{id})\circ \Delta(x), y_1\otimes y_2 \otimes y_3\big>
&=&\big<\Delta(x_{(1)}) ,y_1\otimes y_2\big>\,\big<x_{(2)},y_3\big>\\\nonumber
&=&\tau^{\frac{1}{2}}\big<x_{(1)} ,y_1 \star y_2\big>\,\big<x_{(2)},y_3\big>\\\nonumber
&=&\tau \big<x,(y_1 \star y_2)\star y_3\big>.
\end{eqnarray}
Similarly, it is easy to verify that, $\big<(\mathrm{id}\otimes \Delta)\circ \Delta(x), y_1\otimes y_2 \otimes y_3\big>=\tau \big<x,y_1\star( y_2\star y_3)\big>.$ By \cite{BakshiVedlattice}[Lemma 3.20], we know that the convolution product is associative. Hence we get $(\Delta \otimes \mathrm{id})\circ \Delta=(\mathrm{id} \otimes \Delta)\circ \Delta.$ Also we have,
\begin{eqnarray}\nonumber
\big<y_1,(\varepsilon\otimes \mathrm{id})\circ \Delta(x)\big>
&=&\varepsilon(x_{(1)})\big< y_1, x_{(2)}\big>\\\nonumber
&=&\tau^{-\frac{1}{2}}\big<e_1 \star y_1, x\big>\\\nonumber
&=&\big<y_1, x\big>.
\end{eqnarray}
Moreover, one readily verifies that $\big<y_1,(\mathrm{id}\otimes \varepsilon)\circ \Delta(x)\big>=\big<y_1, x\big>.$ So we have $(\varepsilon\otimes \mathrm{id})\circ \Delta=(\mathrm{id} \otimes \varepsilon)\circ \Delta=\mathrm{id}.$ Hence $(B'\cap A_1,\Delta, \varepsilon)$ becomes a coalgebra.
\qed
\end{prf}

As a direct consequence of \cite{T}, we obtain the following:
\begin{thm}\label{Takeuchi}
Let \(B \subset A\) be an inclusion of simple unital \(C^*\)-algebras with a conditional expectation of finite Watatani index. Then a Hopf algebra can be constructed from the coalgebra \((B' \cap A_1, \Delta, \varepsilon)\).
\end{thm}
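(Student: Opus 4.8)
The plan is to obtain the statement as an immediate application of Takeuchi's universal construction of a free Hopf algebra from a coalgebra \cite{T}, relying only on the fact that $(B' \cap A_1, \Delta, \varepsilon)$ is a coalgebra over $\mathbb{C}$, which was already established in \Cref{impformainthm1}. First I would record that the underlying object is genuinely a coassociative, counital coalgebra in the purely algebraic sense: since $B' \cap A_1$ is a finite-dimensional $C^*$-algebra, the tensor product $(B' \cap A_1) \otimes (B' \cap A_1)$ appearing in the definition of $\Delta$ is the ordinary algebraic tensor product, so no completion or topological subtlety intervenes. The identities $(\Delta \otimes \mathrm{id}) \circ \Delta = (\mathrm{id} \otimes \Delta) \circ \Delta$ and $(\varepsilon \otimes \mathrm{id}) \circ \Delta = (\mathrm{id} \otimes \varepsilon) \circ \Delta = \mathrm{id}$ proved there are exactly the axioms required to view $(B' \cap A_1, \Delta, \varepsilon)$ as an object of the category $\mathbf{Coalg}_{\mathbb{C}}$ of coalgebras over $\mathbb{C}$.

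Next I would invoke Takeuchi's theorem in the form that the forgetful functor $U \colon \mathbf{Hopf}_{\mathbb{C}} \to \mathbf{Coalg}_{\mathbb{C}}$ from complex Hopf algebras to complex coalgebras admits a left adjoint $L$ \cite{T}. Concretely, $L$ arises in two stages: one first forms the tensor (concatenation) algebra $T(C) = \bigoplus_{k \geq 0} C^{\otimes k}$ of a coalgebra $C$ and extends the coproduct and counit multiplicatively, yielding the free bialgebra on $C$ and a left adjoint to $\mathbf{Bialg}_{\mathbb{C}} \to \mathbf{Coalg}_{\mathbb{C}}$; one then formally adjoins an antipode by the colimit procedure of \cite{T}, producing a left adjoint to $\mathbf{Hopf}_{\mathbb{C}} \to \mathbf{Bialg}_{\mathbb{C}}$, and composing these gives $L$. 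Applying $L$ to $C = B' \cap A_1$ then produces a Hopf algebra $H := L(B' \cap A_1)$ equipped with a universal coalgebra morphism $\iota \colon B' \cap A_1 \to U(H)$, which is the Hopf algebra asserted to exist.

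I expect the only point requiring care—rather than a genuine obstacle—to be the verification that the coalgebra hypotheses of \cite{T} are met over the base field $\mathbb{C}$ and that no additional data (such as a compatible algebra or $\ast$-structure on $B' \cap A_1$) is needed for the construction to proceed. Since Takeuchi's construction is purely algebraic and universal, it applies verbatim once coassociativity and counitality are in hand, so the existence of $H$ is automatic. To forestall misreading, I would also emphasise that the resulting $H$ is in general infinite-dimensional and retains neither the $C^*$-algebraic nor the convolution structure of $B' \cap A_1$: the theorem yields a Hopf algebra \emph{constructed from} the coalgebra datum, not a Hopf-algebra structure on $B' \cap A_1$ itself. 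This caveat clarifies why the subsequent sections still require the finer weak quantum hypergroup formalism rather than resting on \Cref{Takeuchi}.
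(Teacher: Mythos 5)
Your proposal is correct and follows exactly the route the paper intends: the paper states the result as "a direct consequence of \cite{T}" with no further argument, relying on \Cref{impformainthm1} for the coalgebra axioms and then applying Takeuchi's free Hopf algebra construction. Your write-up simply makes explicit (the adjunction, the two-stage construction, the caveat that the resulting Hopf algebra is not $B'\cap A_1$ itself) what the paper leaves implicit.
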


We now aim to show that $\Delta$ is a completely positive map. In the framework of a finite index subfactor, this has been established in \cite{HLPW}, where the proof is given via pictorial arguments. In contrast, in the $C^*$-algebraic framework, the main obstacle is that a comparable pictorial description (as in planar algebras) for the Fourier transform and convolution is still not available. To overcome this difficulty, we instead exploit the relationship between quasi-basis and minimal conditional expectation. We begin by establishing the following preliminary results:
\begin{ppsn}\label{isomorphism}
The \(C^*\)-algebras \((B' \cap A_1) \otimes (B' \cap A_1)\) and \((B' \cap A_1)(A_1' \cap A_3)\) are isomorphic.
\end{ppsn}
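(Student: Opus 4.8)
The plan is to realize both algebras explicitly inside \(A_3\) and to compare them through the multiplication map, using the shift isomorphism already recorded in the preliminaries. First observe that \(B' \cap A_1\) and \(A_1' \cap A_3\) are \emph{commuting} finite-dimensional \(C^*\)-subalgebras of \(A_3\): indeed \(B' \cap A_1 \subseteq A_1\), while every element of \(A_1' \cap A_3\) commutes with all of \(A_1\), in particular with \(B' \cap A_1\). Hence the linear span \((B' \cap A_1)(A_1' \cap A_3)\) is itself a finite-dimensional \(C^*\)-subalgebra of \(A_3\) (it is \(*\)-closed and multiplicatively closed precisely because the two factors commute), and the multiplication map
\[
m \colon (B' \cap A_1) \otimes (A_1' \cap A_3) \longrightarrow A_3,
\qquad x \otimes w \longmapsto xw,
\]
is a well-defined surjective \(*\)-homomorphism onto it. It then remains to prove that \(m\) is injective and to identify \(A_1' \cap A_3\) with \(B' \cap A_1\).

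The injectivity of \(m\) rests on a trace-factorization property, which I regard as the crux of the argument: for \(x \in B' \cap A_1\) and \(w \in A_1' \cap A_3\) one has \(\tr(xw) = \tr(x)\,\tr(w)\). To establish this, set \(\mathcal{E} := E_2 \circ E_3 \colon A_3 \to A_1\), a conditional expectation that is in particular an \(A_1\)-bimodule map. For \(w \in A_1' \cap A_3\), the element \(\mathcal{E}(w)\) lies in \(A_1\), and since \(w\) commutes with \(A_1\) and \(\mathcal{E}\) is an \(A_1\)-bimodule map, one gets \(a\,\mathcal{E}(w) = \mathcal{E}(aw) = \mathcal{E}(wa) = \mathcal{E}(w)\,a\) for all \(a \in A_1\); thus \(\mathcal{E}(w) \in A_1' \cap A_1 = \mathbb{C}1\), using that \(A_1\) is simple. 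Because \(\mathcal{E}\) is trace-preserving (indeed \(\tr_1 \circ \mathcal{E} = E_0 \circ E_1 \circ E_2 \circ E_3 = \tr_3\), and \(\tr_3|_{B'\cap A_1} = \tr_1\) by \Cref{m1}), taking traces pins the scalar as \(\mathcal{E}(w) = \tr(w)\,1\). For \(x \in B' \cap A_1 \subseteq A_1\) the module property then gives \(\mathcal{E}(xw) = x\,\mathcal{E}(w) = \tr(w)\,x\), and applying \(\tr\) yields the claimed factorization.

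With the factorization in hand, injectivity follows by an orthonormality count. Choose orthonormal bases \(\{x_i\}\) of \(B' \cap A_1\) and \(\{w_j\}\) of \(A_1' \cap A_3\) for the respective trace inner products. Since \(x_i^* x_k \in B' \cap A_1\) and \(w_j^* w_l \in A_1' \cap A_3\) commute, the factorization gives
\[
\langle x_i w_j,\, x_k w_l\rangle = \tr\!\big(x_i^* x_k\, w_j^* w_l\big) = \tr(x_i^* x_k)\,\tr(w_j^* w_l) = \delta_{ik}\,\delta_{jl}.
\]
Thus \(\{x_i w_j\}\) is orthonormal, hence linearly independent, in \(B' \cap A_3\), so \(m\) is injective and therefore a \(*\)-isomorphism onto \((B' \cap A_1)(A_1' \cap A_3)\). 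Finally, the shift operator supplies a unital trace-preserving \(*\)-isomorphism \(S_0^+ \colon B' \cap A_1 \to A_1' \cap A_3\), and composing \(\mathrm{id} \otimes S_0^+\) with \(m\) produces the desired \(*\)-isomorphism
\[
(B' \cap A_1) \otimes (B' \cap A_1) \xrightarrow{\ \sim\ } (B' \cap A_1)(A_1' \cap A_3).
\]
The main obstacle is the trace factorization, and within it the identification \(\mathcal{E}(w) \in \mathbb{C}1\), which is exactly where the simplicity of \(A_1\) (triviality of its center) is used.
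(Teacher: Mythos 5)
Your proof is correct and follows essentially the same route as the paper: both arguments rest on the trace factorization $\tr(xw)=\tr(x)\tr(w)$ for $x\in B'\cap A_1$, $w\in A_1'\cap A_3$, obtained because $E_2\circ E_3$ sends $A_1'\cap A_3$ into the scalars, together with the fact that $S_0^+$ is a trace-preserving $*$-isomorphism onto $A_1'\cap A_3$. The paper packages the injectivity as the statement that $T=m\circ(\mathrm{id}\otimes S_0^+)$ is an isometry for the trace inner products, which is exactly your orthonormal-basis computation; your version merely spells out a few steps (commutation of the two subalgebras, why $E_2\circ E_3(w)\in\mathbb{C}1$) that the paper leaves implicit.
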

\begin{prf}
Define \(T : (B' \cap A_1) \otimes (B' \cap A_1) \to (B' \cap A_1)(A_1' \cap A_3)\) by
\[
T(x \otimes y) = x\, S_0^+(y),
\qquad x,y \in B' \cap A_1.
\]
 where \(S_0^+\) is the shift operator on $B'\cap A_1$ as defined in \cite{BGPS}. Then clearly $T$ is an $*$-algebra homomorphism. We now observe that
\begin{eqnarray}\nonumber
\|\sum_{i} x_i \otimes y_i\|^2&=&\big<\sum_{i} x_i \otimes y_i, \sum_{j} x_j \otimes y_j\big>
=\sum_{i,j}\big< x_i \otimes y_i,x_j \otimes y_j\big>=\sum_{i,j}\big< x_i ,x_j\big>\big< y_i ,y_j\big>.
\end{eqnarray}
Moreover, we have
\begin{eqnarray}\nonumber
\|\sum_{i}x_i\,S^+_0(y_i)\|^2&=& \big<\sum_{i} x_i \,S^+_0( y_i), \sum_{j} x_j\,S^+_0 (y_j)\big>\\\nonumber
&=& \sum_{i,j}\big< x_i \,S^+_0( y_i),  x_j\,S^+_0 (y_j)\big>\\\nonumber
&=& \sum_{i,j} \tr( S^+_0( y^*_i)\,x^*_i x_j\,S^+_0 (y_j)\big>\\\nonumber
&=& \sum_{i,j} \tr( S^+_0 (y_j)S^+_0( y^*_i)\,x^*_i x_j).
\end{eqnarray}
The following observation holds.
 $$\tr( S^+_0 (y_j y^*_i)\,x^*_i x_j)=E_0\circ E_1\circ E_2 \circ E_3( S^+_0 (y_j y^*_i)\,x^*_i x_j)=E_0\circ E_1( E_2 \circ E_3( S^+_0 (y_j y^*_i))\,x^*_i x_j).$$
Using \cite[Theorem 3.27]{BGPS}, we know that $S^+_0 (y_j y^*_i) \in A^{\prime}_1\cap A_3$. Hence $E_2 \circ E_3( S^+_0 (y_j y^*_i)) \in \bbc$. Moreover, since $S^+_0$ is trace preserving, we obtain the following:
 \begin{eqnarray}\nonumber
 \|\sum_{i}x_i\,S^+_0(y_i)\|^2
&=& \sum_{i,j} E_0\circ E_1(x^*_i x_j) E_2 \circ E_3( S^+_0 (y_j y^*_i))\\\nonumber
&=& \sum_{i,j} \tr(x^*_i x_j) \tr( S^+_0 (y_j y^*_i))\\\nonumber
&=& \sum_{i,j} \tr(x^*_i x_j) \tr( y_j y^*_i)\\\nonumber
&=&\sum_{i,j}\big< x_i ,x_j\big>\big< y_i ,y_j\big>.
\end{eqnarray}
Hence, \(T\) is an isometry and therefore injective. Now let \(xw \in (B' \cap A_1)(A_1' \cap A_3)\). Since \(S_0^+\) is surjective, we may write \(xw = x\, S_0^+(y)\) for some \(y \in B' \cap A_1\). But then $T(x \otimes y) = x\, S_0^+(y) = xw,$ showing that \(T\) is surjective. This completes the proof.
\qedhere

\end{prf}
\begin{ppsn}\label{main theorem}
For \(z \in B' \cap A_1\), we have
\[
\Delta(z)
=
\tau^{-2}\, T^{-1} \circ E(e_2 z e_3 e_2),
\]
where \(E\) is the trace preserving conditional expectation from
\(B' \cap A_3\) onto \((B' \cap A_1)(A_1' \cap A_3)\).
\end{ppsn}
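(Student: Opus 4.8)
The plan is to promote the $*$-isomorphism $T$ of \Cref{isomorphism} to an $L^2$-isometry and then to pin down $\Delta(z)$ by pairing against the total family $\{T(y\otimes w)=y\,S_0^+(w): y,w\in B'\cap A_1\}$. Since $T$ is bijective, the asserted formula is equivalent to
\[
T(\Delta(z))=\tau^{-2}\,E(e_2 z e_3 e_2),
\]
and both sides lie in $(B'\cap A_1)(A_1'\cap A_3)$: the left by the very definition of $T$, and the right because $e_2 z e_3 e_2\in B'\cap A_3$ while $E$ has range exactly $(B'\cap A_1)(A_1'\cap A_3)$. It therefore suffices to compare the two sides through their trace inner products against every $y\,S_0^+(w)$. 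The first step is to note, from the norm computation carried out in the proof of \Cref{isomorphism}, that $T$ in fact preserves the trace inner product, so that by polarization $\langle T(a),T(b)\rangle=\langle a,b\rangle$ for all $a,b$.

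Next I would evaluate the two pairings. On one side, the defining relation of $\Delta$ together with the isometry of $T$ gives
\[
\langle T(\Delta(z)),\,y\,S_0^+(w)\rangle=\langle \Delta(z),\,y\otimes w\rangle=\tau^{\frac12}\,\tr\!\big(z^*(y\star w)\big).
\]
On the other side, since $y\,S_0^+(w)$ lies in the range of the trace-preserving conditional expectation $E$, the bimodule property $E(d)c=E(dc)$ for $c$ in the range of $E$, together with $\tr\circ E=\tr$ and $E(\cdot)^*=E((\cdot)^*)$, yields
\[
\big\langle \tau^{-2}E(e_2 z e_3 e_2),\,y\,S_0^+(w)\big\rangle=\tau^{-2}\,\tr\!\big((e_2 z e_3 e_2)^*\,y\,S_0^+(w)\big).
\]
Thus the proposition reduces to the single scalar identity
\[
\tau^{-2}\,\tr\!\big((e_2 z e_3 e_2)^*\,y\,S_0^+(w)\big)=\tau^{\frac12}\,\tr\!\big(z^*(y\star w)\big),\qquad y,z,w\in B'\cap A_1.
\]

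The crux, and the step I expect to be the main obstacle, is the verification of this trace identity, since it is exactly where the abstract convolution has to be matched against the concrete word $e_2 z e_3 e_2$. Here I would substitute the explicit shift formula $S_0^+(w)=\tau^{-2}\sum_i\lambda_i e_1 e_2 w e_3 e_2 e_1\lambda_i^*$ into the left-hand side and expand the right-hand convolution through $y\star w=\mathcal{F}_1^{-1}\big(\mathcal{F}_1(w)\,\mathcal{F}_1(y)\big)$, writing $\mathcal{F}_1$ and $\mathcal{F}_1^{-1}$ in terms of $E_2$, the word $v_2=e_2 e_1$, and the conditional expectation onto $A'\cap A_2$. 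Both sides then become quasi-basis sums of traces of long words in $e_1,e_2,e_3$ and the $\lambda_i$, which I would reduce to a common normal form using the Jones relations $e_i e_{i\pm1} e_i=\tau e_i$ and $e_i e_j=e_j e_i$ for $|i-j|\ge2$, the Markov property $\tr(x e_n)=\tau\,\tr(x)$ from \Cref{m1}, the pull-down \Cref{pushdown}, and the quasi-basis identity $\sum_i\lambda_i e_1\lambda_i^*=1$ together with the formula $E^{B'\cap A_n}_{A'\cap A_n}(\cdot)=\tau\sum_i\lambda_i(\cdot)\lambda_i^*$, which turn the sums $\sum_i\lambda_i(\cdot)\lambda_i^*$ into conditional expectations. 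The delicate point is that \Cref{kajiwara watatani} is needed to commute relative-commutant elements past the quasi-basis, and that the two sides reach the same value only after several cancellations of Jones projections; careful tracking of the powers of $\tau$ at each reduction then confirms the identity and hence the proposition.
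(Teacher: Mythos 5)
Your proposal follows essentially the same route as the paper: reduce the claim to the scalar identity $\tau^{-2}\,\tr\big((e_2 z e_3 e_2)^* \, y\, S_0^+(w)\big)=\tau^{\frac12}\,\tr\big(z^*(y\star w)\big)$ by combining the inner-product-preserving property of $T$ with the trace-preserving conditional expectation $E$, and then verify it by substituting the explicit shift formula and the quasi-basis expressions for $\mathcal{F}_1$, $\mathcal{F}_1^{-1}$ and reducing with the Jones relations, the pull-down lemma, and the Markov property (the paper shows both sides equal $\sum_{i}E_0 \circ E_1\big(z^*E_1(x\lambda_{i}e_1)y\lambda^*_{i}\big)$, also invoking the $*$-compatibility of $\star$ from \Cref{3Proposition 3.8}). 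The computation you defer is exactly the one the paper carries out, with all the correct ingredients identified.
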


\begin{prf}
So it is enough to prove that for any $x,y \in B'\cap A_1$,
$$\tau^{-2}\,\big<T^{-1}\circ E(e_2ze_3e_2), x\otimes y \big>=\tau^{\frac{1}{2}}\big<z,    x\star y\big>.$$
A direct computation shows that,
$$\big<\sum_{i}x_i \otimes y_i, \sum_{j} x^{\prime}_j \otimes y^{\prime}_j\big>= \big<\sum_{i}x_i S^+_{0}(y_i) , \sum_{j} x^{\prime}_j S^+_{0}(y^{\prime}_j)\big> $$
Consequently, we obtain the following,
\begin{eqnarray}\nonumber
\tau^{-2}\,\big<T^{-1}\circ E(e_2ze_3e_2), x\otimes y \big>&=&\tau^{-2}\,\big< E(e_2ze_3e_2), x\,S^+_{0}(y) \big>
\end{eqnarray}
Since $x\,S^+_{0}(y)\in{(B'\cap A_1)(A^{\prime}_1\cap A_3)}$ and $E$ is the trace preserving conditional expectation, it follows that:
\begin{eqnarray}\nonumber
\tau^{-2}\,\big<T^{-1}\circ E(e_2ze_3e_2), x\otimes y \big>
&=&\tau^{-2}\,\tr(e_2e_3z^*e_2x\,S^+_{0}(y))\\\nonumber
&=&\tau^{-4}\,\sum_{i}\tr(e_2e_3z^*e_2x\lambda_{i}e_1e_2ye_3e_2e_1\lambda^*_{i})\quad (\text{by \Cref{shift formula})} \\\nonumber
&=&\tau^{-4}\,\sum_{i}\tr\big(e_2\,E_2(z^*e_2x\lambda_{i}e_1e_2y)e_3e_2e_1\lambda^*_{i}\big).
\end{eqnarray}
Noting that $E_2(z^*e_2x\lambda_{i}e_1e_2y)=E_2\big(z^*E_1(x\lambda_{i}e_1)e_2y\big)=\tau z^*E_1(x\lambda_{i}e_1)y,$ it follows that
\begin{eqnarray}\nonumber
\tau^{-2}\big<T^{-1}\circ E(e_2ze_3e_2), x\otimes y \big>
&=&\tau^{-3}\sum_{i}E_0\circ E_1\circ E_2 \circ E_3\big(e_2z^*E_1(x\lambda_{i}e_1)ye_3e_2e_1\lambda^*_{i}\big)\\\nonumber
&=&\tau^{-2}\sum_{i}E_0\circ E_1\circ E_2 \big(e_2z^*E_1(x\lambda_{i}e_1)ye_2e_1\lambda^*_{i}\big)\\\nonumber
&=&\tau^{-2}\sum_{i}E_0\circ E_1\circ E_2 \big(E_1(z^*E_1(x\lambda_{i}e_1)y)e_2e_1\lambda^*_{i}\big)\\\nonumber
&=&\tau^{-1}\sum_{i}E_0\circ E_1 \big(E_1(z^*E_1(x\lambda_{i}e_1)y)e_1\lambda^*_{i}\big)\\\label{LHS}
&=&\sum_{i}E_0 \circ E_1\big(z^*E_1(x\lambda_{i}e_1)y\lambda^*_{i}\big).
\end{eqnarray}
We now compute the following,
\begin{eqnarray}\nonumber
\big<z, x\star y\big>&=&\tr(z^*(x\star y))\\\nonumber
&=& \tr(z^*(x^* \star y^*)^*)  \quad \text{( by \Cref{3Proposition 3.8})}\\\nonumber
&=& \big(E_0\circ E_1\big((x^* \star y^*)z\big)\big)^*.
\end{eqnarray}
From the definition of the convolution product, it follows that
\begin{eqnarray}\nonumber
E_0\circ E_1((x^* * y^*)z)&=& E_0\circ E_1\big(\mathcal{F}^{-1}_1(\mathcal{F}_1(y^*)\mathcal{F}_1(x^*))z\big)\\\nonumber
&=& \tau^{-\frac{3}{2}}E_0\circ E_1\big(E_2(\mathcal{F}_1(y^*)\mathcal{F}_1(x^*)e_1e_2)z\big)\\\label{inproofofcp}
&=& \tau^{-\frac{3}{2}}E_0\circ E_1\circ E_2\big(\mathcal{F}_1(y^*)\mathcal{F}_1(x^*)e_1e_2z\big).
\end{eqnarray}
Using the definition of the Fourier transform, we obtain the following:
\begin{eqnarray}\nonumber
\mathcal{F}_1(y^*)\mathcal{F}_1(x^*)e_1e_2z&=&\tau^{-1}\sum_{i_1,i_2}\lambda_{i_1}y^*e_2e_1\lambda^*_{i_1}\lambda_{i_2}x^*e_2e_1\lambda^*_{i_2}e_1e_2z\\\nonumber
&=&\tau^{-1}\sum_{i_1,i_2}\lambda_{i_1}y^*e_2e_1\lambda^*_{i_1}\lambda_{i_2}x^*e_2E_0(\lambda^*_{i_2})e_1e_2z\\\nonumber
&=&\sum_{i_1}\lambda_{i_1}y^*e_2e_1\lambda^*_{i_1}x^*e_2z\\\nonumber
&=&\sum_{i_1}\lambda_{i_1}y^* E_1(e_1\lambda^*_{i_1}x^*)e_2z.
\end{eqnarray}
It follows from \Cref{inproofofcp} that
\begin{eqnarray}\nonumber
E_0\circ E_1((x^* * y^*)z)
&=& \tau^{-\frac{3}{2}}E_0\circ E_1\circ E_2\big(\sum_{i_1}\lambda_{i_1}y^* E_1(e_1\lambda^*_{i_1}x^*)e_2z\big)\\\nonumber
&=& \tau^{-\frac{1}{2}}\sum_{i_1} E_0\circ E_1\big(\lambda_{i_1}y^* E_1(e_1\lambda^*_{i_1}x^*)z\big).
\end{eqnarray}
So we finally get,
\begin{eqnarray}\label{RHS}
\tau^{\frac{1}{2}}\big<z, x*y\big>&=&\sum_{i}E_0 \circ E_1\big(z^*E_1(x\lambda_{i}e_1)y\lambda^*_{i}\big).
\end{eqnarray}
Combining \Cref{LHS} and \Cref{RHS}, we get the desired identity.
\qed
\end{prf}
\begin{thm}\label{impformainthm6}
$\Delta$ is a $*$-preserving completely positive map.
\end{thm}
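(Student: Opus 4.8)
The plan is to sidestep the defining pairing $\langle\Delta(x),y\otimes z\rangle=\tau^{1/2}\langle x,y\star z\rangle$ entirely and instead exploit the closed formula established in \Cref{main theorem}, namely
\[
\Delta(z)=\tau^{-2}\,T^{-1}\!\circ E\,(e_2 z e_3 e_2),\qquad z\in B'\cap A_1,
\]
where $T$ is the $*$-isomorphism of \Cref{isomorphism} and $E$ is the trace-preserving conditional expectation from $B'\cap A_3$ onto $(B'\cap A_1)(A_1'\cap A_3)$. Writing $V(z):=e_2 z e_3 e_2$, this exhibits $\Delta$ as the composition $\tau^{-2}\,T^{-1}\circ E\circ V$. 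Since $T^{-1}$ is a $*$-isomorphism and $E$ is a conditional expectation, both are automatically completely positive and $*$-preserving, and $\tau^{-2}>0$; hence the entire problem reduces to showing that the single map $V$ is completely positive and $*$-preserving.

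The crux is to recognize that $V$ is completely positive even though it is not presented in the manifestly CP ``sandwich'' form. The key structural input is the commutation relation for Jones projections: because $e_3$ commutes with $A_1$ and $z\in B'\cap A_1\subseteq A_1$, we have $e_3 z=z e_3$. Using in addition $e_3^2=e_3$, I would rewrite
\[
V(z)=e_2 z e_3 e_2=e_2 z e_3^{2} e_2=e_2 e_3\,z\,e_3 e_2=(e_3 e_2)^{*}\,z\,(e_3 e_2).
\]
Thus $V(z)=b^{*} z b$ with $b:=e_3 e_2$, which is the prototypical completely positive map; moreover $e_2,e_3\in B'$ forces $V(B'\cap A_1)\subseteq B'\cap A_3$, so $V$ genuinely maps into the domain of $E$. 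The very same commutation identity yields $V(z)^{*}=e_2 e_3 z^{*} e_2=e_2 z^{*} e_3 e_2=V(z^{*})$, so $V$ is $*$-preserving (this is also automatic, since positive linear maps between $C^*$-algebras preserve adjoints).

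Assembling the pieces, $\Delta=\tau^{-2}\,T^{-1}\circ E\circ V$ is a positive scalar multiple of a composition of completely positive, $*$-preserving maps, and complete positivity is stable under composition (and under restricting the domain to the $C^*$-subalgebra $B'\cap A_1$); this delivers the theorem. I expect the only genuine obstacle to be the rewriting in the second step: the output of \Cref{main theorem} displays $V(z)=e_2 z e_3 e_2$, which is \emph{a priori} neither self-adjoint-looking nor of sandwich form, and it is precisely the commutation $e_3 z=z e_3$ together with the idempotency of $e_3$ that converts it into $(e_3 e_2)^{*} z (e_3 e_2)$. Everything downstream---complete positivity of conditional expectations and $*$-isomorphisms, and stability of complete positivity under composition---is entirely standard.
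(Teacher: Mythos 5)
Your proof is correct and follows essentially the same route as the paper: both start from the formula $\Delta(z)=\tau^{-2}\,T^{-1}\circ E(e_2 z e_3 e_2)$ of \Cref{main theorem} and conclude by writing $\Delta$ as a composition of completely positive maps ($T^{-1}$ a $*$-isomorphism, $E$ a conditional expectation, and the inner map). The only cosmetic difference is that you justify complete positivity of the inner map by rewriting it in the sandwich form $(e_3e_2)^{*}z(e_3e_2)$ using $e_3z=ze_3$, whereas the paper observes that $z\mapsto\tau^{-1}e_2ze_3e_2$ is a $*$-algebra homomorphism; both verifications rest on the same commutation relation for the Jones projections.
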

\begin{prf}
Using \Cref{main theorem}, we have 
\[
\Delta(z)= \tau^{-1}\,T^{-1}\circ E(\tau^{-1}e_2ze_3e_2).
\]
It is easy to verify that the map $z\rightarrow \tau^{-1}e_2ze_3e_2$ is a \(*\)-algebra homomorphism. Hence, \(\Delta\) is \(*\)-preserving. Since any \(*\)-algebra homomorphism between two \(C^*\)-algebras is completely positive, it follows that the map $z\rightarrow \tau^{-1}e_2ze_3e_2$ is completely positive. Moreover, since \(T^{-1}\) is a \(*\)-isomorphism, it is also completely positive. Therefore, as a composition of completely positive maps, \(\Delta\) is completely positive.
\qed
\end{prf}

\bigskip
Now, instead of starting with the inclusion of simple unital \( C^* \)-algebras \( B \subset A \), we consider the inclusion \( A \subset A_1 \). In this situation, we obtain a convolution product on the relative commutant \(A' \cap A_2\). We denote this convolution product by \(\star_1\), corresponding to the inclusion \(A \subset A_1\). Using \Cref{weak hypergroup structure}, the comultiplication and counit on \(A' \cap A_2\) are given by
\[
\big<\Delta_{A' \cap A_{2}}(w), w'\otimes w''\big> =\tau^{\frac{1}{2}} \big<w, w' *_1 w''\big>,
\]
\[
\varepsilon_{A' \cap A_{2}}(w) = \tau^{-1}\big<e_2, w\big>,
\]
for all \(w, w', w'' \in A' \cap A_2\). We now aim to relate this comultiplication and counit to those defined by Nikshych and Vainerman in \cite{NV} on \(A' \cap A_2\) (see also \cite{preprint}). The bilinear form
\[
\langle x, w \rangle_{\mathrm{NV}}
=
d\,\tau^{-2}\,
\operatorname{tr}(x\, e_2 e_1\, w),
\qquad 
x \in B' \cap A_1,\ w \in A' \cap A_2,
\]
where \( d \) denotes the constant number  \( \| {\mathrm{Ind}}_{\text{W}}(E_1|_{A' \cap A_1}) \|_2 \), defines a nondegenerate duality between \(B' \cap A_1\) and \(A' \cap A_2\) (\cite[Proposition 3.2]{preprint}).
\begin{dfn}[\cite{NV}]\label{dfnold}
Using the pairing $\big<,\big>_{\text{NV}}$, the comultiplication  $\Delta^{\text{NV}}_{A'\cap A_2}$ and the counit $\varepsilon^{\text{NV}}_{A'\cap A_2}$ on $A'\cap A_2$ are defined as follows:
$$\Delta^{\text{NV}}_{A'\cap A_2}:A'\cap A_2 \rightarrow (A'\cap A_2) \otimes (A'\cap A_2): \quad  \,\big<x_1x_2,w\big>_{\text{NV}}=\big<x_1,w_{(1)}\big>_{\text{NV}} \,\,\big<x_2,w_{(2)}\big>_{\text{NV}} ,$$
$$\varepsilon^{\text{NV}}_{A'\cap A_2}: A'\cap A_2 \rightarrow \bbc: \quad \varepsilon^{\text{NV}}_{A'\cap A_2}(w)=\big<1,w\big>_{\text{NV}} ,$$
for all $x,x_1,x_2 \in B'\cap A_1$ and $w \in A'\cap A_2$.
\end{dfn}

In order to verify the relation, we begin with the following results.
\begin{lmma}\label{reln between form and ip}
Let $x \in B'\cap A_1$ and $w \in A' \cap A_2$. Then
$$\big<x,w\big>_{\text{NV}}= d\tau^{-1}\big<w^*_{x}, w\big>,$$
for some \(w_x \in A' \cap A_2\) such that  $\mathcal{F}_1^{-1}(w_x) = \tau^{\frac{1}{2}} x.$
\end{lmma}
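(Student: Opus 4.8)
The plan is to produce the element $w_x$ explicitly from the Fourier transform and then reduce the asserted identity to a single module/trace computation for the conditional expectation $E^{B'\cap A_2}_{A'\cap A_2}$. Since $\mathcal{F}_1 \colon B'\cap A_1 \to A'\cap A_2$ is a linear bijection with $\mathcal{F}_1^{-1}\circ \mathcal{F}_1 = \mathrm{id}$, I would simply set $w_x := \tau^{1/2}\,\mathcal{F}_1(x)$; then by linearity $\mathcal{F}_1^{-1}(w_x) = \tau^{1/2}\,x$, which is exactly the stipulated property of $w_x$. Unwinding the definition of the Fourier transform with $v_2 = e_2 e_1$ gives the closed form $w_x = \tau^{1/2}\,\tau^{-3/2}\, E^{B'\cap A_2}_{A'\cap A_2}(x e_2 e_1) = \tau^{-1}\, E^{B'\cap A_2}_{A'\cap A_2}(x e_2 e_1)$.

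Next I would expand the right-hand side of the claim. Since $\langle w_x^*, w\rangle = \tr(w_x w)$, I obtain
\[
d\tau^{-1}\langle w_x^*, w\rangle = d\tau^{-1}\tr(w_x w) = d\tau^{-2}\,\tr\!\big(E^{B'\cap A_2}_{A'\cap A_2}(x e_2 e_1)\, w\big).
\]
Comparing with $\langle x, w\rangle_{\mathrm{NV}} = d\tau^{-2}\,\tr(x e_2 e_1 w)$, the entire lemma reduces to the single identity $\tr\big(E^{B'\cap A_2}_{A'\cap A_2}(x e_2 e_1)\, w\big) = \tr(x e_2 e_1 w)$. This is where the hypotheses on the two arguments enter: because $e_1 \in B'\cap A_1$ and $e_2 \in A'\cap A_2$, the product $x e_2 e_1$ lies in $B'\cap A_2$, the domain of $E^{B'\cap A_2}_{A'\cap A_2}$, while $w \in A'\cap A_2$ lies in its range. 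The bimodule property of the conditional expectation then yields $E^{B'\cap A_2}_{A'\cap A_2}(x e_2 e_1)\,w = E^{B'\cap A_2}_{A'\cap A_2}(x e_2 e_1 w)$, and the trace-invariance of $E^{B'\cap A_2}_{A'\cap A_2}$ gives $\tr\big(E^{B'\cap A_2}_{A'\cap A_2}(x e_2 e_1 w)\big) = \tr(x e_2 e_1 w)$, which closes the argument.

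The routine portions are pure bookkeeping: tracking the powers of $\tau$ through the Fourier-transform normalization, and keeping the adjoint straight so that $\langle w_x^*, w\rangle$ collapses to $\tr(w_x w)$ rather than $\tr(w_x^* w)$. The only genuinely substantive point — and the step I would flag as the main obstacle — is verifying the membership $x e_2 e_1 \in B'\cap A_2$ and $w \in A'\cap A_2$ precisely enough to invoke both the module property and the trace-preservation of $E^{B'\cap A_2}_{A'\cap A_2}$ at once; everything else follows formally from the bijectivity of $\mathcal{F}_1$ and the definition of $\langle\,\cdot\,,\,\cdot\,\rangle_{\mathrm{NV}}$.
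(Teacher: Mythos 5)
Your proof is correct, and it reaches the identity by a slightly different route than the paper. The paper produces $w_x$ implicitly: it invokes the fact that $(B'\cap A_2)e_1=(A'\cap A_2)e_1$ to write $xe_2e_1=w_xe_1$ for some $w_x\in A'\cap A_2$, computes $\tr(w_xe_1w)=\tr(ww_xE^{B'\cap A_2}_{A'\cap A_2}(e_1))=\tau\,\tr(ww_x)$, and then must separately verify $\mathcal{F}_1^{-1}(w_x)=\tau^{-3/2}E_2(w_xe_1e_2)=\tau^{1/2}x$ at the end. You instead define $w_x:=\tau^{1/2}\mathcal{F}_1(x)=\tau^{-1}E^{B'\cap A_2}_{A'\cap A_2}(xe_2e_1)$ outright, so the Fourier-transform property is automatic, and you obtain the pairing identity from $\tr\big(E^{B'\cap A_2}_{A'\cap A_2}(xe_2e_1)\,w\big)=\tr(xe_2e_1w)$ via the $A'\cap A_2$-bimodule property and trace-preservation of the conditional expectation; all of your membership checks ($xe_2e_1\in B'\cap A_2$, $w\in A'\cap A_2$) and the normalization bookkeeping are right. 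The two choices of $w_x$ necessarily agree (apply $E^{B'\cap A_2}_{A'\cap A_2}$ to $xe_2e_1=w_xe_1$ and use $E^{B'\cap A_2}_{A'\cap A_2}(e_1)=\tau$, or just note that $\mathcal{F}_1^{-1}$ is injective). What each approach buys: the paper's version exhibits $w_x$ through the compression $xe_2e_1=w_xe_1$, which is the form of the element that gets reused in the surrounding duality arguments, while yours is self-contained --- it needs no external lemma about $(B'\cap A_2)e_1$ and collapses the two things to be checked into one.
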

\begin{prf}
We have $xe_2\in B'\cap A_2$. By \cite[Lemma 3.1]{preprint}, $xe_2e_1\in (B'\cap A_2)e_1=(A'\cap A_2)e_1$. Thus, there exists $w_x \in A'\cap A_2$ such that $xe_2e_1= w_xe_1$. Therefore, we obtain the following:
\begin{eqnarray}\nonumber
\big<x,w\big>_{\text{NV}}&=&d\tau^{-2}\tr(w_xe_1w)\\\nonumber
&=&d\tau^{-2}(ww_xe_1)\\\nonumber
&=&d\tau^{-2}\tr\big(ww_xE^{B^{\prime}\cap A_2}_{A^{\prime}\cap A_2}(e_1)\big)\\\nonumber
&=&d\tau^{-1}\big<w^*_x, w\big>.
\end{eqnarray}
Furthermore, we have $\mathcal{F}^{-1}_1(w_x)=\tau^{-\frac{3}{2}}E_2(w_xe_1e_2)=\tau^{-\frac{1}{2}}E_2(xe_2)=\tau^{\frac{1}{2}}x.$
\qed
\end{prf}
\begin{thm}\label{main theorem 2}
The two comultiplications on $A'\cap A_2$ are related by the following relation:
$$\Delta_{A' \cap A_{2}}(w)=d\, \Delta^{\text{NV}}_{A'\cap A_2}(w), \quad \text{ for all } w\in A'\cap A_2. $$
\end{thm}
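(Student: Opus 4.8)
The plan is to use the nondegeneracy of the Nikshych--Vainerman pairing and to test the claimed identity against elements of $(B'\cap A_1)\otimes(B'\cap A_1)$. Since $\langle\cdot,\cdot\rangle_{\mathrm{NV}}$ is a nondegenerate duality between $B'\cap A_1$ and $A'\cap A_2$ (\cite[Proposition 3.2]{preprint}), the induced bilinear form on $(B'\cap A_1)^{\otimes 2}\times(A'\cap A_2)^{\otimes 2}$ is again nondegenerate, so it suffices to show that $\Delta_{A'\cap A_2}(w)$ and $d\,\Delta^{\mathrm{NV}}_{A'\cap A_2}(w)$ pair equally with every $x_1\otimes x_2$. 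The right-hand pairing is immediate from \Cref{dfnold}: it collapses to $d\,\langle x_1x_2,w\rangle_{\mathrm{NV}}$, which by \Cref{reln between form and ip} equals $d^2\tau^{-1}\tr(w_{x_1x_2}\,w)$, where $w_x\in A'\cap A_2$ is the element with $\mathcal{F}_1^{-1}(w_x)=\tau^{\frac{1}{2}}x$, that is, $w_x=\tau^{\frac{1}{2}}\mathcal{F}_1(x)$.

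For the left-hand pairing I would substitute $\langle x_i,\cdot\rangle_{\mathrm{NV}}=d\tau^{-1}\langle w_{x_i}^*,\cdot\rangle$ from \Cref{reln between form and ip} into each tensor leg, turning the $\mathrm{NV}$-pairing of $x_1\otimes x_2$ with $\Delta_{A'\cap A_2}(w)$ into $(d\tau^{-1})^2\langle w_{x_1}^*\otimes w_{x_2}^*,\Delta_{A'\cap A_2}(w)\rangle$ with respect to the inner product on $(A'\cap A_2)^{\otimes 2}$. Unfolding the definition of $\Delta_{A'\cap A_2}$ (\Cref{weak hypergroup structure} applied to $A\subset A_1$) together with conjugate-symmetry of the inner product rewrites this as $(d\tau^{-1})^2\tau^{\frac{1}{2}}\langle w_{x_1}^*\star_1 w_{x_2}^*,w\rangle$. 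Since $A'\cap A_2$ is the \emph{second} relative commutant of the inclusion $A\subset A_1$, the $*$-compatibility \eqref{3Proposition 3.8} applies to $\star_1$ (it is exactly this first-level case that is unobstructed), giving $w_{x_1}^*\star_1 w_{x_2}^*=(w_{x_1}\star_1 w_{x_2})^*$ and hence a left-hand pairing of $d^2\tau^{-\frac{3}{2}}\tr\big((w_{x_1}\star_1 w_{x_2})\,w\big)$. Comparing the two pairings for all $w$ and invoking faithfulness of the trace, the scalars $d^2$ and the powers of $\tau$ cancel and the theorem reduces to the single identity
\[
w_{x_1}\star_1 w_{x_2}=\tau^{\frac{1}{2}}\,w_{x_1x_2},\qquad\text{equivalently}\qquad \mathcal{F}_1(x_1x_2)=\mathcal{F}_1(x_1)\star_1\mathcal{F}_1(x_2).
\]

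The heart of the argument, and the step I expect to be the main obstacle, is therefore this multiplicativity statement: the Fourier transform $\mathcal{F}_1\colon B'\cap A_1\to A'\cap A_2$ carries the ordinary product of $B'\cap A_1$ to the convolution $\star_1$ of the \emph{next} inclusion $A\subset A_1$. It is the exact dual of the built-in relation $\mathcal{F}_1(x\star y)=\mathcal{F}_1(y)\mathcal{F}_1(x)$, but now across one level of the tower, so it cannot be read off from the definition of $\star$ alone. One route is to apply the level-one Fourier transform $\mathcal{G}_1\colon A'\cap A_2\to A_1'\cap A_3$ of $A\subset A_1$ to both sides; since $\mathcal{G}_1$ converts $\star_1$ into the opposite product, the claim becomes the assertion that the composite $\mathcal{G}_1\circ\mathcal{F}_1\colon B'\cap A_1\to A_1'\cap A_3$ is an \emph{anti}-homomorphism, a two-fold Fourier transform that in the Ocneanu--Bisch calculus is a rotation and which I would identify, up to the scalars already tracked, with a reflection operator whose anti-multiplicativity is \Cref{aboutreflectionoperator}. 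The more robust route, and the one I would ultimately rely on, is to verify $w_{x_1}\star_1 w_{x_2}=\tau^{\frac{1}{2}}w_{x_1x_2}$ by a direct computation in the tower: expand $\star_1$ through the explicit Fourier formula, use the defining relation $x\,e_2e_1=w_x e_1$ for the elements $w_x$, and collapse the resulting words in the Jones projections by means of the push-down \Cref{pushdown}, the Markov relation \Cref{m1}, and the shift formula \eqref{shift formula}, in the same spirit as \Cref{main theorem}. Once this identity is in place the chain of equalities above closes and yields $\Delta_{A'\cap A_2}=d\,\Delta^{\mathrm{NV}}_{A'\cap A_2}$.
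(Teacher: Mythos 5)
Your proposal is correct and follows essentially the same route as the paper: both sides are tested against $x_1\otimes x_2$ via the nondegenerate NV-pairing, each leg is converted to the trace inner product through \Cref{reln between form and ip}, the definition of $\Delta_{A'\cap A_2}$ and the $*$-compatibility \eqref{3Proposition 3.8} (legitimately applied to $\star_1$) produce $\tr\bigl((w_{x_1}\star_1 w_{x_2})w\bigr)$, and the scalars match the paper's computation exactly. The one step you flag as the ``main obstacle,'' namely $\mathcal{F}_1(x_1x_2)=\mathcal{F}_1(x_1)\star_1\mathcal{F}_1(x_2)$, is precisely what the paper imports from \cite[Lemma~4.11]{BGPS}: that lemma identifies $\star_1$ with the order-reversed dual convolution of the original inclusion, after which $w_{x_1}\star_1 w_{x_2}=\mathcal{F}_1\bigl(\mathcal{F}_1^{-1}(w_{x_1})\mathcal{F}_1^{-1}(w_{x_2})\bigr)=\tau\,\mathcal{F}_1(x_1x_2)$ is immediate from the definitions; your first suggested route (the two-fold Fourier transform being an anti-homomorphism) is exactly the content of that cited result, so the gap is filled by a reference rather than a new computation.
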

\begin{prf}
Let us assume that $\Delta^{\text{NV}}_{A'\cap A_2}(w)= w^{\text{NV}}_{(1)}\otimes w^{\text{NV}}_{(2)}$ and $\Delta_{A' \cap A_{2}}(w)= w_{(1)}\otimes w_{(2)}$. Then we have the following:
\begin{eqnarray}\nonumber
\big<x_1, w_{(1)}\big>_{\text{NV}}\,\big<x_2, w_{(2)}\big>_{\text{NV}}&=&d^2 \tau^{-2}\big<w^*_{x_1}, w_{(1)}\big>\,\big<w^*_{x_2}, w_{(2)}\big>\\\nonumber
&=&d^2 \tau^{-\frac{3}{2}}\big<w^*_{x_1} \star_{1} w^*_{x_2}, w\big>\\\nonumber
&=&d^2 \tau^{-\frac{3}{2}}\big<w^*_{x_2} \star w^*_{x_1}, w\big> \quad \text{( by \cite{BGPS}[Lemma 4.11]) }\\\nonumber
&=&d^2 \tau^{-\frac{3}{2}}\tr((w_{x_2} \star w_{x_1}) w)\quad \text{( by \Cref{3Proposition 3.8}) }\\\nonumber
&=&d^2 \tau^{-\frac{3}{2}}\tr(\mathcal{F}_1(\mathcal{F}^{-1}_1(w_{x_1})\mathcal{F}^{-1}_1(w_{x_2}))w).
\end{eqnarray}
Now using the definition of the Fourier transform and the bimodule property of the trace preseving conditional expectation $E^{B^{\prime}\cap A_2}_{A^{\prime}\cap A_2}$, we get the following:
\begin{eqnarray}\nonumber
\big<x_1, w_{(1)}\big>_{\text{NV}}\,\big<x_2, w_{(2)}\big>_{\text{NV}}
&=&d^2 \tau^{-3}\tr\big(E^{B^{\prime}\cap A_2}_{A^{\prime}\cap A_2}(\mathcal{F}^{-1}_1(w_{x_1})\mathcal{F}^{-1}_1(w_{x_2})e_2e_1w)\big)\\\nonumber
&=&d^2 \tau^{-3}\tr(\mathcal{F}^{-1}_1(w_{x_1})\mathcal{F}^{-1}_1(w_{x_2})e_2e_1w)\\\nonumber
&=&d^2 \tau^{-2}\tr(x_1x_2e_2e_1w)\\\nonumber
&=&d\, \big<x_1x_2, w\big>_{\text{NV}}\\\nonumber
&=&d \,\big<x_1, w^{{\text{NV}}}_{(1)}\big>_{\text{NV}}\,\big<x_2, w^{{\text{NV}}}_{(2)}\big>_{\text{NV}}
\end{eqnarray}
Thus we get $w_{(1)}\otimes w_{(2)}=d \, w^{\text{NV}}_{(1)}\otimes w^{\text{NV}}_{(2)}$. Which implies $\Delta_{A' \cap A_{2}}(w)=d  \,\Delta^{\text{NV}}_{A'\cap A_2}(w)$.
\qed
\end{prf}
\begin{ppsn}
The two counits defined on $A'\cap A_2$ are related as follows:
$$\varepsilon^{\text{NV}}_{A'\cap A_2}(w)=d\,\varepsilon_{A'\cap A_2}(w).$$
\end{ppsn}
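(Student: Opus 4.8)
The plan is to unwind both counits into explicit trace expressions and then reconcile them through a single conditional-expectation identity. First I would expand the Nikshych--Vainerman counit using \Cref{dfnold} together with the pairing $\langle\,\cdot\,,\,\cdot\,\rangle_{\text{NV}}$: since $\varepsilon^{\text{NV}}_{A'\cap A_2}(w)=\langle 1,w\rangle_{\text{NV}}$ and $1\in B'\cap A_1$, this gives $\varepsilon^{\text{NV}}_{A'\cap A_2}(w)=d\,\tau^{-2}\,\tr(e_2 e_1 w)$. On the other side, the counit from \Cref{weak hypergroup structure} (applied to the inclusion $A\subset A_1$) reads $\varepsilon_{A'\cap A_2}(w)=\tau^{-1}\langle e_2,w\rangle=\tau^{-1}\,\tr(e_2 w)$, using $e_2^{*}=e_2$. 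Comparing the two, the entire statement reduces to the single scalar identity $\tr(e_2 e_1 w)=\tau\,\tr(e_2 w)$ for all $w\in A'\cap A_2$.

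To establish this identity I would exploit that both $w$ and $e_2$ lie in $A'\cap A_2$, so that $w e_2\in A'\cap A_2$ as well. Using traciality of $\tr$ I rewrite $\tr(e_2 e_1 w)=\tr\big((w e_2)\,e_1\big)$ and then apply the trace-preserving conditional expectation $E:=E^{B'\cap A_2}_{A'\cap A_2}$. Since $w e_2\in A'\cap A_2$, the $(A'\cap A_2)$-bimodule property yields $E\big((w e_2)e_1\big)=(w e_2)\,E(e_1)$, and trace-preservation gives $\tr\big((w e_2)e_1\big)=\tr\big((w e_2)\,E(e_1)\big)$. The crux is then to evaluate $E(e_1)$.

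The main computational input is the explicit formula $E(x)=\tau\sum_i \lambda_i x \lambda_i^{*}$ combined with the quasi-basis (Pimsner--Popa) relation $\sum_i \lambda_i e_1 \lambda_i^{*}=1$, which together collapse $E(e_1)$ to the scalar $\tau\cdot 1$. Substituting this back gives $\tr\big((w e_2)e_1\big)=\tau\,\tr(w e_2)=\tau\,\tr(e_2 w)$, which is exactly the required identity; chaining the equalities then produces $\varepsilon^{\text{NV}}_{A'\cap A_2}(w)=d\,\tau^{-2}\cdot \tau\,\tr(e_2 w)=d\,\tau^{-1}\tr(e_2 w)=d\,\varepsilon_{A'\cap A_2}(w)$.

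I expect the only genuine subtlety to be the bookkeeping around $E(e_1)$: one must be careful that $e_1$ is regarded as an element of $B'\cap A_2$ (so that the formula for $E$ applies), that $w e_2$ indeed commutes with $B$ (so the bimodule identity $E\big((w e_2)e_1\big)=(w e_2)E(e_1)$ is legitimate), and that $\sum_i \lambda_i e_1 \lambda_i^{*}$ really collapses to the identity. An alternative and essentially equivalent route is to invoke \Cref{reln between form and ip} directly with $x=1$: the associated element $w_1\in A'\cap A_2$ satisfying $e_2 e_1=w_1 e_1$ may be taken to be $e_2$ itself (since one checks $\mathcal{F}_1^{-1}(e_2)=\tau^{\frac{1}{2}}\cdot 1$), and the lemma then yields $\langle 1,w\rangle_{\text{NV}}=d\,\tau^{-1}\langle e_2,w\rangle=d\,\varepsilon_{A'\cap A_2}(w)$ in one step.
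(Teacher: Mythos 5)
Your proof is correct and matches the paper's argument essentially step for step: both reduce the claim to $\tr(e_2e_1w)=\tau\,\tr(e_2w)$ and establish it by inserting the trace-preserving conditional expectation $E^{B'\cap A_2}_{A'\cap A_2}$, using its bimodule property and the collapse $E^{B'\cap A_2}_{A'\cap A_2}(e_1)=\tau\sum_i\lambda_ie_1\lambda_i^*=\tau$. The alternative route you sketch via \Cref{reln between form and ip} with $x=1$ is also valid but is not the one the paper takes.
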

\begin{prf}
From \Cref{dfnold}, we have the following:
\begin{eqnarray}\nonumber
 \varepsilon^{\text{NV}}_{A'\cap A_2}(w)&=&\big<1,w\big>_{\text{NV}}\\\nonumber
 &=& d\tau^{-2}\tr(e_2e_1w)\\\nonumber
 &=& d\tau^{-2}\tr(E^{B^{\prime}\cap A_2}_{A^{\prime}\cap A_2} (e_2e_1w))\\\nonumber
 &=& d\tau^{-1}\tr( e_2w)\\\nonumber
 &=& d\,\varepsilon_{A'\cap A_2}(w)
\end{eqnarray}
 \qed
\end{prf}
\begin{rmrk}\label{impincrlre}
By the nondegenerate duality $\langle \,,\, \rangle_{\mathrm{NV}}$, a coalgebra structure is induced on the second relative commutant \(B' \cap A_1\) as the dual of the coalgebra \(A' \cap A_2\) defined in \Cref{dfnold}. Let \(\Delta^{\mathrm{NV}}_{B' \cap A_1}\) and \(\varepsilon^{\mathrm{NV}}_{B' \cap A_1}\) denote the coproduct and counit arising from this duality. It is straightforward to verify that, $d \, \Delta^{\mathrm{NV}}_{B' \cap A_1} = \Delta$ and $\varepsilon^{\mathrm{NV}}_{B' \cap A_1} = d \, \varepsilon$, where \(\Delta\) and \(\varepsilon\) denote the coproduct and counit defined in \Cref{weak hypergroup structure}. Thus, the coalgebra structures as defined in \Cref{weak hypergroup structure} on \(B' \cap A_1\) arising from the inclusion \(B \subset A\), and on \(A' \cap A_2\) arising from the inclusion \(A \subset A_1\), are dual to each other via the nondegenerate pairing \(\langle \,,\, \rangle_{\mathrm{NV}}\). Moreover, a direct computation shows that these two coalgebra structures on each \(B' \cap A_1\) and \(A' \cap A_2\) are isomorphic.
\end{rmrk}

For a finite index $II_1$ factors \(N \subset M\), Bhattacharjee et al.\ established in \cite{BCG} the existence of a universal Hopf \(*\)-algebra associated to the inclusion, referred to as the quantum Galois group \(Q\!\operatorname{Gal}(N \subset M)\). This leads to the following natural question.

\medskip

\noindent\textbf{Question.} Is there a relationship between the quantum Galois group $Q\!\operatorname{Gal}(N \subset M)$ and the Hopf algebra constructed in \Cref{Takeuchi} ?

\section{Weak quantum Hypergroup structure on the second relative commutant}

The primary objective of this section is to demonstrate that, given an inclusion \(B \subset A\) of simple unital \(C^*\)-algebras equipped with a conditional expectation of index-finite type, the second relative commutant \(B' \cap A_1\) naturally acquires a structure, defined below, that we refer to as a \emph{weak quantum hypergroup}. This notion may be viewed as a relaxation of the quantum hypergroup concept introduced by Chapovsky and Vainerman in \cite{CV}. Furthermore, we show that in the case where the inclusion is irreducible, the resulting structure upgrades to a genuine quantum hypergroup. Finally, we show that the weak quantum hypergroup $B' \cap A_1$ admits both left-invariant and right-invariant normalized measures, as well as a Haar integral.

	\begin{dfn}\label{weak hypergroup}
	Let $(A,\cdot,1,*)$ be a unital $C^*$-algebra. We say that $(A, \Delta, \varepsilon,{\small \texttt{\#}})$ defines a \textbf{weak quantum hypergroup} structure on $(A,\cdot,1,*)$ if the following conditions hold:
	\begin{itemize}
	\item
	$(A, \Delta, \varepsilon,{\small \texttt{\#}} )$ is a ${\small \texttt{\#}}$-coalgebra with a counit $\varepsilon$; that is, $\Delta:A\rightarrow A\otimes A$ and $\varepsilon: A\rightarrow \bbc$ are linear maps, and ${\small \texttt{\#}}: A \rightarrow A $ is an antilinear map satisfying:
	\begin{enumerate}
	\item[$(i)$] $(\Delta \otimes \mathrm{id})\circ \Delta= (\mathrm{id}\otimes \Delta) \circ \Delta,$
	\item[$(ii)$] $(\varepsilon\otimes \mathrm{id}) \circ \Delta= (\mathrm{id}\otimes \varepsilon) \circ \Delta = \mathrm{id},$
	\item[$(iii)$] $\Delta \circ {\small \texttt{\#}} = \Pi \circ ({\small \texttt{\#}} \otimes {\small \texttt{\#}})\circ \Delta$,\quad where $\Pi: A \otimes A \rightarrow A \otimes A$ is the flip map,
	\item[$(iv)$] ${\small \texttt{\#}} \circ {\small \texttt{\#}} =\mathrm{id}$.
	\end{enumerate}
	\item
	The map $\Delta$ is positive.
	\item The following identities hold:
	\begin{enumerate}
	\item[$(i)$]  $(x\cdot y)^{{\small \texttt{\#}}}=x^{{\small \texttt{\#}}} \cdot y^{{\small \texttt{\#}}},$ \quad for all $x,y \in A$, 
	\item [$(ii)$] $\Delta \circ * = (* \otimes *) \circ \Delta$,
	\item [$(iii)$] ${\small \texttt{\#}} \circ *= * \circ {\small \texttt{\#}}$.
	\end{enumerate}
	\item \textbf{Weak multiplicativity of the counit} holds, i.e., for all $x,y, z \in A$, we have
	$$\varepsilon(xyz)= \varepsilon(x y_{(1)}) \varepsilon(y_{(2)} z) = \varepsilon(x y_{(2)}) \varepsilon(y_{(1)} z).$$
	\item \textbf{Weak comultiplicativity of the unit} holds, i.e.,
	$$(\mathrm{id}\otimes \Delta)\circ\Delta(1)=(\Delta(1)\otimes 1)(1\otimes \Delta(1))=(1\otimes \Delta(1))(\Delta(1)\otimes 1).$$
	\end{itemize}
	\end{dfn}
	
As in the case of weak Hopf algebras, we define the following notions for a weak quantum hypergroup.	
	
\begin{dfn}
For any weak quantum hypergroup \(A\), we define the \emph{target} and \emph{source} maps, denoted by \(\varepsilon^t\) and \(\varepsilon^s\), respectively, on \(A\) as follows:
\[
\varepsilon^t(x) = \varepsilon(1_{(1)}x)\,1_{(2)}, \quad \text{for all } x \in A,
\]
\[
\varepsilon^s(x) = 1_{(1)}\,\varepsilon(x1_{(2)}), \quad \text{for all } x \in A.
\]
\end{dfn}

\begin{dfn}
A functional $\phi \in A^*$ is called a left-invariant measure on $A$ if
\[
(\mathrm{id}\otimes \phi) \circ \Delta = (\varepsilon^t \otimes \phi) \circ \Delta.
\]
The measure $\phi$ is said to be normalized if $
(\mathrm{id} \otimes \phi) \circ \Delta(1) = 1.$

Similarly, a functional $\psi \in A^*$ is called a right-invariant measure on $A$ if
\[
(\psi \otimes \mathrm{id}) \circ \Delta = (\psi \otimes \varepsilon^s) \circ \Delta.
\]
The measure $\psi$ is said to be normalized if $
(\psi \otimes \mathrm{id}) \circ \Delta(1) = 1.$
\end{dfn}
\begin{dfn}
A left integral is an element $x \in A$ such that $a x = \varepsilon^t(a) x $ for all $a \in A.$ Similarly, a right integral is an element $y \in A$ such that $y a = y \varepsilon^s(a)$ for all $a \in A.$

An element $h \in A$ is called a Haar integral if it is both a left and right integral and satisfies $\varepsilon^t(h) = \varepsilon^s(h) = 1.$
\end{dfn}

We now show that for an inclusion of simple unital $C^*$-algebras with a conditional expectation of finite Watatani index, the second relative commutant $B' \cap A_1$ carries the structure of a weak quantum hypergroup as defined in \Cref{weak hypergroup}. We note that Bischoff et al. \cite{BVG} introduced a notion of compact hypergroups associated with irreducible local discrete subfactors. Our approach is more general in the sense that the inclusion under consideration need not be irreducible. To establish this structure, we define the following maps on $B' \cap A_1$.

\begin{dfn}\label{weak hypergroup structure2}
Let $B\subset A$ be an inclusion of simple unital $C^*$-algebras with a conditional expectation of index-finite type. On the second relative commutant $B'\cap A_1$, we take $\Delta$, $\varepsilon$ as defined in \Cref{weak hypergroup structure} and we define ${\small \texttt{\#}}$ as follows:
$$ x^{{\small \texttt{\#}}}= r^+_1(x^*), \quad \text { for any }x\in B'\cap A_1.$$
\end{dfn}

\begin{ppsn}\label{impformainthm2}
 For any \(x \in B' \cap A_1\), we have
$$\Delta(x^{{\small \texttt{\#}}})=x^{{\small \texttt{\#}}}_{(2)} \otimes x^{{\small \texttt{\#}}}_{(1)
}.$$
\end{ppsn}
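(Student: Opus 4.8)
The statement to prove is exactly axiom $(iii)$ of the ${\small \texttt{\#}}$-coalgebra written in Sweedler notation, i.e.\ $\Delta\circ{\small \texttt{\#}}=\Pi\circ({\small \texttt{\#}}\otimes{\small \texttt{\#}})\circ\Delta$, where $\Delta(x)=x_{(1)}\otimes x_{(2)}$. Since the inner product on $(B'\cap A_1)\otimes(B'\cap A_1)$ is nondegenerate and the elementary tensors $y\otimes z$ span, it suffices to verify the scalar identity $\langle\Delta(x^{{\small \texttt{\#}}}),\,y\otimes z\rangle=\langle x^{{\small \texttt{\#}}}_{(2)}\otimes x^{{\small \texttt{\#}}}_{(1)},\,y\otimes z\rangle$ for all $x,y,z\in B'\cap A_1$. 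The plan is to evaluate each side using the defining relation $\langle\Delta(u),a\otimes b\rangle=\tau^{\frac12}\langle u,a\star b\rangle$ and to show that both collapse to the single trace expression $\tau^{\frac12}\,\tr\!\big(x\,(r_1^+(z)\star r_1^+(y))\big)$.

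Two auxiliary facts will drive the computation. First, recall $r_1^+=\rho_1^+$ (from $r^+_{2n+1}=(\rho^+_{2n+1})^{n+1}$ with $n=0$); since by \Cref{aboutreflectionoperator} the operator $r_1^+$ is a trace-preserving, $*$-preserving, involutive anti-homomorphism, one obtains $(x^{{\small \texttt{\#}}})^*=(r_1^+(x^*))^*=r_1^+(x)$ together with the trace symmetry
\[
\tr\big(r_1^+(a)\,b\big)=\tr\big(r_1^+(r_1^+(a)b)\big)=\tr\big(r_1^+(b)\,a\big)=\tr\big(a\,r_1^+(b)\big),\qquad a,b\in B'\cap A_1 .
\]
Second, and this is the crucial point, $r_1^+$ \emph{reverses the convolution}: $r_1^+(y\star z)=r_1^+(z)\star r_1^+(y)$. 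I would prove this by applying $*$ and passing to the Fourier picture, where $\rho_1^+(x)=\big(\mathcal{F}_1^{-1}(\mathcal{F}_1(x)^*)\big)^*$ gives $\mathcal{F}_1(r_1^+(x)^*)=\mathcal{F}_1(x)^*$. On one hand $\mathcal{F}_1\big(r_1^+(y\star z)^*\big)=\mathcal{F}_1(y\star z)^*=\big(\mathcal{F}_1(z)\mathcal{F}_1(y)\big)^*=\mathcal{F}_1(y)^*\mathcal{F}_1(z)^*$; on the other hand, using \Cref{3Proposition 3.8} and the definition of $\star$, $\mathcal{F}_1\big((r_1^+(z)\star r_1^+(y))^*\big)=\mathcal{F}_1\big(r_1^+(z)^*\star r_1^+(y)^*\big)=\mathcal{F}_1(r_1^+(y)^*)\mathcal{F}_1(r_1^+(z)^*)=\mathcal{F}_1(y)^*\mathcal{F}_1(z)^*$. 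As $\mathcal{F}_1$ and $*$ are injective, the reversal identity follows.

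With these in hand the two sides match. For the left-hand side,
\[
\langle\Delta(x^{{\small \texttt{\#}}}),y\otimes z\rangle=\tau^{\frac12}\langle x^{{\small \texttt{\#}}},y\star z\rangle=\tau^{\frac12}\tr\big(r_1^+(x)(y\star z)\big)=\tau^{\frac12}\tr\big(x\,r_1^+(y\star z)\big)=\tau^{\frac12}\tr\big(x\,(r_1^+(z)\star r_1^+(y))\big),
\]
using $(x^{{\small \texttt{\#}}})^*=r_1^+(x)$, the trace symmetry, and convolution-reversal. For the right-hand side I first rewrite $\langle a^{{\small \texttt{\#}}},w\rangle=\tr(r_1^+(a)w)=\tr(a\,r_1^+(w))=\langle a^*,r_1^+(w)\rangle$, so that $\langle x^{{\small \texttt{\#}}}_{(2)}\otimes x^{{\small \texttt{\#}}}_{(1)},y\otimes z\rangle=\langle x_{(2)}^*,r_1^+(y)\rangle\,\langle x_{(1)}^*,r_1^+(z)\rangle=\tr\big(x_{(1)}\,r_1^+(z)\big)\,\tr\big(x_{(2)}\,r_1^+(y)\big)$. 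Writing $c=r_1^+(z)$, $d=r_1^+(y)$ and conjugating, $\tr(x_{(1)}c)\tr(x_{(2)}d)=\overline{\langle x_{(1)},c^*\rangle\langle x_{(2)},d^*\rangle}=\overline{\langle\Delta(x),c^*\otimes d^*\rangle}=\tau^{\frac12}\overline{\langle x,c^*\star d^*\rangle}=\tau^{\frac12}\tr\big((c^*\star d^*)^*x\big)$, and since $(c^*\star d^*)^*=c\star d$ by \Cref{3Proposition 3.8}, this equals $\tau^{\frac12}\tr\big(x\,(r_1^+(z)\star r_1^+(y))\big)$. Both sides thus coincide, which proves the proposition. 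The main obstacle is the convolution-reversal lemma, since $r_1^+$ is a priori only an anti-homomorphism for the \emph{ordinary} product and has no obvious relation to $\star$; the resolution is to push everything through $\mathcal{F}_1$, where both $r_1^+$ and $\star$ linearize, while taking care to track the $*$-operation and the order reversal correctly.
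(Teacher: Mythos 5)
Your proof is correct and follows essentially the same route as the paper: both arguments pair each side against $y\otimes z$, exploit that $r_1^+$ is a trace-preserving involutive anti-homomorphism to get $\tr\big(r_1^+(a)b\big)=\tr\big(a\,r_1^+(b)\big)$, and reduce everything to the convolution-reversal identity $r_1^+(y\star z)=r_1^+(z)\star r_1^+(y)$. The only difference is cosmetic: the paper cites this reversal identity from \cite[Proposition 3.9]{BGS}, whereas you (correctly) rederive it through the Fourier transform, and you meet both sides at the common expression $\tau^{\frac12}\tr\big(x\,(r_1^+(z)\star r_1^+(y))\big)$ rather than chaining one side into the other.
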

\begin{prf}
Let \(x, y, z \in B' \cap A_1\). Then we have the following:
\begin{eqnarray}\nonumber
\big<x^{{\small \texttt{\#}}}_{(2)} \otimes x^{{\small \texttt{\#}}}_{(1)
}, y\otimes z \big>
= \tr\big(r^+_1(x_{
(2)})y\big)\,\tr\big(r^+_1(x_{(1)})z\big).
\end{eqnarray}
From \Cref{aboutreflectionoperator}, we know that $r^+_1$ is involutive and trace preserving. Hence we get,
\begin{eqnarray}\nonumber
\big<x^{{\small \texttt{\#}}}_{(2)} \otimes x^{{\small \texttt{\#}}}_{(1)
}, y\otimes z \big>&=& = \tr\big(x_{
(2)}r^+_1( y)\big)\,\tr\big(x_{(1)} r^+_1(z)\big)\\\nonumber
&=& \big<x^*_{(2)} ,r^+_1(y) \big>\big< x^{*}_{(1)
}, r^+_1(z) \big>\\\nonumber
&=&\tau^{\frac{1}{2}}\big<x^*,r^+_1(z) \star r^+_1(y)\big>.
\end{eqnarray}
From \cite[Proposition 3.9]{BGS}, we know that $r^+_1(z) \star r^+_1(y)= r^+_1(y \star z)$. So we get the following:
\begin{eqnarray}\nonumber
\big<x^{{\small \texttt{\#}}}_{(2)} \otimes x^{{\small \texttt{\#}}}_{(1)
}, y\otimes z \big>&=&\tau^{\frac{1}{2}}\big<x^*,r^+_1(y \star z)\big>\\\nonumber
&=&\tau^{\frac{1}{2}}\tr\big(r^+_1(x) (y\star z)\big)\\\nonumber
&=& \tau^{\frac{1}{2}}\big<x^{{\small \texttt{\#}}} , y\star z \big>\\\nonumber
&=&\big<\Delta(x^{{\small \texttt{\#}}}),y\otimes z\big>.
\end{eqnarray}
Hence we are done. 
\qed
\end{prf}

\begin{ppsn}\label{impformainthm3}
We have the followings:
\begin{enumerate}
\item[$(i)$] ${\small \texttt{\#}}: B'\cap A_1 \rightarrow B'\cap A_1$ is antilinear.
\item[$(ii)$] For any $x \in B'\cap A_1$, $(x^{{\small \texttt{\#}}})^{{\small \texttt{\#}}}=x.$
\item[$(iii)$] For any $x,y \in B'\cap A_1$, $(x\cdot y)^{{\small \texttt{\#}}}=x^{{\small \texttt{\#}}} \cdot y^{{\small \texttt{\#}}}.$
\item[$(iv)$] For any $x \in B'\cap A_1$, $(x^*)^{{\small \texttt{\#}}}=(x^{{\small \texttt{\#}}})^*.$
\end{enumerate}
\end{ppsn}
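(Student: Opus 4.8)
\noindent The plan is to derive all four assertions directly from the structural properties of the reflection operator $r_1^+$ collected in \Cref{aboutreflectionoperator}: it is a \emph{linear}, \emph{involutive}, \emph{$*$-preserving} \emph{anti-homomorphism}, and it preserves the trace. Recall from \Cref{weak hypergroup structure2} that ${\small \texttt{\#}}$ is the composite $x^{{\small \texttt{\#}}} = r_1^+(x^*)$, so each claim will follow by unwinding this definition and inserting the appropriate property of $r_1^+$, with no further input needed.

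For $(i)$, I would observe that since $r_1^+$ is a $*$-preserving anti-homomorphism it is in particular complex-linear, whereas the adjoint $*$ is antilinear; hence the composite ${\small \texttt{\#}} = r_1^+ \circ *$ is antilinear. For $(iv)$, both sides reduce to $r_1^+(x)$: indeed $(x^*)^{{\small \texttt{\#}}} = r_1^+((x^*)^*) = r_1^+(x)$, while $(x^{{\small \texttt{\#}}})^* = (r_1^+(x^*))^* = r_1^+((x^*)^*) = r_1^+(x)$, using that $r_1^+$ commutes with $*$. For $(ii)$, I would compute $(x^{{\small \texttt{\#}}})^{{\small \texttt{\#}}} = r_1^+\big((r_1^+(x^*))^*\big) = r_1^+(r_1^+(x))$, again by $*$-preservation, and then invoke involutivity of $r_1^+$ to conclude that this equals $x$. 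Finally, for $(iii)$ I would write $(x\cdot y)^{{\small \texttt{\#}}} = r_1^+(y^* x^*)$ and apply the anti-homomorphism property $r_1^+(y^* x^*) = r_1^+(x^*)\, r_1^+(y^*) = x^{{\small \texttt{\#}}} \cdot y^{{\small \texttt{\#}}}$.

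Since all the substantive content is already contained in \Cref{aboutreflectionoperator}, there is no genuine analytic obstacle here; the only point requiring care is the bookkeeping of how the order-reversal built into the anti-homomorphism interacts with the adjoint. In $(iii)$ the two order-reversals---one coming from $*$ applied to $x\cdot y$, the other from $r_1^+$ being an anti-homomorphism---compose to restore the original order, which is precisely what renders ${\small \texttt{\#}}$ multiplicative rather than anti-multiplicative. This is the sole step where one must be attentive not to mis-order the factors.
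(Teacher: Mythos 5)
Your proof is correct and follows exactly the route the paper takes: the paper simply notes that since $r_1^+$ is a linear, $*$-preserving antihomomorphism (and involutive, from \Cref{aboutreflectionoperator}), all four statements follow immediately, and your writeup just makes that deduction explicit, including the correct handling of the double order-reversal in $(iii)$.
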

\begin{prf}
Since \( r_1^+ \) is a linear and \(*\)-preserving antihomomorphism, the preceding statements follow immediately.
\qed
\end{prf}

We now proceed to establish the weak multiplicativity of the counit. Before entering into the main part of the argument, we record a number of auxiliary observations that will be essential for our proof.
\begin{lmma}\label{impfor e(xyz)}
For any $x,y,z \in B'\cap A_1$, we have the followings:
\begin{enumerate}
\item[$(i)$] $\big<z \,, \,(xe_1)\star y\big>=\tau^{-\frac{1}{2}}\tr\big(E_1(xe_1)yz^*\big).$
\item[$(ii)$] $\varepsilon^t(x)= \tau^{-1}E_1(xe_1).$
\item[$(iii)$] $\varepsilon(x_{(1)}y)x_{(2)}=x\varepsilon^t(y).$
\end{enumerate}
\end{lmma}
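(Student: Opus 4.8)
The three parts are best proved in order, with (i) doing the real work and (ii), (iii) reduced to it by a uniform ``test against the trace'' device.

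For (i), the plan is to substitute the first argument $xe_1$ into the master identity \eqref{RHS}, which reads $\tau^{1/2}\langle z, a\star y\rangle=\sum_i E_0\circ E_1\big(z^*E_1(a\lambda_i e_1)y\lambda_i^*\big)$. This is legitimate because $xe_1\in B'\cap A_1$ (the Jones projection $e_1$ commutes with $B$). The key simplification is of $E_1(xe_1\lambda_i e_1)$: using the Jones relation $e_1\lambda_i e_1=E_0(\lambda_i)e_1$ together with the fact that $E_0(\lambda_i)\in B$ commutes with $e_1$, one has $xe_1\lambda_i e_1=xe_1\,E_0(\lambda_i)$, so $A$-bimodularity of $E_1$ gives $E_1(xe_1\lambda_i e_1)=E_1(xe_1)E_0(\lambda_i)$. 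Substituting, moving $E_0(\lambda_i)\in B$ past $y\in B'$, and collapsing the sum with the quasi-basis identity $\sum_i E_0(\lambda_i)\lambda_i^*=1$ leaves $\tau^{1/2}\langle z,(xe_1)\star y\rangle=E_0\circ E_1\big(z^*E_1(xe_1)y\big)=\tr\big(z^*E_1(xe_1)y\big)$, which is the claim after cycling the trace. Here one uses the short check that $E_1(xe_1)\in B'\cap A$, so that $z^*E_1(xe_1)y\in B'\cap A_1$ and $\tr$ returns a scalar. For later use I record the equivalent elementwise form $(xe_1)\star y=\tau^{-1/2}E_1(xe_1)y$.

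For (ii) I would test both sides against the faithful family $z\mapsto\tr(z\,\cdot\,)$, $z\in B'\cap A_1$. Writing $\Delta(1)=\sum_k u_k\otimes v_k$ and $\varepsilon(a)=\tau^{-1}\tr(e_1 a)$, the left side becomes $\tr\big(z\,\varepsilon^t(x)\big)=\tau^{-1}\sum_k\tr(e_1 u_k x)\tr(z v_k)$. The one subtlety is that the defining pairing $\langle\Delta(1),a\otimes b\rangle=\tau^{1/2}\tr(a\star b)$ produces the \emph{adjointed} coefficients $\tr(u_k^* a)\tr(v_k^* b)$, whereas $u_k,v_k$ occur here unconjugated; this is fixed by invoking the $*$-preservation of $\Delta$ (\Cref{impformainthm6}), which gives $\Delta(1)=\sum_k u_k^*\otimes v_k^*$ and hence the unconjugated identity $\sum_k\tr(u_k a)\tr(v_k b)=\tau^{1/2}\tr(a\star b)$. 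Applying this with $a=xe_1$, $b=z$, and then (i), collapses the left side to $\tau^{-1}\tr\big(E_1(xe_1)z\big)$, which equals $\tr\big(z\,\tau^{-1}E_1(xe_1)\big)$ by traciality. Faithfulness of $\tr$ then yields $\varepsilon^t(x)=\tau^{-1}E_1(xe_1)$.

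Part (iii) follows the same template with $\Delta(x)=\sum x_{(1)}\otimes x_{(2)}$ in place of $\Delta(1)$: testing $\sum\varepsilon(x_{(1)}y)x_{(2)}$ against $\tr(z\,\cdot\,)$ gives $\tau^{-1}\sum\tr(e_1 x_{(1)}y)\tr(z x_{(2)})$, and using $\Delta(x^*)=\sum x_{(1)}^*\otimes x_{(2)}^*$ in the form $\sum\tr(x_{(1)}a)\tr(x_{(2)}b)=\tau^{1/2}\tr\big(x(a\star b)\big)$ with $a=ye_1$, $b=z$, followed by (i), reduces the left side to $\tau^{-1}\tr\big(xE_1(ye_1)z\big)$; by (ii) the right side $x\,\varepsilon^t(y)$ tests to $\tau^{-1}\tr\big(xE_1(ye_1)z\big)$ as well, and faithfulness finishes. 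The main obstacle throughout is precisely this bookkeeping forced by the antilinearity of $\langle\,,\,\rangle$ in its first slot: every computation must be routed through the self-adjoint representation of $\Delta(1)$ (resp. $\Delta(x)$) provided by $*$-preservation, and one must verify at each stage that the paired elements genuinely lie in $B'\cap A_1$ so that $\tr$ is scalar-valued and the faithful-trace argument is available.
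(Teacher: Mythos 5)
Your proposal is correct, and for parts $(ii)$ and $(iii)$ it is essentially the paper's own argument: the paper likewise tests both sides against an arbitrary element of $B'\cap A_1$, passes from $1_{(1)}\otimes 1_{(2)}$ (resp.\ $x_{(1)}\otimes x_{(2)}$) to the starred legs via the $*$-preservation of $\Delta$ established in \Cref{impformainthm6}, folds the two traces into a single pairing $\tau^{\frac12}\langle\,\cdot\,,(xe_1)\star y^*\rangle$, and then invokes item $(i)$; your explicit bookkeeping of the antilinearity in the first slot is exactly the step the paper performs implicitly in the line $\tau^{-1}\langle e_1,1_{(1)}x\rangle\langle y,1_{(2)}\rangle=\tau^{-1}\langle 1^*_{(1)},xe_1\rangle\langle 1^*_{(2)},y^*\rangle$. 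Where you genuinely diverge is part $(i)$: the paper recomputes $\langle z,(xe_1)\star y\rangle$ from scratch, first applying \Cref{3Proposition 3.8} and then unwinding the Fourier transform and the quasi-basis, whereas you specialize the already-established identity \eqref{RHS} to the first argument $xe_1$ and simplify with the Jones relation $e_1\lambda_i e_1=E_0(\lambda_i)e_1$, $A$-bimodularity of $E_1$, and the quasi-basis identity $\sum_i E_0(\lambda_i)\lambda_i^*=1$. This is a legitimate and somewhat more economical route --- \eqref{RHS} is stated for arbitrary elements of $B'\cap A_1$ and $xe_1$ does lie in $B'\cap A_1$, and your observation that $E_1(xe_1)\in B'\cap A$ (so that $E_0\circ E_1$ applied to $z^*E_1(xe_1)y$ is indeed $\tr$) is the right point to check --- though it buys relatively little, since \eqref{RHS} was itself obtained by the same kind of quasi-basis computation the paper repeats here. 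Your recorded elementwise form $(xe_1)\star y=\tau^{-1/2}E_1(xe_1)y$ is also correct by faithfulness of $\tr$ and is consistent with how the paper later uses item $(i)$.
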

\begin{prf}
\begin{enumerate}[leftmargin=*]
\item[$(i)$] Using \Cref{3Proposition 3.8}, we have the following:
\begin{eqnarray}\label{impepsilont}
\big<z \,, \,(xe_1)\star y\big>&=& \overline{\tr(z\,((e_1x^*)\star y^*))}.
\end{eqnarray}
From the definition of the convolution product, it follows that:
\begin{eqnarray}\nonumber
\tr(z\,((e_1x^*)\star y^*))&=& \tau^{-\frac{3}{2}}\tr\big(z E_2(\mathcal{F}(y^*)\mathcal{F}(e_1x^*)e_1e_2)\big)\\\nonumber
&=&\tau^{-\frac{3}{2}}\tr\big(z\,\mathcal{F}(y^*)\mathcal{F}(e_1x^*)e_1e_2\big)\\\nonumber
&=& \tau^{-\frac{5}{2}}\sum_{i_1, i_2}\tr(\lambda_{i_1} e_1x^*e_2e_1\lambda^*_{i_1}e_1e_2z \lambda_{i_2} y^*e_2e_1\lambda^*_{i_2})\\\nonumber
&=& \tau^{-\frac{3}{2}}\sum_{i_1}\tr( e_1x^*e_2z \lambda_{i_2} y^*e_2e_1\lambda^*_{i_2})\\\nonumber
&=& \tau^{-\frac{3}{2}}\sum_{i_1}\tr( x^*e_2z \lambda_{i_2} y^*e_2E_0(\lambda^*_{i_2})e_1)\\\nonumber
&=& \tau^{-\frac{3}{2}}\tr( x^*e_2z y^*e_2e_1).
\end{eqnarray}
Combining the above with \Cref{impepsilont}, we arrive at
\begin{eqnarray}\nonumber
\big<z \,, \,(xe_1)\star y\big>&=& \tau^{-\frac{3}{2}}\tr(e_1e_2yz^*e_2x)\\\nonumber
&=&\tau^{-\frac{3}{2}} E_0\circ E_1 \circ E_2(e_1E_1(yz^*)e_2x)\\\nonumber
&=&\tau^{-\frac{1}{2}} E_0\circ E_1(xe_1E_1(yz^*))\\\nonumber
&=&\tau^{-\frac{1}{2}} E_0(E_1(xe_1)E_1(yz^*))\\\nonumber
&=& \tau^{-\frac{1}{2}}\tr\big(E_1(xe_1)yz^*\big).
\end{eqnarray}
\item[$(ii)$] It suffices to show that, for any $y\in B'\cap A_1$
\begin{eqnarray}\nonumber
\big< y,\varepsilon(1_{(1)}x)\,1_{(2)}\big>= \tau^{-1}\big<y,  E_1(xe_1)\big>. 
\end{eqnarray}
Now using \Cref{weak hypergroup structure2} and above item $(i)$, we get the following:
\begin{eqnarray}\nonumber
\big<y,  \varepsilon(1_{(1)}x)\,1_{(2)}\big>&=& \tau^{-1}\big<e_1,1_{(1)}x\big>\,\big<y, 1_{(2)}\big>\\\nonumber
&=&\tau^{-1}\big<1^*_{(1)}, xe_1\big>\,\big<1^*_{(2)},y^*\big>\\\nonumber
&=&\tau^{-\frac{1}{2}}\big<1, (xe_1) \star y^*\big>\\\nonumber
&=&\tau^{-1}\,\tr\big(E_1(xe_1)y^*\big)\\\nonumber
&=&\tau^{-1}\big<y,  E_1(xe_1)\big>.
\end{eqnarray}
Hence we are done.
\item[$(iii)$] For any $z \in B'\cap A_1$, we have the following:
\begin{eqnarray}\nonumber
\big<z,\varepsilon(x_{(1)}y)x_{(2)}\big>&=& \tau^{-1}\big<e_1,x_{(1)}y\big>\,\big<z,x_{(2)}\big>\\\nonumber
&=& \tau^{-1}\big<x^*_{(1)},ye_1\big>\,\big<x^*_{(2)},z^*\big>\\\nonumber
&=& \tau^{-\frac{1}{2}}\big<x^*,(ye_1)\star z^*\big>\\\nonumber
&=&\tau^{-1}\tr\big(E_1(ye_1)z^*x\big)\\\nonumber
&=& \big<z, x\varepsilon^t(y)\big>.
\end{eqnarray}
So the proof is complete.
\qed
\end{enumerate}
\end{prf}
\begin{ppsn}\label{impformainthm4}
For any $x,y, z \in B'\cap A_1$, we have
	$$\varepsilon(xyz)= \varepsilon(x y_{(1)}) \varepsilon(y_{(2)} z) = \varepsilon(x y_{(2)}) \varepsilon(y_{(1)} z).$$
\end{ppsn}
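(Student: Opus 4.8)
The plan is to reduce the identity to trace computations involving the Jones projection $e_1$ and then collapse a single convolution using \Cref{impfor e(xyz)}$(i)$. Since by \Cref{weak hypergroup structure} we have $\varepsilon(w)=\tau^{-1}\langle e_1,w\rangle=\tau^{-1}\tr(e_1 w)$, the left-hand side is just $\varepsilon(xyz)=\tau^{-1}\tr(e_1xyz)$, so all the work lies in the product of two counits.

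First I would represent the two one-sided functionals $w\mapsto\varepsilon(xw)$ and $w\mapsto\varepsilon(wz)$ as inner products against fixed elements of $B'\cap A_1$. Writing $\varepsilon(xw)=\tau^{-1}\tr(e_1xw)=\langle a_x,w\rangle$ and $\varepsilon(wz)=\tau^{-1}\tr(e_1wz)=\tau^{-1}\tr(ze_1w)=\langle b_z,w\rangle$, one reads off $a_x=\tau^{-1}x^*e_1$ and $b_z=\tau^{-1}e_1z^*$, both in $B'\cap A_1$ since $e_1,x,z\in B'\cap A_1$. Interpreting the Sweedler sum as a pairing on the tensor product and using the defining relation for $\Delta$ from \Cref{weak hypergroup structure} together with the conjugate symmetry of $\langle\cdot,\cdot\rangle$ (and $\tau>0$), I obtain
$$\varepsilon(xy_{(1)})\,\varepsilon(y_{(2)}z)=\langle a_x\otimes b_z,\Delta(y)\rangle=\tau^{\frac12}\langle a_x\star b_z,\,y\rangle.$$

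Next I would evaluate $\langle a_x\star b_z,y\rangle$. Passing to the conjugate, $a_x\star b_z=\tau^{-2}(x^*e_1)\star(e_1z^*)$ has its first factor in the form $(\cdot)\,e_1$ required by \Cref{impfor e(xyz)}$(i)$, so that result (with the substitutions $x\mapsto x^*$, $y\mapsto e_1z^*$) turns the convolution into the single trace $\tau^{-1/2}\tr\!\big(E_1(x^*e_1)\,e_1z^*\,y^*\big)$. The crucial simplification is the pushdown identity $E_1(x^*e_1)\,e_1=\tau\,x^*e_1$, which follows from \Cref{pushdown}: if $x^*e_1=x_0e_1$ then $E_1(x^*e_1)=\tau x_0$, whence $E_1(x^*e_1)e_1=\tau x_0e_1=\tau x^*e_1$. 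After this substitution, trace cyclicity and the adjoint $(x^*e_1z^*y^*)^*=yze_1x$ collapse everything to $\tau^{-1}\tr(e_1xyz)=\varepsilon(xyz)$, which is the first equality.

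For the second equality $\varepsilon(xy_{(2)})\,\varepsilon(y_{(1)}z)$ the identical scheme produces $b_z\star a_x$ rather than $a_x\star b_z$, and here the Jones projection sits on the inner side of the \emph{wrong} factor, so \Cref{impfor e(xyz)}$(i)$ cannot be invoked directly; I expect this to be the only genuine obstacle. The remedy is to first take adjoints using the $*$-compatibility of the convolution, $(u\star v)^*=u^*\star v^*$ from \eqref{3Proposition 3.8}: then $(b_z\star a_x)^*=\tau^{-2}(ze_1)\star(e_1x)$, whose first factor $ze_1$ is now in the correct $(\cdot)\,e_1$ form, so \Cref{impfor e(xyz)}$(i)$ applies with $x\mapsto z$, $y\mapsto e_1x$. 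The same pushdown identity $E_1(ze_1)e_1=\tau ze_1$ together with trace cyclicity once more yields $\tau^{-1}\tr(e_1xyz)=\varepsilon(xyz)$, completing the argument. Throughout, the only care required is careful bookkeeping of the powers of $\tau$ and of the conjugate-linearity of the inner product.
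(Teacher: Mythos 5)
Your proposal is correct and follows essentially the same route as the paper: both reduce the product of counits, via the defining duality $\langle\Delta(y),u\otimes v\rangle=\tau^{1/2}\langle y,u\star v\rangle$, to a single convolution that is collapsed using \Cref{impfor e(xyz)} and the pushdown identity $E_1(we_1)e_1=\tau we_1$ from \Cref{pushdown} (the paper packages this step as items $(ii)$–$(iii)$ of \Cref{impfor e(xyz)}, whose proofs are exactly your inlined computation with item $(i)$). The only cosmetic difference is in the second equality, which the paper deduces from the first by conjugation using $\varepsilon(x)=\overline{\varepsilon(x^*)}$ and $\Delta\circ *=(*\otimes *)\circ\Delta$, whereas you rerun the computation after flipping with $(u\star v)^*=u^*\star v^*$ from \eqref{3Proposition 3.8}; both are sound.
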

\begin{prf}
Using \Cref{impfor e(xyz)}, we get the following:
\begin{eqnarray}\nonumber
\varepsilon(x y_{(2)}) \varepsilon(y_{(1)} z)&=&\tau^{-1} \varepsilon(x yE_1(ze_1))\\\nonumber
&=& \tau^{-2}\tr(e_1x yE_1(ze_1))\\\nonumber
&=& \tau^{-2}\tr(E_1(ze_1) e_1x y)\\\nonumber
&=& \tau^{-1}\tr(ze_1x y)\\\nonumber
&=& \varepsilon(xyz).
\end{eqnarray}
Since $\Delta$ is $*$-preserving, we know that $\varepsilon(x)=\overline{\varepsilon(x^*)}$. Thus we have the following:
\begin{eqnarray}\nonumber
\varepsilon(xyz)&=& \overline{\varepsilon(z^*y^*x^*)}\\\nonumber
&=& \overline{\varepsilon(z^*y^*_{(2)})\varepsilon(y^*_{(1)}x^*)}\\\nonumber
&=& \varepsilon(x y_{(1)}) \varepsilon(y_{(2)} z).
\end{eqnarray}
Hence we are done.
\qed
\end{prf}

We now turn to the verification of the weak comultiplicativity of the unit. 
This property is a crucial ingredient in demonstrating that the second relative 
commutant \(B' \cap A_1\), associated with the inclusion \(B \subset A\), 
indeed carries the structure of a weak quantum hypergroup. 
In order to prepare for the main argument, we first collect a number of 
preliminary observations that will serve as essential tools in the subsequent 
analysis.

\begin{lmma}\label{impfordelt(1)otherthings1}
Let $\{\gamma_i\}$ be a quasi-basis of $E_0|_{B'\cap A}$, then we have the following:
$$\Delta(1)=  \sum_i r^+_1(\gamma^*_i) \otimes \gamma_i. $$
\end{lmma}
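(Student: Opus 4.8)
The plan is to compute the coproduct $\Delta(1)$ by testing it against arbitrary elementary tensors $y \otimes z$ and using the defining relation $\langle \Delta(x), y \otimes z\rangle = \tau^{1/2}\langle x, y \star z\rangle$ from \Cref{weak hypergroup structure}. Setting $x = 1$ gives $\langle \Delta(1), y \otimes z\rangle = \tau^{1/2}\langle 1, y \star z\rangle = \tau^{1/2}\,\tr(y \star z)$, so the whole problem reduces to expressing $\tr(y \star z)$ in terms of the proposed right-hand side. I would then check that the candidate $\sum_i r_1^+(\gamma_i^*)\otimes \gamma_i$ produces the same pairing, i.e.
\[
\big\langle \textstyle\sum_i r_1^+(\gamma_i^*)\otimes \gamma_i,\ y\otimes z\big\rangle
= \sum_i \tr\big(r_1^+(\gamma_i^*)^*\, y\big)\,\tr(\gamma_i^* z),
\]
and match it against $\tau^{1/2}\,\tr(y\star z)$.

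The key computational steps are as follows. First I would rewrite $\tr(y \star z)$ using the definition of the convolution product $y \star z = \mathcal{F}_1^{-1}(\mathcal{F}_1(z)\mathcal{F}_1(y))$, and the $*$-compatibility \Cref{3Proposition 3.8}, together with the trace-preserving and involutive properties of $r_1^+$ from \Cref{aboutreflectionoperator}, much as was done in the proof of \Cref{main theorem}. The goal is to collapse the Fourier transforms and Jones projections down to an expression involving $E_1$, $E_0$, and the quasi-basis $\{\lambda_i\}$. Second, on the candidate side, I would use that $r_1^+$ is $*$-preserving so that $r_1^+(\gamma_i^*)^* = r_1^+(\gamma_i)$, and then apply the trace property of $r_1^+$ (Theorem~\ref{aboutreflectionoperator}) to move it onto the other factor: $\tr(r_1^+(\gamma_i)\,y) = \tr(\gamma_i\, r_1^+(y))$. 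The defining property of the quasi-basis $\{\gamma_i\}$ for $E_0|_{B'\cap A}$, namely $\sum_i \gamma_i\, a\, \gamma_i^* $-type resolution or the reconstruction formula $\sum_i \gamma_i\, E_0(\gamma_i^* a) = a$, should then let me sum over $i$ and recover the desired pairing.

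I expect the main obstacle to be the bookkeeping that identifies the quasi-basis $\{\gamma_i\}$ of $E_0|_{B'\cap A}$ correctly within the computation: the convolution and Fourier transform are defined using the quasi-basis $\{\lambda_i\}$ of the minimal expectation $E_0$ on all of $A$, whereas the statement features a quasi-basis of the \emph{restricted} expectation $E_0|_{B'\cap A}$. The crux will be to show that the partial trace appearing after simplifying $\tr(y\star z)$ is precisely the operator $\sum_i \gamma_i\,E_0(\gamma_i^*\,\cdot\,)$, so that the two quasi-bases link up through the reflection operator $r_1^+$ and its explicit formula \Cref{reflection operator formula}. Once that identification is made, the involutivity and trace-invariance of $r_1^+$ close the argument. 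A useful sanity check along the way is to apply the counit: pairing $\Delta(1)$ with $e_1 \otimes z$ and using $\varepsilon(x) = \tau^{-1}\langle e_1, x\rangle$ together with the counit axiom $(\varepsilon\otimes\mathrm{id})\circ\Delta = \mathrm{id}$ should reproduce $\mathbf{1}$, confirming the normalization implicit in the quasi-basis relation.
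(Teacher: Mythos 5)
Your proposal follows essentially the same route as the paper: pair $\Delta(1)$ against elementary tensors, expand $\tau^{1/2}\tr(y\star z)$ via the Fourier transform and the quasi-basis $\{\lambda_i\}$ to arrive at $\tr\bigl(y\,r_1^+(E_1(z))\bigr)$ using the explicit formula \eqref{reflection operator formula}, and match this with the candidate side via the trace-invariance and involutivity of $r_1^+$ together with the reconstruction identity $\sum_i\gamma_i E_0(\gamma_i^* a)=a$ for $a\in B'\cap A$ (applied to $a=E_1(z)$). The only cosmetic differences are that the paper pulls the $\gamma_i$-sum inside $r_1^+$ rather than moving $r_1^+$ onto the other tensor factor, and that the appeal to \eqref{3Proposition 3.8} is unnecessary here since $1^*=1$.
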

\begin{prf}
For any $x,y \in B'\cap A_1$, we have
\begin{eqnarray}\nonumber
\big<\Delta(1), x\otimes y\big>&=& \tau^{\frac{1}{2}}\big<1, x\star y\big>\\\nonumber
&=&\tau^{-1} \tr(E_2(\mathcal{F}(y)\mathcal{F}(x)e_1e_2))\\\nonumber
&=&\tau^{-2} \tr(\sum_{{i_1},i_2} \lambda_{i_1} xe_2e_1\lambda^*_{i_1}e_1e_2\lambda_{i_2}ye_2e_1\lambda^*_{i_2} )\\\nonumber
&=&\tau^{-1} \tr(\sum_{i_2} xe_2\lambda_{i_2}ye_2e_1\lambda^*_{i_2} )\\\nonumber
&=&\tau^{-1} \tr(\sum_{i_2} x\lambda_{i_2}E_1(y)e_2e_1\lambda^*_{i_2} )\\\nonumber
&=& \tr(\sum_{i_2} x\lambda_{i_2}E_1(y)e_1\lambda^*_{i_2} ).
\end{eqnarray}
Using \Cref{reflection operator formula}, we have the following:
\begin{eqnarray}\nonumber
r^+_1(E_1(y))=\tau^{-1}\sum_{i}E_1(e_1 \lambda_iE_1(y))e_1\lambda^*_i=\sum_{i} \lambda_i E_1(y)e_1\lambda^*_i.
\end{eqnarray}
Consequently, we obtain the following:
\begin{eqnarray}\label{valuedelta(1)1}
\big<\Delta(1), x\otimes y\big>=\tr\big(x\,r^+_1(E_1(y))\big).
\end{eqnarray}
We further observe that
\begin{eqnarray}\nonumber
\big<\sum_i r^+_1(\gamma^*_i) \otimes \gamma_i,x \otimes y \big>&=&\sum_{i}\tr(r^+_1(\gamma_i)x)\,\tr(\gamma^*_iy)\\\nonumber
&=& \sum_{i}\tr(r^+_1(\gamma_i)x)\,E_0(\gamma^*_iE_1(y))\\\nonumber
&=& \tr\big(r^+_1(\sum_{i}\gamma_iE_0(\gamma^*_iE_1(y)))x\big)\\\nonumber
&=& \tr\big(r^+_1(E_1(y))x\big)\\\label{valuedelta(1)2}
&=&\tr\big(x\,r^+_1(E_1(y))\big).
\end{eqnarray}
So finally from \Cref{valuedelta(1)1} and \Cref{valuedelta(1)2}, we are done.
\end{prf}
\begin{lmma}\label{impfordelt(1)otherthings2}
For any $x\in B'\cap A_1$ and $v\in B'\cap A$, we have
$$\Delta(xv)=\Delta(x)(v\otimes 1).$$
\end{lmma}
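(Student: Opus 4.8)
The plan is to avoid manipulating the convolution product directly and instead exploit the closed formula for the coproduct obtained in \Cref{main theorem}, namely $\Delta(z)=\tau^{-2}\,T^{-1}\circ E(e_2 z e_3 e_2)$ for $z\in B'\cap A_1$, where $T(a\otimes b)=a\,S^+_0(b)$ and $E$ is the trace preserving conditional expectation onto $(B'\cap A_1)(A_1'\cap A_3)$. Applying this with $z=xv$ (note $xv\in B'\cap A_1$ since $v\in B'\cap A\subseteq B'\cap A_1$), the task reduces to tracking how the single element $v$ can be pulled past $e_2$, $e_3$, $E$, and finally $T^{-1}$.

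First I would record the relevant commutation relations. Since $v\in B'\cap A\subseteq A=A_0$, and $e_2$ commutes with $A_0$ while $e_3$ commutes with $A_1\supseteq A_0$, the element $v$ commutes with both $e_2$ and $e_3$. Consequently
\[
e_2(xv)e_3e_2=e_2 x e_3 e_2\, v,
\]
so that $\Delta(xv)=\tau^{-2}\,T^{-1}\circ E(e_2 x e_3 e_2\, v)$. One also checks that $e_2 x e_3 e_2$ indeed lies in $B'\cap A_3$ (every $b\in B$ commutes with $x$, $e_2$, $e_3$), so $E$ may legitimately be applied.

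Next I would invoke the bimodule property of the conditional expectation $E$. Because $v\in B'\cap A\subseteq B'\cap A_1\subseteq (B'\cap A_1)(A_1'\cap A_3)$ lies in the range of $E$, we may factor it out: $E(e_2 x e_3 e_2\, v)=E(e_2 x e_3 e_2)\,v$. Finally, since $T$ is a unital $*$-algebra isomorphism with $T(v\otimes 1)=v\,S^+_0(1)=v$, the homomorphism property gives $T^{-1}(w\,v)=T^{-1}(w)(v\otimes 1)$ for any $w$ in the range algebra; taking $w=E(e_2 x e_3 e_2)$ and combining with \Cref{main theorem} yields
\[
\Delta(xv)=\tau^{-2}\,T^{-1}\big(E(e_2 x e_3 e_2)\big)(v\otimes 1)=\Delta(x)(v\otimes 1),
\]
as desired.

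The argument is in essence a careful bookkeeping of commutations, so the only genuine point requiring care---and the step I expect to be the main obstacle---is verifying that $v$ truly commutes with $e_3$ and lies in the range of $E$; both hinge on the inclusion $B'\cap A\subseteq A_0$ together with the standard tower relation that $e_n$ commutes with $A_{n-2}$. A purely computational alternative, working directly from the definition in \Cref{weak hypergroup structure}, would instead reduce the claim to the convolution identity $(yv^*)\star z=(y\star z)v^*$ for $v^*\in B'\cap A$; this is logically equivalent to the statement but appears harder to establish without the machinery of \Cref{main theorem}, which is why I would favour the formula-based route.
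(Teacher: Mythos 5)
Your argument is correct, but it takes a genuinely different route from the paper. The paper proves the lemma by brute-force computation from \Cref{weak hypergroup structure}: it pairs $\Delta(xv)$ against $y\otimes z$, unwinds the convolution through the Fourier transform and the quasi-basis $\{\lambda_i\}$, and shows that both $\tau^{\frac12}\langle xv, y\star z\rangle$ and $\langle x_{(1)}v,y\rangle\langle x_{(2)},z\rangle=\tau^{\frac12}\langle x,(yv^*)\star z\rangle$ reduce to the same expression $\tau^{-1}\sum_{i}\tr(yv^*e_2x^*\lambda_{i}ze_2e_1\lambda^*_{i})$ --- i.e.\ exactly the ``computational alternative'' you set aside at the end, which in fact goes through in a few lines once one cancels the quasi-basis sums. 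Your route instead leans on the closed formula $\Delta(z)=\tau^{-2}T^{-1}\circ E(e_2ze_3e_2)$ from \Cref{main theorem}, and all the steps you need are sound: $v\in B'\cap A\subseteq A$ commutes with $e_2$ (which commutes with $A$) and with $e_3$ (which commutes with $A_1\supseteq A$); $v$ lies in the range algebra $(B'\cap A_1)(A_1'\cap A_3)$ of the conditional expectation $E$, so the bimodule property gives $E(e_2xe_3e_2\,v)=E(e_2xe_3e_2)\,v$; and $T(v\otimes 1)=v\,S_0^+(1)=v$ together with multiplicativity of $T^{-1}$ finishes the argument. There is no circularity, since \Cref{main theorem} is established before this lemma and does not depend on it. What your approach buys is conceptual transparency --- the lemma becomes pure bookkeeping of commutation relations and the module property of $E$, with no quasi-basis manipulation; what the paper's approach buys is independence from the machinery of \Cref{isomorphism} and \Cref{main theorem}, staying entirely at the level of the defining pairing.
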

\begin{prf}
For any $y,z \in B'\cap A_1$, we have the following:
\begin{eqnarray}\nonumber
\big<\Delta(xv), y\otimes z\big>&=&\tau^{\frac{1}{2}} \big<xv,y \star z\big>\\\nonumber
&=& \tau^{-1}\tr(v^*x^*E_2(\mathcal{F}(z)\mathcal{F}(y)e_1e_2))\\\nonumber
&=& \tau^{-1}\tr(v^*x^*\mathcal{F}(z)\mathcal{F}(y)e_1e_2)\\\nonumber
&=& \tau^{-2}\sum_{i_1,i_2}\tr(\lambda_{i_1}ye_2e_1\lambda^*_{i_1}e_1e_2v^*x^*\lambda_{i_2}ze_2e_1\lambda^*_{i_2})\\\label{impfordelta(xv)1}
&=& \tau^{-1}\sum_{i_2}\tr(yv^*e_2x^*\lambda_{i_2}ze_2e_1\lambda^*_{i_2}).
\end{eqnarray}
Also we can observe the following:
\begin{eqnarray}\nonumber
\big< x_{(1)}v, y\big>\,\big< x_{(2)}, z\big> &=&\big< x_{(1)}, yv^*\big>\,\big< x_{(2)}, z\big>\\\nonumber
&=& \tau^{\frac{1}{2}}\big< x, (yv^*)\star z\big>\\\nonumber
&=& \tau^{-1}\tr(x^*E_2(\mathcal{F}(z)\mathcal{F}(yv^*)e_1e_2))\\\nonumber
&=& \tau^{-1}\tr(x^*\mathcal{F}(z)\mathcal{F}(yv^*)e_1e_2)\\\nonumber
&=& \tau^{-2}\sum_{i_1,i_2}\tr(\lambda_{i_1}yv^*e_2e_1\lambda^*_{i_1}e_1e_2x^*\lambda_{i_2}ze_2e_1\lambda^*_{i_2})\\\label{impfordelta(xv)2}
&=& \tau^{-1}\sum_{i_2}\tr(yv^*e_2x^*\lambda_{i_2}ze_2e_1\lambda^*_{i_2}).
\end{eqnarray}
So finally from \Cref{impfordelta(xv)1} and \Cref{impfordelta(xv)2}, we are done.
\qed
\end{prf}
\begin{lmma}\label{impfordelt(1)otherthings3}
We have, $r^+_1(B'\cap A)= A'\cap A_1.$
\end{lmma}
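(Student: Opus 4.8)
The plan is to prove the inclusion $r^+_1(B'\cap A)\subseteq A'\cap A_1$ by a direct computation, and then promote it to an equality by a dimension count. First I would evaluate $r^+_1$ on $B'\cap A$. For $x\in B'\cap A$ we have $\lambda_i x\in A$, so the $A$-bimodularity of $E_1$ together with the basic-construction relation $E_1(e_1)=\tau$ gives $E_1(e_1\lambda_i x)=E_1(e_1)\lambda_i x=\tau\,\lambda_i x$. Substituting this into the formula \Cref{reflection operator formula} for the reflection operator collapses the sum to
\[
r^+_1(x)=\sum_i \lambda_i\, x\, e_1\,\lambda_i^*=\tau^{-1}\,E^{B'\cap A_1}_{A'\cap A_1}(x e_1),
\]
where I use that $xe_1\in B'\cap A_1$ (both $x$ and $e_1$ commute with $B$) and the formula $E^{B'\cap A_1}_{A'\cap A_1}(\,\cdot\,)=\tau\sum_i\lambda_i(\,\cdot\,)\lambda_i^*$ for the trace-preserving conditional expectation recalled in \Cref{preliminaries}. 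Since the range of this conditional expectation is exactly $A'\cap A_1$, this already yields $r^+_1(x)\in A'\cap A_1$, and hence $r^+_1(B'\cap A)\subseteq A'\cap A_1$.

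Next I would upgrade this inclusion to an equality. By \Cref{aboutreflectionoperator} the operator $r^+_1$ is involutive, hence a linear bijection of $B'\cap A_1$; its restriction to the subspace $B'\cap A$ is therefore injective, so $\dim r^+_1(B'\cap A)=\dim(B'\cap A)$. On the other hand, the Fourier transform $\mathcal{F}_0\colon B'\cap A\to A'\cap A_1$ is a linear isomorphism, its inverse $\mathcal{F}_0^{-1}$ being recorded in \Cref{preliminaries}; consequently $\dim(B'\cap A)=\dim(A'\cap A_1)$. Thus $r^+_1(B'\cap A)$ is a subspace of the finite-dimensional algebra $A'\cap A_1$ of the same dimension, which forces $r^+_1(B'\cap A)=A'\cap A_1$.

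The only genuinely computational ingredient is the first step, and I expect it to be the main (though routine) obstacle: one must check carefully that $xe_1\in B'\cap A_1$ so that the conditional-expectation formula is applicable, and justify the bimodule identity $E_1(e_1\lambda_i x)=\tau\lambda_i x$. An alternative to the dimension argument would be to establish the reverse inclusion $r^+_1(A'\cap A_1)\subseteq B'\cap A$ directly and then apply $r^+_1$ once more, invoking involutivity; however, the direct computation of $r^+_1(w)$ for $w\in A'\cap A_1$ does not visibly collapse into an element of $A$, so the route through the bijectivity of $\mathcal{F}_0$ is both shorter and cleaner.
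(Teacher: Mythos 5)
Your proposal is correct, and its first half coincides exactly with the paper's argument: both compute, for $v\in B'\cap A$, that $E_1(e_1\lambda_i v)=\tau\lambda_i v$ by $A$-bimodularity, collapse the reflection formula to $r_1^+(v)=\sum_i\lambda_i v e_1\lambda_i^*=\tau^{-1}E^{B'\cap A_1}_{A'\cap A_1}(ve_1)$, and conclude $r_1^+(B'\cap A)\subseteq A'\cap A_1$. Where you diverge is in upgrading the inclusion to an equality. The paper carries out the very computation you set aside: for $u\in A'\cap A_1$ one has $E_1(e_1\lambda_i u)=E_1(e_1u\lambda_i)=E_1(e_1u)\lambda_i$ since $u$ commutes with $\lambda_i\in A$, and then $\sum_i\lambda_ie_1\lambda_i^*=1$ forces $r_1^+(u)=\tau^{-1}E_1(e_1u)\in A$, hence $r_1^+(A'\cap A_1)\subseteq B'\cap A$; involutivity of $r_1^+$ then gives both reverse inclusions at once. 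So your parenthetical claim that this computation ``does not visibly collapse into an element of $A$'' is not quite right --- it does collapse, precisely because $u$ commutes with the quasi-basis. Your alternative, replacing that computation by injectivity of $r_1^+|_{B'\cap A}$ (from involutivity) together with $\dim(B'\cap A)=\dim(A'\cap A_1)$ via the bijection $\mathcal{F}_0$, is perfectly valid in this finite-dimensional setting and is arguably no longer than the paper's route; what it costs you is the explicit formula $r_1^+(u)=\tau^{-1}E_1(e_1u)$ on $A'\cap A_1$, which the direct computation yields as a byproduct, while what it buys is independence from the quasi-basis identity $\sum_i\lambda_ie_1\lambda_i^*=1$.
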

\begin{prf}
Let $v\in B'\cap A$, then we have the following:
\begin{eqnarray}\nonumber
r^+_1(v)=\tau^{-1}\sum_{i}E_1(e_1 \lambda_iv)e_1\lambda^*_i=\sum_{i} \lambda_i ve_1\lambda^*_i&=&\tau^{-1} E^{B^{\prime}\cap A_1}_{A^{\prime}\cap A_1}(ve_1).
\end{eqnarray}
Hence we get that, $r^+_1(B'\cap A)\subset A'\cap A_1$. Also for any $u\in A'\cap A_1$, we get the following:
\begin{eqnarray}\nonumber
r^+_1(u)&=&\tau^{-1}\sum_{i}E_1(e_1 \lambda_iu)e_1\lambda^*_i\\\nonumber
&=& \tau^{-1}\sum_{i}E_1(e_1u \lambda_i)e_1\lambda^*_i\\\nonumber
&=& \tau^{-1}\sum_{i}E_1(e_1u) \lambda_ie_1\lambda^*_i\\\nonumber
&=& \tau^{-1}\sum_{i}E_1(e_1u).
\end{eqnarray}
Hence we get, $r^+_1(A'\cap A_1)\subset B'\cap A$. Since $r^+_1$ is involutive, we get the desired result.
\qed
\end{prf}
\begin{ppsn}\label{impformainthm5} 
The following holds,
$$(\mathrm{id}\otimes \Delta)\circ\Delta(1)=(\Delta(1)\otimes 1)(1\otimes \Delta(1))=(1\otimes \Delta(1))(\Delta(1)\otimes 1).$$
\end{ppsn}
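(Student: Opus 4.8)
The plan is to reduce everything to the explicit formula for $\Delta(1)$ furnished by \Cref{impfordelt(1)otherthings1}, namely $\Delta(1)=\sum_i r^+_1(\gamma^*_i)\otimes \gamma_i$ for a quasi-basis $\{\gamma_i\}$ of $E_0|_{B'\cap A}$, and then to expand all three expressions appearing in the statement as finite sums over the index set, comparing them leg by leg in $(B'\cap A_1)^{\otimes 3}$.

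First I would compute the two products on the right. From $\Delta(1)\otimes 1=\sum_i r^+_1(\gamma^*_i)\otimes \gamma_i\otimes 1$ and $1\otimes\Delta(1)=\sum_j 1\otimes r^+_1(\gamma^*_j)\otimes\gamma_j$, multiplying componentwise gives $(\Delta(1)\otimes 1)(1\otimes\Delta(1))=\sum_{i,j} r^+_1(\gamma^*_i)\otimes \gamma_i\, r^+_1(\gamma^*_j)\otimes\gamma_j$ and $(1\otimes\Delta(1))(\Delta(1)\otimes 1)=\sum_{i,j} r^+_1(\gamma^*_i)\otimes r^+_1(\gamma^*_j)\,\gamma_i\otimes\gamma_j$. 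These two differ only in the order of $\gamma_i$ and $r^+_1(\gamma^*_j)$ in the middle leg.

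Next I would compute the left-hand side as $(\mathrm{id}\otimes\Delta)\circ\Delta(1)=\sum_i r^+_1(\gamma^*_i)\otimes\Delta(\gamma_i)$. Since $\gamma_i\in B'\cap A$, \Cref{impfordelt(1)otherthings2} applies with $x=1$ and $v=\gamma_i$, yielding $\Delta(\gamma_i)=\Delta(1)(\gamma_i\otimes 1)=\sum_j r^+_1(\gamma^*_j)\,\gamma_i\otimes\gamma_j$. Substituting gives $(\mathrm{id}\otimes\Delta)\circ\Delta(1)=\sum_{i,j} r^+_1(\gamma^*_i)\otimes r^+_1(\gamma^*_j)\,\gamma_i\otimes\gamma_j$, which is already termwise equal to $(1\otimes\Delta(1))(\Delta(1)\otimes 1)$.

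It therefore remains to identify the two middle-leg orderings, i.e.\ to prove $\gamma_i\, r^+_1(\gamma^*_j)=r^+_1(\gamma^*_j)\,\gamma_i$, and this is the only substantive point. Here \Cref{impfordelt(1)otherthings3} is decisive: it gives $r^+_1(\gamma^*_j)\in r^+_1(B'\cap A)=A'\cap A_1$, so $r^+_1(\gamma^*_j)$ commutes with every element of $A$; as $\gamma_i\in B'\cap A\subset A$, the two factors commute, and all three expressions coincide. I expect this commutation to be the crux of the argument, since it is exactly what turns the a priori distinct middle legs into the same element, whereas the rest is routine bookkeeping with the quasi-basis expansion of $\Delta(1)$.
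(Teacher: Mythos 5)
Your proposal is correct and follows essentially the same route as the paper's own proof: expand $\Delta(1)$ via \Cref{impfordelt(1)otherthings1}, use \Cref{impfordelt(1)otherthings2} to evaluate $(\mathrm{id}\otimes\Delta)\circ\Delta(1)$, and invoke \Cref{impfordelt(1)otherthings3} to commute $\gamma_i$ past $r^+_1(\gamma^*_j)$ in the middle leg. The paper performs exactly these steps, so no further comment is needed.
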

\begin{prf}
Using \Cref{impfordelt(1)otherthings1} and \Cref{impfordelt(1)otherthings3}, we get the following:
\begin{eqnarray}\nonumber
(\Delta(1)\otimes 1)(1\otimes \Delta(1))&=& (\sum_i r^+_1(\gamma^*_i) \otimes \gamma_i\otimes 1)(\sum_j 1\otimes r^+_1(\gamma^*_j) \otimes \gamma_j)\\\nonumber
&=& \sum_{i,j} r^+_1(\gamma^*_i) \otimes \gamma_i\, r^+_1(\gamma^*_j)\otimes\gamma_j\\\label{impforppsndelta(1)others1}
&=&\sum_{i,j} r^+_1(\gamma^*_i) \otimes  r^+_1(\gamma^*_j)\, \gamma_i\otimes\gamma_j\\\nonumber
&=&(1\otimes \Delta(1))(\Delta(1)\otimes 1).
\end{eqnarray}
By applying \Cref{impfordelt(1)otherthings1} and \Cref{impfordelt(1)otherthings2}, we obtain:
\begin{eqnarray}\nonumber
(\mathrm{id}\otimes \Delta)\circ\Delta(1)
&=& \sum_ir^+_1(\gamma^*_i)\otimes \Delta(1 \gamma_i)\\\label{impforppsndelta(1)others2}
&=& \sum_{i,j}r^+_1(\gamma^*_i)\otimes r^+_1(\gamma^*_j)\,\gamma_i \otimes \gamma_j.
\end{eqnarray}
Finally, by \Cref{impforppsndelta(1)others1} and \Cref{impforppsndelta(1)others2}, we obtain the desired result.
\qed
\end{prf}

As a consequence of \Cref{impformainthm1}, \Cref{impformainthm6}, \Cref{impformainthm2}, 
\Cref{impformainthm3}, \Cref{impformainthm4}, and \Cref{impformainthm5}, we can now 
synthesize the structural properties established thus far. These results collectively ensure that the coproduct, counit, and the ${\small \texttt{\#}}$-operation on the second relative commutant interact in a manner fully compatible with the axioms of a weak quantum hypergroup. We therefore obtain the following fundamental theorem:
\begin{thm}\label{imp for Theorem B}
Let $B\subset A$ be an inclusion of simple unital $C^*$-algebras with a conditional expectation of index-finite type. If $\Delta$, $\varepsilon$ and ${\small \texttt{\#}}$ are as defined in \Cref{weak hypergroup structure2}. Then $(B'\cap A_1,\Delta, \varepsilon,{\small \texttt{\#}})$ becomes a weak quantum hypergroup.
\end{thm}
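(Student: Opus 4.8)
The plan is to verify, clause by clause, that the data $(B'\cap A_1,\Delta,\varepsilon,{\small \texttt{\#}})$ of \Cref{weak hypergroup structure2} satisfies every requirement in \Cref{weak hypergroup}, since each requirement has already been isolated as one of the preceding results. First I would note that $B'\cap A_1$ is a finite-dimensional, and hence unital, $C^*$-algebra under its ambient product, unit, and involution, so that the underlying object $(A,\cdot,1,*)$ demanded by the definition is present.

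For the ${\small \texttt{\#}}$-coalgebra part, I would cite \Cref{impformainthm1} for coassociativity and the counit identity; \Cref{impformainthm2} is precisely axiom (iii), namely $\Delta\circ{\small \texttt{\#}}=\Pi\circ({\small \texttt{\#}}\otimes{\small \texttt{\#}})\circ\Delta$; and \Cref{impformainthm3}(i),(ii) provide the antilinearity of ${\small \texttt{\#}}$ and its involutivity. The positivity of $\Delta$ is contained in \Cref{impformainthm6}, which in fact yields complete positivity, and the same theorem records that $\Delta$ is $*$-preserving, which is exactly the compatibility $\Delta\circ *=(*\otimes *)\circ\Delta$. The two remaining compatibilities are read off from \Cref{impformainthm3}: part (iii) gives $(x\cdot y)^{{\small \texttt{\#}}}=x^{{\small \texttt{\#}}}\cdot y^{{\small \texttt{\#}}}$, and part (iv) gives ${\small \texttt{\#}}\circ *=*\circ{\small \texttt{\#}}$. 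Finally, weak multiplicativity of the counit is the content of \Cref{impformainthm4}, and weak comultiplicativity of the unit is \Cref{impformainthm5}. Collecting these, all defining conditions hold and the theorem follows.

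Because the argument is purely an assembly of already-proved statements, I do not expect a genuine obstacle at this stage; the real work was front-loaded into the supporting lemmas. Had I been building these from scratch, the two hardest points would have been the complete positivity of $\Delta$ in \Cref{impformainthm6} --- which rests on rewriting $\Delta$ as $\tau^{-1}T^{-1}\circ E(\tau^{-1}e_2(\cdot)e_3e_2)$ via the isomorphism $T$ of \Cref{isomorphism}, thereby exhibiting it as a composite of $*$-homomorphisms and a conditional expectation --- and the weak comultiplicativity of the unit in \Cref{impformainthm5}, which depends on the explicit shape $\Delta(1)=\sum_i r_1^+(\gamma_i^*)\otimes\gamma_i$ together with $r_1^+(B'\cap A)=A'\cap A_1$. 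In the present proof the only care required is to match each axiom with its correspondingly phrased result; in particular, one should observe that the phrase ``$\Delta$ is $*$-preserving'' is literally axiom 3(ii) once $A\otimes A$ is given its natural tensor involution.
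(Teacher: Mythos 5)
Your proposal is correct and matches the paper's own argument, which likewise obtains \Cref{imp for Theorem B} purely by assembling \Cref{impformainthm1}, \Cref{impformainthm6}, \Cref{impformainthm2}, \Cref{impformainthm3}, \Cref{impformainthm4}, and \Cref{impformainthm5} against the axioms of \Cref{weak hypergroup}. Your matching of each axiom to its supporting result is exactly the one the paper intends.
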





We recall that the pairing \(\langle \,,\, \rangle_{\mathrm{NV}}\) provides a nondegenerate duality between the relative commutants \(B' \cap A_1\) and \(A' \cap A_2\). This leads to the following consequence.

\begin{thm}
Let \(B \subset A\) be an inclusion of simple unital \(C^*\)-algebras with a conditional expectation of index-finite type. Then \(A' \cap A_2\) carries a weak quantum hypergroup structure induced by
this duality.
\end{thm}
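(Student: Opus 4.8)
The plan is to observe that the asserted structure on $A' \cap A_2$ is nothing but the weak quantum hypergroup structure already furnished by \Cref{imp for Theorem B}, now applied to the shifted inclusion $A \subset A_1$, and then to match it against the data transported from $B' \cap A_1$ through the nondegenerate pairing $\langle\,,\,\rangle_{\mathrm{NV}}$. First I would check that $A \subset A_1$ is itself an inclusion to which \Cref{imp for Theorem B} applies: by Watatani's theory the basic construction $A_1$ of the simple pair $B \subset A$ is again simple and unital, and the dual minimal expectation $E_1 \colon A_1 \to A$ is of index-finite type with $[A_1 : A]_0 = [A:B]_0 = \tau^{-1}$, so the value of $\tau$ is unchanged under the shift. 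The associated Jones tower for $A \subset A_1$ is the one-step shift $A \subset A_1 \subset A_2 \subset \cdots$ of the original tower, with Jones projections $e_2, e_3, \dots$, and its \emph{second} relative commutant is precisely $A' \cap A_2$. Hence every formula of \Cref{weak hypergroup structure2} transcribes verbatim to this setting: the coproduct $\Delta_{A'\cap A_2}$ determined by $\langle \Delta_{A'\cap A_2}(w), w'\otimes w''\rangle = \tau^{\frac12}\langle w, w'\star_1 w''\rangle$, the counit $\varepsilon_{A'\cap A_2}(w) = \tau^{-1}\langle e_2, w\rangle$, and the involution ${\small \texttt{\#}}$ defined exactly as in \Cref{weak hypergroup structure2} but via the reflection operator of the shifted tower.

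Next I would simply invoke \Cref{imp for Theorem B} for the inclusion $A \subset A_1$. All of the constituent results---the coalgebra axioms (\Cref{impformainthm1}), positivity of the coproduct (\Cref{impformainthm6}), the ${\small \texttt{\#}}$-coalgebra and $*$-compatibility identities (\Cref{impformainthm2} and \Cref{impformainthm3}), weak multiplicativity of the counit (\Cref{impformainthm4}), and weak comultiplicativity of the unit (\Cref{impformainthm5})---were established for an \emph{arbitrary} finite-index inclusion of simple unital $C^*$-algebras, and therefore carry over unchanged. This already shows that $(A'\cap A_2, \Delta_{A'\cap A_2}, \varepsilon_{A'\cap A_2}, {\small \texttt{\#}})$ satisfies every axiom of \Cref{weak hypergroup} and is thus a weak quantum hypergroup.

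Finally I would identify this intrinsic structure with the one induced by the duality. By \Cref{main theorem 2} we have $\Delta_{A'\cap A_2} = d\,\Delta^{\mathrm{NV}}_{A'\cap A_2}$, and by the proposition relating the two counits, $\varepsilon^{\mathrm{NV}}_{A'\cap A_2} = d\,\varepsilon_{A'\cap A_2}$, where $\Delta^{\mathrm{NV}}$ and $\varepsilon^{\mathrm{NV}}$ are exactly the coproduct and counit that the pairing $\langle\,,\,\rangle_{\mathrm{NV}}$ transports from the multiplication and unit of $B'\cap A_1$ (see \Cref{dfnold} and \Cref{impincrlre}). Thus the duality-induced data differ from the intrinsic data only by the positive constant $d$, via the consistent rescaling $\Delta \mapsto \Delta/d$, $\varepsilon \mapsto d\,\varepsilon$, and I would then confirm that each axiom survives: coassociativity and positivity of $\Delta$ are manifestly preserved, while in the counit law, the weak multiplicativity of the counit, and the weak comultiplicativity of the unit the powers of $d$ cancel because $\varepsilon$ scales inversely to $\Delta$. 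The main point to watch---and essentially the only place where care is required---is precisely this bookkeeping of $d$ across the axioms that mix $\Delta$ and $\varepsilon$ at different orders; I would also stress that routing the argument through \Cref{imp for Theorem B}, rather than attempting to dualize abstractly, is what secures positivity of the coproduct, a property that does not transparently pass through a purely algebraic duality.
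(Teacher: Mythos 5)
Your argument is correct, and it uses the same two load-bearing facts as the paper---the identification $d\,\Delta^{\mathrm{NV}}_{A'\cap A_2}=\Delta_{A'\cap A_2}$ from \Cref{main theorem 2} and the complete positivity of the intrinsic coproduct from \Cref{impformainthm6}---but it runs the logic in the opposite direction. The paper starts from the weak quantum hypergroup $(B'\cap A_1,\Delta,\varepsilon,\texttt{\#})$ of \Cref{imp for Theorem B}, transports that structure to $A'\cap A_2$ through the nondegenerate pairing $\langle\,,\,\rangle_{\mathrm{NV}}$ (so that, e.g., coassociativity of the dual coproduct comes from associativity of multiplication on $B'\cap A_1$), and invokes the intrinsic coproduct of the shifted inclusion only at the end, to import positivity. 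You instead apply \Cref{imp for Theorem B} directly to the shifted inclusion $A\subset A_1$ (after checking that $A_1$ is simple and that the minimal index, hence $\tau$, is preserved), obtain the intrinsic weak quantum hypergroup on $A'\cap A_2$ outright, and then use \Cref{main theorem 2} and the counit relation to identify it with the duality-induced data up to the scalar $d$. Your route has the advantage of making the $d$-bookkeeping across the axioms explicit (which the paper leaves implicit), and of not needing the intermediate weak quantum hypergroup $(B'\cap A_1,\Delta^{\mathrm{NV}}_{B'\cap A_1},\varepsilon^{\mathrm{NV}}_{B'\cap A_1},\texttt{\#})$ from \Cref{impincrlre}. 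What it glosses over, and what the paper's proof and the remark following it do address, is the matching of the involution and of the $\texttt{\#}$-operation across the duality: the theorem asserts a structure \emph{induced by the duality}, so one should check that the $*$ and $\texttt{\#}$ defined on $A'\cap A_2$ via $\langle x,w^{*}\rangle_{\mathrm{NV}}=\overline{\langle x^{\texttt{\#}},w\rangle}_{\mathrm{NV}}$ and $\langle x,w^{\texttt{\#}}\rangle_{\mathrm{NV}}=\overline{\langle x^{*},w\rangle}_{\mathrm{NV}}$ agree with the usual involution and with the reflection operator $(r_1^{+})^{A\subset A_1}=r_1^{-}$ of the shifted tower; this is a genuine (if routine) verification, resting on \cite[Lemma~4.3]{preprint}, that your write-up should include rather than assume.
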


\begin{proof}
By \Cref{imp for Theorem B}, the quadruple 
\((B' \cap A_1, \Delta, \varepsilon, \texttt{\#})\) forms a weak quantum hypergroup. 
Now consider the coproduct and counit on \(B' \cap A_1\) as in \Cref{dfnold}. Moreover, \Cref{impincrlre} yields $d\, \Delta^{\mathrm{NV}}_{B' \cap A_1} = \Delta$, $\varepsilon^{\mathrm{NV}}_{B' \cap A_1} = d\, \varepsilon,$ and therefore \((B' \cap A_1, \Delta^{\mathrm{NV}}_{B' \cap A_1}, \varepsilon^{\mathrm{NV}}_{B' \cap A_1}, \texttt{\#})\) is also a weak quantum hypergroup. Next, we consider the coproduct and counit on \(A' \cap A_2\) induced by the duality pairing \(\langle \cdot, \cdot \rangle_{\mathrm{NV}}\), making it the dual of \((B' \cap A_1, \Delta^{\mathrm{NV}}_{B' \cap A_1}, \varepsilon^{\mathrm{NV}}_{B' \cap A_1})\). By \Cref{main theorem 2}, we have $d \, \Delta^{\mathrm{NV}}_{A' \cap A_2} = \Delta_{A' \cap A_2},$ where \(\Delta_{A' \cap A_2}\) is the coproduct introduced in 
\Cref{weak hypergroup structure}, arising from the inclusion \(A \subset A_1\). Hence by \Cref{impformainthm6}, \(\Delta^{\mathrm{NV}}_{A' \cap A_2}\) is completely positive. We equip \(A' \cap A_2\) with a \(*\)-structure determined by
\[
\langle x, w^* \rangle_{\mathrm{NV}}
=
\overline{\langle x^{\texttt{\#}}, w \rangle}_{\mathrm{NV}},
\qquad x \in B' \cap A_1,\ \ w \in A' \cap A_2,
\]
and one verifies that this coincides with the usual involution on \(A' \cap A_2\). Furthermore, the map \(\texttt{\#} \colon A' \cap A_2 \to A' \cap A_2\) is defined by
\[
\langle x, w^{\texttt{\#}} \rangle_{\mathrm{NV}}
=
\overline{\langle x^*, w \rangle}_{\mathrm{NV}},
\qquad x \in B' \cap A_1,\ \ w \in A' \cap A_2.
\]
With these structures, the algebraic object 
\((A' \cap A_2, \Delta_{A' \cap A_2}, \varepsilon_{A' \cap A_2}, \texttt{\#})\) satisfies all of the axioms of a weak quantum hypergroup. 
This completes the proof.
\end{proof}

\begin{rmrk}
The algebra \(A' \cap A_2\) also carries a weak quantum hypergroup structure by \Cref{weak hypergroup structure2}, arising from the inclusion \(A \subset A_1\). In this setting, the coproduct and counit satisfy $d\, \Delta^{\mathrm{NV}}_{A' \cap A_2} = \Delta_{A' \cap A_2}$ and $\varepsilon^{\mathrm{NV}}_{A' \cap A_2} = d\, \varepsilon_{A' \cap A_2}.$ For the weak quantum hypergroup structure on \(A' \cap A_2\) coming from \Cref{weak hypergroup structure2}, the ${\texttt{\#}}$ is given by $w^{\texttt{\#}} := (r_1^+)^{A \subset A_1}(w^*).
$ By \cite[Proposition~3.20]{BGPS}, we have \((r_1^+)^{A \subset A_1} = r_1^-\). Moreover, by \cite[Lemma~4.3]{preprint}, $\langle x, r_1^-(w) \rangle_{\mathrm{NV}}
=
\langle r_1^+(x), w \rangle_{\mathrm{NV}},$ for $ x \in B' \cap A_1,\ w \in A' \cap A_2.$ Using these identities, it follows that the $\texttt{\#}$-operation defined in \Cref{weak hypergroup structure2} coincides with the one arising from the duality pairing \(\langle \cdot, \cdot \rangle_{\mathrm{NV}}\).
\end{rmrk}

If \(B' \cap A = \mathbb{C}\), then by \Cref{impfordelt(1)otherthings1} we have 
\(\Delta(1) = 1 \otimes 1\). Hence, from \Cref{impformainthm4} it follows that $\varepsilon(xy) = \varepsilon(x)\varepsilon(y)$ for all $x,y \in B' \cap A_1.$ Therefore, we obtain the following:

\begin{thm}\label{forirr}
Let $B\subset A$ be an irreducible inclusion of simple unital $C^*$-algebras with a conditional expectation of index-finite type. Then $(B'\cap A_1,\Delta, \varepsilon,{\small \texttt{\#}})$ becomes a quantum hypergroup.
\end{thm}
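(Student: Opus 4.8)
The plan is to leverage the fact, already established in \Cref{imp for Theorem B}, that $(B'\cap A_1,\Delta,\varepsilon,{\small \texttt{\#}})$ is a weak quantum hypergroup, and then to show that irreducibility forces exactly the two axioms of \Cref{def:hypergroup} that genuinely strengthen \Cref{weak hypergroup}. Comparing the two definitions, the ${\small \texttt{\#}}$-coalgebra axioms $(i)$--$(iv)$, the positivity of $\Delta$ (which follows from complete positivity in \Cref{impformainthm6}), and the compatibility relations $(x\cdot y)^{{\small \texttt{\#}}}=x^{{\small \texttt{\#}}}\cdot y^{{\small \texttt{\#}}}$, $\Delta\circ *=(*\otimes *)\circ\Delta$, and ${\small \texttt{\#}}\circ *=*\circ{\small \texttt{\#}}$ all hold verbatim for the weak structure and hence require nothing new. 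The only two conditions in \Cref{def:hypergroup} that are strictly stronger than their weak counterparts are $\Delta(1)=1\otimes 1$ (in place of weak comultiplicativity of the unit) and the multiplicativity of the counit $\varepsilon(a\cdot b)=\varepsilon(a)\varepsilon(b)$ (in place of weak multiplicativity). Thus the entire argument reduces to verifying these two identities under the hypothesis $B'\cap A=\mathbb{C}$.

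First I would establish $\Delta(1)=1\otimes 1$. Irreducibility means $B'\cap A=\mathbb{C}\,1$, so the restriction $E_0|_{B'\cap A}$ is the trivial expectation onto $\mathbb{C}$, and the singleton $\{1\}$ serves as a quasi-basis for it (indeed $1\cdot E_0(1^*x)=x$ for every $x\in\mathbb{C}\,1$, since $E_0$ is unital). Substituting this quasi-basis into the formula of \Cref{impfordelt(1)otherthings1} collapses the sum to a single term, giving $\Delta(1)=r_1^+(1^*)\otimes 1$. Since $r_1^+$ is unital by \Cref{aboutreflectionoperator}, we have $r_1^+(1)=1$, whence $\Delta(1)=1\otimes 1$.

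Next I would deduce multiplicativity of the counit directly from weak multiplicativity. By \Cref{impformainthm4}, for all $x,y,z\in B'\cap A_1$ we have $\varepsilon(xyz)=\varepsilon(xy_{(1)})\varepsilon(y_{(2)}z)$. Setting $y=1$ and using $\Delta(1)=1\otimes 1$ (so that $1_{(1)}\otimes 1_{(2)}=1\otimes 1$), the right-hand side collapses to $\varepsilon(x)\varepsilon(z)$ while the left-hand side is $\varepsilon(xz)$; hence $\varepsilon(xz)=\varepsilon(x)\varepsilon(z)$ for all $x,z\in B'\cap A_1$.

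With these two identities in hand, every condition of \Cref{def:hypergroup} is met, and $(B'\cap A_1,\Delta,\varepsilon,{\small \texttt{\#}})$ is a quantum hypergroup. I do not anticipate a genuine obstacle here: the substantive analytic work was already carried out in proving the weak quantum hypergroup structure, and the present argument is essentially a bookkeeping of which weak axioms upgrade under irreducibility. The only points demanding mild care are the reduction of the quasi-basis in \Cref{impfordelt(1)otherthings1} to the singleton $\{1\}$ when $B'\cap A=\mathbb{C}$, and the normalization $\varepsilon(1)=1$ (which follows from the counit axiom $(\varepsilon\otimes\mathrm{id})\circ\Delta=\mathrm{id}$ applied to $\Delta(1)=1\otimes 1$), ensuring the collapsed expressions are correctly interpreted.
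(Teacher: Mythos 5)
Your proposal is correct and follows essentially the same route as the paper: the paper likewise observes that irreducibility makes $\{1\}$ a quasi-basis for $E_0|_{B'\cap A}$, so \Cref{impfordelt(1)otherthings1} gives $\Delta(1)=1\otimes 1$, and then specializes the weak multiplicativity of \Cref{impformainthm4} at $y=1$ to obtain $\varepsilon(xy)=\varepsilon(x)\varepsilon(y)$, the remaining axioms being inherited from \Cref{imp for Theorem B}. Your bookkeeping of which axioms of \Cref{def:hypergroup} strictly strengthen those of \Cref{weak hypergroup} is accurate, so nothing is missing.
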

\begin{crlre}
Let \(B \subset A\) be an irreducible inclusion of simple unital \(C^*\)-algebras with a conditional expectation of index-finite type. Then \(A' \cap A_2\) carries a quantum hypergroup structure induced by the duality \(\langle \,,\, \rangle_{\mathrm{NV}}\).
\end{crlre}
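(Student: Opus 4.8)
The plan is to deduce the statement from \Cref{forirr}, applied not to $B\subset A$ itself but to the shifted inclusion $A\subset A_1$, whose second relative commutant is exactly $A'\cap A_2$. The crucial observation is that irreducibility propagates one step up Watatani's tower: I will show that $B'\cap A=\mathbb{C}$ forces $A'\cap A_1=\mathbb{C}$, so that $A\subset A_1$ is again an irreducible inclusion of simple unital $C^*$-algebras carrying a conditional expectation of index-finite type.

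First I would establish this propagation of irreducibility. By \Cref{impfordelt(1)otherthings3} one has $r_1^+(B'\cap A)=A'\cap A_1$, and by \Cref{aboutreflectionoperator} the reflection operator $r_1^+$ is involutive, hence a bijection of $B'\cap A_1$ onto itself. Since $B'\cap A$ and $A'\cap A_1$ both sit inside $B'\cap A_1$, this gives $\dim(A'\cap A_1)=\dim(B'\cap A)=1$; as $A'\cap A_1$ is a unital $C^*$-algebra it must equal $\mathbb{C}\,1$, i.e.\ $A\subset A_1$ is irreducible. Because each $A_n$ in the tower is again a simple unital $C^*$-algebra and $E_1\colon A_1\to A$ is a conditional expectation of index-finite type, \Cref{forirr} applies verbatim to $A\subset A_1$ and shows that $(A'\cap A_2,\Delta_{A'\cap A_2},\varepsilon_{A'\cap A_2},\texttt{\#})$---with the data attached to $A\subset A_1$ as in \Cref{weak hypergroup structure2}---is a genuine quantum hypergroup.

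It then remains to identify this structure with the one induced by the pairing $\langle\,,\,\rangle_{\mathrm{NV}}$. By \Cref{main theorem 2} and the foregoing Remark, the duality-induced coproduct and counit on $A'\cap A_2$ coincide with $\Delta_{A'\cap A_2}$ and $\varepsilon_{A'\cap A_2}$ up to the positive scalar $d=\|\mathrm{Ind}_{\mathrm{W}}(E_1|_{A'\cap A_1})\|_2$, while the two $\texttt{\#}$-operations agree. This is the step I expect to be the main obstacle, since a genuine quantum hypergroup requires $\Delta(1)=1\otimes 1$ and full multiplicativity $\varepsilon(ab)=\varepsilon(a)\varepsilon(b)$, and both of these are spoiled by a rescaling with $d\neq 1$. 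The resolution is that irreducibility is invoked a second time: because $A'\cap A_1=\mathbb{C}$, the restriction $E_1|_{A'\cap A_1}$ is the identity conditional expectation of $\mathbb{C}$ onto $\mathbb{C}$, whence $\mathrm{Ind}_{\mathrm{W}}(E_1|_{A'\cap A_1})=1$ and $d=1$. Consequently the duality-induced structure agrees exactly with the quantum hypergroup structure produced above, completing the proof.
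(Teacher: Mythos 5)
Your proof is correct and follows the route the paper implicitly intends: the corollary is stated there without proof, as an immediate consequence of the preceding theorem on the duality-induced weak quantum hypergroup structure together with \Cref{forirr}. Your two supporting observations --- that irreducibility propagates to $A \subset A_1$ because $r_1^+(B'\cap A) = A'\cap A_1$ with $r_1^+$ a linear bijection (\Cref{impfordelt(1)otherthings3}, \Cref{aboutreflectionoperator}), and that $d = \|\mathrm{Ind}_{\mathrm{W}}(E_1|_{A'\cap A_1})\|_2 = 1$ once $A'\cap A_1 = \mathbb{C}$, so the duality-induced structure coincides exactly with the one obtained from \Cref{forirr} applied to $A \subset A_1$ --- correctly supply the details the paper leaves implicit.
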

\begin{rmrk}
In the case where the inclusion $B\subset A$ has depth \(2\) and $E_1|_{A'\cap A_1}$ has a scalar Watatani index, \(B' \cap A_1\) has the structure of a weak Kac algebra (see \cite{NV,preprint}). In this situation, the coproduct is a \(*\)-homomorphism and therefore completely positive, and all the axioms of a weak quantum hypergroup are automatically satisfied. Consequently, in the depth \(2\) case when $E_1|_{A'\cap A_1}$ has a scalar Watatani index  one recovers the structure of a weak quantum hypergroup in the sense of weak Kac algebras. Thus, the notion of a weak quantum hypergroup provides a more general framework.

It is important to note that \(B' \cap A_1\) admits many similar algebraic structures that satisfy all the axioms of a weak quantum hypergroup except the anti comultiplicativity of \({\small\texttt{\#}}\). For example, starting with the inclusion \(B \subset A_1\), one obtains a coproduct \(\Delta_3\) on \(B' \cap A_3\), and \(\Delta_3(B' \cap A_1) \subset (B' \cap A_3) \otimes (B' \cap A_1).\) Define \(\Delta_{D_1} = (E_2^3 \otimes \mathrm{id}) \circ \Delta_3\big|_{B'\cap A_1}, \ \varepsilon_{D_1} = \varepsilon_3\big|_{B'\cap A_1}, \ x^{\small\texttt{\#}} = r_1^+(x^*)\), where \(E_2^3 = E_2 \circ E_3\). This yields a structure on \(B' \cap A_1\) satisfying every weak quantum hypergroup axiom except \(\Delta \circ \texttt{\#} = \Pi \circ (\texttt{\#} \otimes \texttt{\#}) \circ \Delta\); instead, it satisfies \(\Delta_{D_1} \circ \texttt{\#} = (\texttt{\#} \otimes \texttt{\#}) \circ \Delta_{D_1}.\) Continuing in this manner along the tower of basic constructions yields infinitely many such structures, each sharing the same deviation from the anti comultiplicativity of \({\small\texttt{\#}}\).
\end{rmrk}

We now show that the weak quantum hypergroup \(B' \cap A_1\) appearing in 
\Cref{imp for Theorem B} carries both left-invariant and right-invariant normalized measures. In preparation for this result, we establish a number of preliminary observations that will be used throughout the proof.

\begin{lmma}\label{impforrightinvariantmeasure2}
For any $x\in B'\cap A_1$, we have the following:
$$\varepsilon^s(x)=\tau^{-1}r^+_1(E_1(e_1x)).$$
\end{lmma}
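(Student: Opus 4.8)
The plan is to unfold the definition of the source map and recognise the resulting sum as a quasi-basis reconstruction of $E_1(e_1x)$. Recall that $\varepsilon^s(x)=1_{(1)}\,\varepsilon(x\,1_{(2)})$, so the first move is to feed in the explicit form of $\Delta(1)$ already computed in \Cref{impfordelt(1)otherthings1}, namely $\Delta(1)=\sum_i r^+_1(\gamma^*_i)\otimes\gamma_i$, where $\{\gamma_i\}$ is a quasi-basis of $E_0|_{B'\cap A}=\tr$. This immediately yields
$$\varepsilon^s(x)=\sum_i r^+_1(\gamma^*_i)\,\varepsilon(x\gamma_i).$$

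Next I would evaluate the scalar factors directly from \Cref{weak hypergroup structure}: since $\varepsilon(x\gamma_i)=\tau^{-1}\langle e_1,x\gamma_i\rangle=\tau^{-1}\tr(e_1x\gamma_i)$ (using that $e_1$ is self-adjoint), and since $r^+_1$ is linear by \Cref{aboutreflectionoperator}, the scalars $\tr(e_1x\gamma_i)$ can be pulled inside and $r^+_1$ factored out of the sum:
$$\varepsilon^s(x)=\tau^{-1}\,r^+_1\!\Big(\sum_i \tr(e_1x\gamma_i)\,\gamma^*_i\Big).$$
The crux of the argument is then the purely algebraic identity $\sum_i \tr(e_1x\gamma_i)\,\gamma^*_i=E_1(e_1x)$, which reduces the statement to the desired formula. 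To establish it I would first observe that $E_1(e_1x)\in B'\cap A$ — for $b\in B$ the bimodule property of $E_1$ together with the facts that $b$ commutes with $e_1$ and with $x$ gives $bE_1(e_1x)=E_1(e_1xb)=E_1(e_1x)b$ — so that the quasi-basis reconstruction $c=\sum_i\tr(c\gamma_i)\,\gamma^*_i$ applies with $c=E_1(e_1x)$. The remaining point is the scalar identity $\tr(E_1(e_1x)\gamma_i)=\tr(e_1x\gamma_i)$: since $\gamma_i\in A$, the bimodule property gives $E_1(e_1x)\gamma_i=E_1(e_1x\gamma_i)$, and applying $E_0$ and using $\tr_1=E_0\circ E_1$ identifies the two traces.

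The main obstacle here is not conceptual but one of careful bookkeeping: one must track which trace is in play — $\tr_0$ on $B'\cap A$ versus $\tr_1$ on $B'\cap A_1$ — and verify that $E_1(e_1x)$ genuinely lands in $B'\cap A$, so that expanding it against the quasi-basis $\{\gamma_i\}$ is legitimate. Here \Cref{m1} is exactly what is needed, since it guarantees $\tr_1|_{B'\cap A}=\tr_0$, making the two trace computations consistent. Chaining the three displays then produces $\varepsilon^s(x)=\tau^{-1}\,r^+_1\!\big(E_1(e_1x)\big)$, completing the proof.
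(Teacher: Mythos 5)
Your proof is correct, but it takes a genuinely different route from the paper's. You substitute the explicit Sweedler decomposition $\Delta(1)=\sum_i r_1^+(\gamma_i^*)\otimes\gamma_i$ from \Cref{impfordelt(1)otherthings1} directly into the definition of $\varepsilon^s$ and then recognise $\sum_i\tr(e_1x\gamma_i)\,\gamma_i^*$ as the quasi-basis reconstruction of $E_1(e_1x)$, after checking that $E_1(e_1x)\in B'\cap A$ and reconciling $\tr_1$ with $\tr_0=E_0|_{B'\cap A}$ via \Cref{m1}. The paper instead pairs $\varepsilon^s(x)$ against an arbitrary $y\in B'\cap A_1$, converts the two traces using the trace-preservation and involutivity of $r_1^+$, invokes the co-opposite intertwining $\Delta(x^{\texttt{\#}})=x^{\texttt{\#}}_{(2)}\otimes x^{\texttt{\#}}_{(1)}$ of \Cref{impformainthm2} to collapse the pairing into the single convolution $\langle 1,\,r_1^+(e_1x)\star r_1^+(y^*)\rangle$, computes $r_1^+(e_1x)=\tau^{-1}E_1(e_1x)e_1$ from \Cref{reflection operator formula}, and finishes with \Cref{impfor e(xyz)} item $(i)$. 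Your argument is the more elementary of the two: it needs neither \Cref{impformainthm2} nor \Cref{impfor e(xyz)}, only \Cref{impfordelt(1)otherthings1}, the $A$-bimodularity of $E_1$, and the defining identity $c=\sum_i E_0(c\gamma_i)\gamma_i^*$ of the quasi-basis; the paper's route stays entirely within the convolution calculus and never touches a quasi-basis of $B'\cap A$ at this step. Both are complete; the one point worth stating explicitly in your write-up is that $r_1^+$ is linear (so the scalars $\varepsilon(x\gamma_i)$ may be pulled inside the sum and $r_1^+$ factored out), which is exactly what \Cref{aboutreflectionoperator} supplies.
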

\begin{prf}
For \(y \in B' \cap A_1\), we obtain
\begin{eqnarray}\nonumber
\big<y,\, 1_{(1)}\varepsilon(x1_{(2)})\big> &=& \tau^{-1} \big<y,\, 1_{(1)}\big>\,\big<e_1,\, x1_{(2)}\big>\\\nonumber
&=& \tau^{-1}\tr(1_{(1)}y^*)\,\tr(1_{(2)}e_1 x)\\\nonumber
&=& \tau^{-1}\tr\big(r^+_1(1_{(1)})r^+_1(y^*)\big)\,\tr\big(r^+_1( 1_{(2)})r^+_1( e_1 x)\big)\quad \text{ (by \Cref{aboutreflectionoperator})}\\\nonumber
&=& \tau^{-1} \big<1^{{\small \texttt{\#}}}_{(1)},\,r^+_1(y^*) \big>\,\big< 1^{{\small \texttt{\#}}}_{(2)}, \, r^+_1( e_1 x)\big>\\\label{impforinvariantmeasure1}
&=& \tau^{-\frac{1}{2}} \big<1,\,r^+_1( e_1 x) \star r^+_1(y^*) \big> \quad \text{ (by \Cref{impformainthm2})} .
\end{eqnarray}
Applying \Cref{reflection operator formula}, we compute that
\begin{eqnarray}\nonumber
r^+_1(e_1x)=\tau^{-1}\sum_{i}E_1(e_1 \lambda_i e_1x)e_1\lambda^*_i= \tau^{-1}\sum_{i}E_1(E_0(\lambda_i)e_1x)e_1\lambda^*_i =\tau^{-1}E_1(e_1x)e_1.
\end{eqnarray}
Then, from \Cref{impforinvariantmeasure1}, we obtain
\begin{eqnarray}\nonumber
\big<y,\, 1_{(1)}\varepsilon(x1_{(2)})\big> &=&\tau^{-\frac{3}{2}} \big<1,\,(E_1(e_1x)e_1 )\star r^+_1(y^*) \big>\\\nonumber
&=& \tau^{-2}\tr\big(E_1(E_1(e_1x)e_1)r^+_1(y^*)\big)\quad \text{ (by \Cref{impfor e(xyz)} item $(i)$)}\\\nonumber
&=& \tau^{-1}\tr\big(E_1(e_1x)r^+_1(y^*)\big)\\\nonumber
&=& \tau^{-1}\tr\big(r^+_1( E_1(e_1x))y^*\big)\\\nonumber
&=&\tau^{-1}  \big<y,\,r^+_1(E_1(e_1x))\big>.
\end{eqnarray}
Hence we are done.
\qed
\end{prf}
\begin{lmma}\label{impforinvariantmeasure2}
For any $x,y\in B'\cap A_1$, we have
$$\big<x, y\star 1\big> = \tau^{-\frac{1}{2}}\tr(yE_1(x^*)).$$
\end{lmma}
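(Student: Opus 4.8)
The plan is to prove the identity $\langle x,\, y \star 1\rangle = \tau^{-\frac{1}{2}}\tr(yE_1(x^*))$ by unwinding the definition of the convolution product in terms of the Fourier transform and then performing the same kind of explicit computation with the quasi-basis $\{\lambda_i\}$ and the conditional expectations that recurs throughout the preceding lemmas. Concretely, I would first rewrite the inner product using $\langle x, y\star 1\rangle = \tr(x^*(y\star 1))$, and then apply the $*$-compatibility of the convolution product (\Cref{3Proposition 3.8}) to pass, if convenient, to $((y^*)\star 1)^*$ so as to land in a form where $\mathcal{F}_1^{-1}(\mathcal{F}_1(1)\mathcal{F}_1(y^*))$ can be expanded. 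The key structural input is that $\mathcal{F}_1(1)$ simplifies dramatically: since $1$ is the unit of $B'\cap A_1$, the formula $\mathcal{F}_1(1)=\tau^{-1}\sum_i \lambda_i e_2 e_1 \lambda_i^*$ collapses (using $E_0(\lambda_i^*)$-type reductions analogous to those in \Cref{impfordelt(1)otherthings1} and the proof of \Cref{impforrightinvariantmeasure2}) to something proportional to a single Jones projection, which is what makes the star product with $1$ tractable.

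After substituting the Fourier-transform expressions, I would carry out the now-familiar telescoping of $E_0\circ E_1\circ E_2$ applied to a word in the $\lambda_i$, $e_1$, $e_2$, using repeatedly the pull-through relations $e_2\lambda e_2 = E_1(\lambda)e_2$, the trace property $\tr = E_0\circ E_1\circ E_2$, the identity $E_0(\lambda_i)\lambda_i^*$-summation behaviour of the quasi-basis, and \Cref{m1} relating $\tr_n(xe_n)$ to $\tau\,\tr_{n-1}(x)$. The powers of $\tau$ should be tracked carefully, since the target constant is $\tau^{-\frac12}$; each application of a conditional expectation to a Jones projection contributes a factor of $\tau$, and the normalizing factors $\tau^{-3/2}$ in $\mathcal{F}_1$ and $\tau^{-3/2}$ in the convolution definition must be combined with these. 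The computation is structurally identical to item $(i)$ of \Cref{impfor e(xyz)} — indeed, $y\star 1$ is the special case of $(xe_1)\star y'$ or of the general star product where one factor is the unit — so I expect the surviving terms to reduce to $E_1(x^*)$ sandwiched appropriately, yielding $\tr(yE_1(x^*))$ after one final application of the bimodule property of $E_1$ over $B'\cap A_1$.

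The main obstacle I anticipate is not conceptual but bookkeeping: correctly simplifying $\mathcal{F}_1(1)$ and then ensuring that the quasi-basis double-sum $\sum_{i_1,i_2}$ genuinely collapses to a single sum (or to a scalar factor) at the right stage, so that no spurious index survives. In particular, one must verify that the reduction $\lambda_{i}^* \lambda_{i'} e_1 \mapsto E_0(\lambda_i^*\lambda_{i'})e_1$-type steps are applied to the correct adjacent pair, since a misplaced Jones projection would change which conditional expectation absorbs the $\lambda$'s. A clean way to sidestep part of this is to recognize that $y\star 1 = \mathcal{F}_1^{-1}(\mathcal{F}_1(1)\mathcal{F}_1(y))$ and to first establish separately the simplified closed form of $\mathcal{F}_1(1)$ as a lemma-internal computation, after which the remaining manipulation mirrors the already-proved \Cref{impfor e(xyz)}$(i)$ almost verbatim. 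Once the power of $\tau$ is confirmed to be $\tau^{-1/2}$ and the operator content reduces to $E_1(x^*)$ paired against $y$ under the trace, the identity follows.
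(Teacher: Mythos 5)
Your plan is essentially the paper's own proof: the paper computes $\big<x, y\star 1\big> = \tau^{-\frac{3}{2}}\tr\big(x^* E_2(\mathcal{F}(1)\mathcal{F}(y)e_1e_2)\big)$ directly, expands via the quasi-basis, and telescopes the conditional expectations exactly as you describe, with your observation that $\mathcal{F}_1(1)$ collapses to a multiple of $e_2$ being the (correct) content of its second line. The approach and the bookkeeping you outline are sound and match the paper.
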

\begin{prf}
We have the following:
\begin{eqnarray}\nonumber
\big<x, y\star 1\big> &=& \tau^{-\frac{3}{2}}\tr(x^* E_2(\mathcal{F}(1)\mathcal{F}(y)e_1e_2))\\\nonumber
&=& \tau^{-2}\tr(\mathcal{F}(y)e_1e_2x^*e_2)\\\nonumber
&=& \tau^{-\frac{5}{2}}\sum_i\tr(\lambda_i ye_2e_1\lambda^*_ie_1e_2x^*e_2)\\\nonumber
&=& \tau^{-\frac{3}{2}}\tr( ye_2x^*e_2)\\\nonumber
&=& \tau^{-\frac{1}{2}}\tr( yE_1(x^*)).
\end{eqnarray}
\qed
\end{prf}
\begin{lmma}\label{impforrightinvariantmeasure3}
For any $v\in B'\cap A$, we have $e_1\,r^+_1(v)=e_1v$.
\end{lmma}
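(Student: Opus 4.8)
The plan is to reduce everything to the explicit form of the reflection operator on $B'\cap A$ that was already computed in \Cref{impfordelt(1)otherthings3}. There it is shown that $r^+_1(v)=\sum_i \lambda_i v e_1 \lambda_i^*$ for every $v\in B'\cap A$. Starting from this identity, I would left-multiply by the Jones projection $e_1$ and exploit the basic-construction relation $e_1 a e_1 = E_0(a)\,e_1$ (valid for $a\in A$) to collapse the middle factor. Concretely, since $\lambda_i v\in A$, one obtains
\[
e_1\, r^+_1(v)=\sum_i e_1\lambda_i v e_1 \lambda_i^* = \sum_i E_0(\lambda_i v)\, e_1 \lambda_i^*.
\]

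Next, I would use that $E_0(\lambda_i v)\in B$ and that $e_1$ commutes with $B$ (as $e_1\in B'\cap A_1$), which lets me pull $e_1$ to the front: $e_1\, r^+_1(v)= e_1 \sum_i E_0(\lambda_i v)\,\lambda_i^*$. It then remains to prove $\sum_i E_0(\lambda_i v)\,\lambda_i^* = v$. This is where the hypothesis $v\in B'\cap A$ enters: by the Kajiwara--Watatani commutation property (\Cref{kajiwara watatani}) one has $E_0(\lambda_i v)=E_0(v\lambda_i)$, and substituting this into the sum and applying the quasi-basis resolution $\sum_i E_0(a\lambda_i)\lambda_i^*=a$ with $a=v$ gives $\sum_i E_0(\lambda_i v)\lambda_i^* = \sum_i E_0(v\lambda_i)\lambda_i^* = v$. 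Combining this with the previous paragraph yields $e_1\, r^+_1(v)=e_1 v$, which is the desired identity.

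The main obstacle, and the only step that is not pure bookkeeping, is the swap $E_0(\lambda_i v)=E_0(v\lambda_i)$. The quasi-basis identity reconstructs an element $a$ from $\sum_i E_0(a\lambda_i)\lambda_i^*$ with the free element appearing on the \emph{left} inside $E_0$, whereas the reflection-operator formula naturally produces $E_0(\lambda_i v)$ with $v$ on the \emph{right}. Reconciling these requires exactly the trace-like property of \Cref{kajiwara watatani}, which is available only because $v$ lies in the relative commutant $B'\cap A$; this is precisely the role played by the hypothesis on $v$. All remaining ingredients---the formula $r^+_1(v)=\sum_i\lambda_i v e_1\lambda_i^*$, the relation $e_1 a e_1=E_0(a)e_1$, and the commutation of $e_1$ with $B$---are standard and require no further justification.
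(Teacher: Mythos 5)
Your argument is correct and follows essentially the same route as the paper's proof: both collapse $e_1\lambda_i v e_1$ to $E_0(\lambda_i v)e_1$, invoke \Cref{kajiwara watatani} to swap $E_0(\lambda_i v)=E_0(v\lambda_i)$, and finish with the quasi-basis reconstruction $\sum_i E_0(v\lambda_i)\lambda_i^*=v$. The only cosmetic difference is that you start from the simplified expression $r_1^+(v)=\sum_i\lambda_i v e_1\lambda_i^*$ from \Cref{impfordelt(1)otherthings3}, whereas the paper works directly from \Cref{reflection operator formula}.
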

\begin{prf}
Let \(v \in B' \cap A\). Using \Cref{reflection operator formula} and \Cref{kajiwara watatani}, we obtain
\begin{eqnarray}\nonumber
e_1\,r^+_1(v)=\tau^{-1}\sum_{i}e_1E_1(e_1 \lambda_iv)e_1\lambda^*_i
 =\sum_i e_1E_0(v\lambda_i)\lambda^*_i=e_1v.
\end{eqnarray}
\qed
\end{prf}
\begin{thm}\label{normalizedmeasures}
Let \(B \subset A\) be an inclusion of simple unital \(C^*\)-algebras with a conditional expectation of index-finite type. Then the weak quantum hypergroup \(B' \cap A_1\), as defined in \Cref{weak hypergroup structure2}, admits both left-invariant and right-invariant normalized measures.
\end{thm}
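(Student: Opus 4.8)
The plan is to show that in both cases the invariant measure is nothing but the canonical Markov trace $\tr$ on $B'\cap A_1$; that is, I would take $\phi=\psi=\tr$ and verify the two invariance identities together with their normalizations.

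\emph{Left-invariant measure.} First I would identify the slice $(\mathrm{id}\otimes\tr)\circ\Delta$. Pairing against an arbitrary $y\in B'\cap A_1$ and using the definition of $\Delta$ in \Cref{weak hypergroup structure}, one finds $\langle y,(\mathrm{id}\otimes\tr)\circ\Delta(x)\rangle=\tau^{\frac12}\langle y\star 1,x\rangle$, and \Cref{impforinvariantmeasure2} evaluates the right-hand side to $\langle y,E_1(x)\rangle$. Hence $(\mathrm{id}\otimes\tr)\circ\Delta=E_1|_{B'\cap A_1}$, whose range lies in $B'\cap A$. Left-invariance then reduces to the fixed-point identity $\varepsilon^t\circ E_1=E_1$: by \Cref{impfor e(xyz)}$(ii)$ we have $\varepsilon^t(E_1(x))=\tau^{-1}E_1\big(E_1(x)\,e_1\big)$, and since $E_1(x)\in A$ the pull-down relation $E_1(a e_1)=\tau a$ (\Cref{pushdown}) gives $\varepsilon^t(E_1(x))=E_1(x)$. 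Therefore $(\varepsilon^t\otimes\tr)\circ\Delta=\varepsilon^t\circ(\mathrm{id}\otimes\tr)\circ\Delta=E_1=(\mathrm{id}\otimes\tr)\circ\Delta$, and normalization is immediate from $(\mathrm{id}\otimes\tr)\circ\Delta(1)=E_1(1)=1$.

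\emph{Right-invariant measure.} The crux is the dual computation $(\tr\otimes\mathrm{id})\circ\Delta(x)=r^+_1\big(E_1(r^+_1(x))\big)$, and here the left--right asymmetry of the convolution enters. Pairing against $y$ now gives $\tau^{\frac12}\langle 1\star y,x\rangle$, and, unlike the previous case, the leg $1\star y$ is not directly covered by \Cref{impforinvariantmeasure2}. The idea is to transport it through the reflection operator: using the identity $r^+_1(z)\star r^+_1(w)=r^+_1(w\star z)$ from \cite{BGS} together with $r^+_1(1)=1$ yields $1\star y=r^+_1\big(r^+_1(y)\star 1\big)$. Substituting this and repeatedly invoking that $r^+_1$ is a trace-preserving, involutive $*$-anti-homomorphism (\Cref{aboutreflectionoperator}), followed by one application of \Cref{impforinvariantmeasure2}, produces the claimed formula. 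I expect this transport of the $1\star y$ leg through $r^+_1$ to be the main obstacle, since it is the only point at which the two coproduct legs behave genuinely differently.

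Once this formula is available, right-invariance is a short verification. Put $g:=E_1(r^+_1(x))$; then $g\in B'\cap A$, so by \Cref{impfordelt(1)otherthings3} the element $(\tr\otimes\mathrm{id})\circ\Delta(x)=r^+_1(g)$ lies in $A'\cap A_1$. Applying $\varepsilon^s$ in the explicit form of \Cref{impforrightinvariantmeasure2} gives $\varepsilon^s(r^+_1(g))=\tau^{-1}r^+_1\big(E_1(e_1\,r^+_1(g))\big)$; now \Cref{impforrightinvariantmeasure3} yields $e_1\,r^+_1(g)=e_1 g$, and $E_1(e_1 g)=\tau g$ then gives $\varepsilon^s(r^+_1(g))=r^+_1(g)$. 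Consequently $(\tr\otimes\varepsilon^s)\circ\Delta=\varepsilon^s\circ(\tr\otimes\mathrm{id})\circ\Delta=(\tr\otimes\mathrm{id})\circ\Delta$, which is right-invariance, while $(\tr\otimes\mathrm{id})\circ\Delta(1)=r^+_1\big(E_1(r^+_1(1))\big)=r^+_1(1)=1$ settles normalization. Thus $\tr$ serves simultaneously as the left- and right-invariant normalized measure on $B'\cap A_1$.
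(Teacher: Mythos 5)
Your proposal is correct and takes essentially the same route as the paper: both use the Markov trace $\phi=\operatorname{tr}$ as the candidate measure and rest on the same ingredients (\Cref{impforinvariantmeasure2}, \Cref{impfor e(xyz)}, \Cref{impforrightinvariantmeasure2}, \Cref{impforrightinvariantmeasure3}, and the anti-multiplicativity of $r_1^+$ with respect to $\star$). The only difference is organizational --- you first identify the slice maps explicitly as $(\mathrm{id}\otimes\operatorname{tr})\circ\Delta=E_1$ and $(\operatorname{tr}\otimes\mathrm{id})\circ\Delta=r_1^+\circ E_1\circ r_1^+$ and then check that $\varepsilon^t$ and $\varepsilon^s$ fix their respective ranges, whereas the paper pairs both sides of each invariance identity against an arbitrary element of $B'\cap A_1$; both computations are the same at bottom.
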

\begin{prf}
Consider the functional $\phi(x)=\tr(x).$ First we will show that $\phi$ is a left-invariant measure, i.e., $(\mathrm{id} \otimes \phi) \circ \Delta = (\varepsilon^t \otimes \phi) \circ \Delta.$ For any $y\in B'\cap A_1$, using \Cref{impforinvariantmeasure2}, we have the following:
\begin{eqnarray}\nonumber
\big<y, x_{(1)}\tr(x_{(2)})\big>&=& \big<x^*_{(1)},y^*\big>\, \big<x^*_{(2)}, 1\big>\\\nonumber
&=& \tau^{\frac{1}{2}}\big< x^*, y^* \star 1\big>\\\nonumber
&=& \tr(y^*E_1(x))\\\label{inmaintheorem31}
&=& \tr(xE_1(y^*)).
\end{eqnarray}
Moreover, using \Cref{impfor e(xyz)} item $(ii)$ together with \Cref{impforinvariantmeasure2}, we have

\begin{eqnarray}\nonumber
\big<y, \varepsilon^t(x_{(1)})\tr(x_{(2)})\big>
&=& \tau^{-1}\big<y, E_1(x_{(1)}e_1)\big>\,\big<x^*_{(2)}, 1\big>\\\nonumber
&=& \tau^{-1}E_0\big(E_1(y^*) E_1(x_{(1)}e_1)\big)\,\big<x^*_{(2)}, 1\big>\\\nonumber
&=& \tau^{-1}E_0\big( E_1(x_{(1)}e_1E_1(y^*))\big)\,\big<x^*_{(2)}, 1\big>\\\nonumber
&=&\tau^{-1}\big<x^*_{(1)}, e_1E_1(y^*)\big>\,\big<x^*_{(2)}, 1\big>\\\nonumber
&=&\tau^{-\frac{1}{2}}\big<x^*, (e_1E_1(y^*))\star 1\big>\\\nonumber
&=&\tau^{-\frac{1}{2}}\,\overline{\big<x, (E_1(y)e_1)\star 1\big>}.
\end{eqnarray}
The last equality follows by \Cref{3Proposition 3.8}. Now using \Cref{impfor e(xyz)} item $(i)$, we get that
\begin{eqnarray}\nonumber
\big<y, \varepsilon^t(x_{(1)})\tr(x_{(2)})\big>=\tau^{-1}\overline{\tr\big(E_1(E_1(y)e_1)x^*\big)}&=&\overline{\tr\big(E_1(y)x^*\big)}\\\label{inmaintheorem32}&=&\tr(xE_1(y^*)).
\end{eqnarray}
Therefore, by \Cref{inmaintheorem31} and \Cref{inmaintheorem32}, we conclude that $\phi$ is a left-invariant measure. Moreover, it is straightforward to verify that $\phi$ is normalized. Now we will show that $\phi$ is also right-invariant measure, i.e.,
$(\phi \otimes \mathrm{id}) \circ \Delta = (\phi \otimes \varepsilon^s) \circ \Delta.$ For any $y\in B'\cap A_1$, we get the following:
\begin{eqnarray}\label{impforrightinvariantmeasur1}
\big<y, \,\tr(x_{(1)})x_{(2)}\big>=\big<x^*_{(1)}, \, 1\big>\,\big<x^*_{(2)}, y^*\big>&=&\tau^{\frac{1}{2}} \big< x^*, \, 1\star y^*\big>.
\end{eqnarray}
Using \Cref{impforrightinvariantmeasure2} and the fact that $r^+_1$ is an involutive, trace preserving antihomomorphism, we deduce that
\begin{eqnarray}\nonumber
\big<y, \tr(x_{(1)})\varepsilon^s(x_{(2)})\big>&=&\tau^{-1}\big<x^*_{(1)}, \, 1\big>\,\tr(y^*r^+_1(E_1(e_1x_{(2)}))\big)\\\nonumber
&=& \tau^{-1}\big<x^*_{(1)}, \, 1\big>\,\tr\big(E_1(e_1x_{(2)})r^+_1(y^*)\big)\\\nonumber
&=& \tau^{-1}\big<x^*_{(1)}, \, 1\big>\, E_0(E_1(e_1x_{(2)})E_1(r^+_1(y^*))\big)\\\nonumber
&=& \tau^{-1}\big<x^*_{(1)}, \, 1\big>\, \big<x^*_{(2)},\,E_1(r^+_1(y^*))e_1\big>\\\nonumber
&=& \tau^{-\frac{1}{2}}\big<x^*, \, 1\star (E_1(r^+_1(y^*))e_1)\big>.
\end{eqnarray}
Now, using \cite[Proposition~3.9]{BGS} together with the fact that \(r_1^+(e_1) = e_1\), we obtain
\begin{eqnarray}\nonumber
\big<y, \tr(x_{(1)})\varepsilon^s(x_{(2)})\big>&=&\tau^{-\frac{1}{2}}\big<r^+_1(x^*),\, \big(e_1r^+_1(E_1(r^+_1(y^*)))\big) \star 1\big>\\\nonumber
&=& \tau^{-\frac{1}{2}}\big<r^+_1(x^*),\, \big(e_1E_1(r^+_1(y^*))\big) \star 1\big>\quad \text{ (by \Cref{impforrightinvariantmeasure3})}\\\nonumber
&=& \tau^{-\frac{1}{2}}\overline{\big<r^+_1(x),\, \big(E_1(r^+_1(y))e_1\big) \star 1\big>}\quad \text{ (by \Cref{3Proposition 3.8})}\\\nonumber
&=& \tau^{-1}\overline{\tr\big(E_1(E_1(r^+_1(y))e_1)r^+_1(x^*)\big)}\quad \text{ (by \Cref{impfor e(xyz)} item $(i)$)}\\\nonumber
&=& \tr\big(r^+_1(x)E_1(r^+_1(y^*)
)\big)\\\nonumber
&=&\tau^{\frac{1}{2}} \big<r^+_1(x^*) , \, r^+_1(y^*) \star 1\big>\quad \text{ (by \Cref{impforinvariantmeasure2})}\\\label{impforrightinvariantmeasur5}
&=&\tau^{\frac{1}{2}} \big<x^* , \, 1 \star y^* \big>.
\end{eqnarray}
Thus, from \Cref{impforrightinvariantmeasur1} and \Cref{impforrightinvariantmeasur5}, we obtain that $\phi$ is also a right-invariant measure. Clearly, $\phi$ is a normalized right-invariant measure.
\qed
\end{prf}
\begin{thm}\label{Haarintegralexistence}
Suppose \(B \subset A\) is an inclusion of simple unital \(C^*\)-algebras with a conditional expectation of index-finite type. Then the weak quantum hypergroup \(B' \cap A_1\), arising from \Cref{weak hypergroup structure2}, admits a Haar integral.
\end{thm}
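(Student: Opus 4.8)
The plan is to exhibit the Jones projection itself as the Haar integral, i.e.\ to show that $h := e_1$ does the job. First I would observe that $e_1 \in B' \cap A_1$, since the Jones projection commutes with $B$, so $h$ is a legitimate element of the weak quantum hypergroup. It then remains to verify the three defining properties: that $h$ is a left integral, that it is a right integral, and that $\varepsilon^t(h) = \varepsilon^s(h) = 1$.

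For the left integral property I must check that $a\,e_1 = \varepsilon^t(a)\,e_1$ for all $a \in B' \cap A_1$. By \Cref{impfor e(xyz)} item $(ii)$ we have $\varepsilon^t(a) = \tau^{-1} E_1(a e_1)$, and the pushdown \Cref{pushdown} applied to $a \in A_1$ gives exactly $a e_1 = \tau^{-1} E_1(a e_1)\, e_1$. Combining the two identities yields $a e_1 = \varepsilon^t(a)\, e_1$, which is the required relation, so $e_1$ is a left integral.

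For the right integral property I must check that $e_1\, a = e_1\, \varepsilon^s(a)$ for all $a$. Using \Cref{impforrightinvariantmeasure2}, $\varepsilon^s(a) = \tau^{-1} r_1^+\big(E_1(e_1 a)\big)$, and since $E_1(e_1 a) \in B' \cap A$ (because $e_1 a$ commutes with $B$ and $E_1$ is a $B$-bimodule map), \Cref{impforrightinvariantmeasure3} gives $e_1\, r_1^+\big(E_1(e_1 a)\big) = e_1\, E_1(e_1 a)$, so that $e_1\,\varepsilon^s(a) = \tau^{-1} e_1 E_1(e_1 a)$. The one genuinely nontrivial step is that I still need the left-handed pushdown identity $e_1\, a = \tau^{-1} e_1\, E_1(e_1 a)$. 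I would obtain it by applying $*$ to the pushdown relation $a^* e_1 = \tau^{-1} E_1(a^* e_1)\, e_1$, using that $E_1$ is $*$-preserving and that $e_1$ is self-adjoint; this is the main obstacle, in the sense that it is the only point where one must dualize \Cref{pushdown} rather than quote it directly.

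Finally, for the normalization I would compute $\varepsilon^t(e_1) = \tau^{-1} E_1(e_1 e_1) = \tau^{-1} E_1(e_1) = 1$, using $e_1^2 = e_1$ and the Markov relation $E_1(e_1) = \tau\, 1$; and likewise $\varepsilon^s(e_1) = \tau^{-1} r_1^+\big(E_1(e_1)\big) = r_1^+(1) = 1$, using the same Markov relation together with the unitality of $r_1^+$ (\Cref{aboutreflectionoperator}). Since $e_1$ is simultaneously a left integral, a right integral, and satisfies $\varepsilon^t(e_1) = \varepsilon^s(e_1) = 1$, it is a Haar integral, which completes the proof.
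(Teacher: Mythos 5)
Your proposal is correct and follows essentially the same route as the paper: the paper's proof is a one-line verification that $e_1$ is the Haar integral, citing exactly the formulas $\varepsilon^t(x)=\tau^{-1}E_1(xe_1)$, $\varepsilon^s(x)=\tau^{-1}r_1^+(E_1(e_1x))$, and $e_1 r_1^+(v)=e_1 v$ that you use. Your write-up merely makes explicit the pushdown identity of \Cref{pushdown} (and its adjoint) together with the Markov relation $E_1(e_1)=\tau$, which the paper leaves as ``straightforward.''
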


\begin{prf}
Using \Cref{impfor e(xyz)} item $(ii)$, \Cref{impforrightinvariantmeasure2}, and \Cref{impforrightinvariantmeasure3}, it follows straightforwardly that $e_1$ is a Haar integral.
\qed
\end{prf}

\section*{Acknowledgements}
The first author acknowledges the support of the INSPIRE Faculty Grant DST/INSPIRE/04/2019/002754 and  ANRF/ECRG/2024/002328/PMS. The second author is partially supported by the
JC Bose National Fellowship awarded by ANRF, Government of India.

	\bigskip

\noindent \\
         {\em Department of Mathematics and Statistics,
         Indian Institute of Technology Kanpur,
         Uttar Pradesh 208016, India.}\\
        { Email adress: \texttt{keshab@iitk.ac.in, bakshi209@gmail.com
}}

\noindent\\
          {\em Stat-Math Unit, Indian Statistical Institute, 203,         B.T. Road, Kolkata-700108, India.}\\
{Email address: \texttt{debashish\_goswami@yahoo.co.in}}

\noindent \\
         {\em Department of Mathematics and Statistics,
         Indian Institute of Technology Kanpur,
         Uttar Pradesh 208016, India.}\\
        { Email adress: \texttt{biplabpal32@gmail.com, bpal21@iitk.ac.in}}

\bigskip

\end{document}